\documentclass{amsart}
\usepackage{amsmath,amsthm,amssymb,bm}
\usepackage{xcolor}
\usepackage{graphicx}
\usepackage{mathrsfs,esint}
\usepackage{comment}
\usepackage{enumerate}
\usepackage{hyperref}    
\hypersetup{
     colorlinks = true,
     linkcolor = black,
     anchorcolor =green!50!black,
     citecolor = green!50!black,
     filecolor =green!50!black,
     urlcolor = green!50!black
}
\usepackage{cleveref}

\newtheorem{thm}{Theorem}[section]
\newtheorem{prop}{Proposition}[section]
\newtheorem{coro}{Corollary}[section]
\newtheorem{lem}{Lemma}[section]

\newtheorem{defn}{Definition}[section]
\newtheorem{assu}{Assumption}[section]

\newtheorem{rem}{Remark}[section]

\newcommand{\beq}{\begin{equation}}
\newcommand{\eeq}{\end{equation}}

\newcommand{\dist}{\text{dist}}

\newcommand{\supp}{\text{supp}}


\def\cS{\mathcal{S}}


\def\C{\mathbb{C}}

\def\N{\mathbb{N}}

\def\R{\mathbb{R}}
\def\S{\mathbb{S}}

\def\Z{\mathbb{Z}}



\def\ab{\bm a}
\def\bb{\bm b}
\def\cb{\bm c}

\def\eb{\bm e}
\def\fb{\bm f}

\def\nb{\bm n}

\def\ub{\bm u}
\def\vb{\bm v}
\def\wb{\bm w}
\def\xb{\bm x}
\def\yb{\bm y}
\def\zb{\bm z}

\def\nub{\bm\nu}
\def\xib{\bm\xi}
\def\phib{\bm\phi}
\def\etab{\bm\eta}


\newcommand{\al}{\alpha}

\newcommand{\del}{\delta}
\newcommand{\ep}{\epsilon}

\newcommand{\Om}{\Omega}

\title[Nonlocal half-ball vector operators on bounded domains]{Nonlocal half-ball vector operators on bounded domains: Poincar\'e inequality and its applications}
\author{Zhaolong Han}
\address{Department of Mathematics, University of California, San Diego, CA 92093, United States} 
\email{zhhan@ucsd.edu}
\author{Xiaochuan Tian}
\address{Department of Mathematics, University of California, San Diego, CA 92093, United States} 
\email{xctian@ucsd.edu}

\date{}
\begin{document}

\begin{abstract}
This work contributes to nonlocal vector calculus as an indispensable mathematical tool for 
the study of nonlocal models that arises in a variety of applications. 
We define the nonlocal half-ball gradient, divergence and curl operators with general kernel functions (integrable or fractional type with finite or infinite supports) and study the associated nonlocal vector identities. 
We study the nonlocal function space on bounded domains associated with zero Dirichlet boundary conditions and the half-ball gradient operator and show it is a separable Hilbert space with smooth functions dense in it. 
A major result is the nonlocal Poincar\'e inequality, based on which a few applications are discussed, and these include applications to nonlocal convection-diffusion, nonlocal correspondence model of linear elasticity, and nonlocal Helmholtz decomposition on bounded domains. 
\end{abstract}

\subjclass[2020]{45A05, 26B12, 47G10, 47G30, 35A23, 46E35, 74G65}

\keywords{nonlocal models, nonlocal vector calculus, Poincar\'e inequality, Helmholtz decomposition, bounded domains, nonlocal half-ball gradient operator, Riesz fractional gradient, Marchaud fractional derivative, peridynamics, correspondence model}

\maketitle

\tableofcontents

\section{Introduction}
In recent decades, nonlocal models that account for interactions occurring at a distance have been increasingly popular in many scientific fields.
In particular, they appear widely in applications in continuum mechanics, probability and finance, image processing and population
dynamics, and have been shown to more faithfully and effectively model observed phenomena that involve
possible discontinuities, singularities and other anomalies \cite{askari2008peridynamics,benson2000application,buades2010image,fuentes2003nonlocal,HaBo11,klafter2005anomalous,scalas2000fractional}.

One type of nonlocal problem is featured with generalizing the integer-order scaling laws that appear in PDEs to scaling laws of non-integer orders.
This type of problem usually involves integral operators with fractional kernels that are supported in the whole space (i.e., with infinite nonlocal interactions), such as the fractional Laplace operator that models non-standard diffusion of a fractional order \cite{bakunin2008turbulence,kassmann2017intrinsic,meerschaert2006fractional}.   
Another type of nonlocal problem focuses on finite range interactions and connects to PDEs by localization of nonlocal interactions \cite{Du2019,DGLZ12}.
A prominent example is peridynamics, a nonlocal continuum model in solid mechanics, which is shown to be consistent with the classical elasticity theory by localization \cite{MeDu14b,Silling00,SEWX07}. 
Other nonlocal models in this type include nonlocal (convection-)diffusion and nonlocal Stokes equations with finite nonlocal interactions that are inspired by peridynamics.  
Nonlocal vector calculus is developed in \cite{DGLZ13a} and is used for reformulating nonlocal problems under a more systematic framework analogous to classical vector calculus \cite{DGLZ12,gunzburger2010nonlocal}. 
See \cite{d2021towards,du2022nonlocal} for surveys on connecting the fractional and nonlocal vector calculus.

There are two commonly used frameworks in nonlocal vector calculus \cite{DGLZ13a}; one involves two-point nonlocal (difference) operators and another involves one-point nonlocal (integral) operators.
The two-point nonlocal gradient operator and its adjoint operator are used to reformulate nonlocal diffusion and the bond-based peridynamics models \cite{Du2019}.
The one-point nonlocal gradient operator, on the other hand, is also used in a variety of applications including nonlocal advection equation, nonlocal Stokes equation and the peridynamics correspondence models \cite{du2018stability,DuTi20,lee2019asymptotically,silling2017stability}. 
In some sense, the one-point nonlocal operators, including nonlocal gradient, divergence and curl operators, are more convenient to use as modeling tools since they can be directly used in  place of their classical counterparts appearing in PDEs. 
However, the mathematical properties of nonlocal models involving these operators 
are not readily guaranteed without careful investigation.
For example, instability of the peridynamics correspondence model is observed in which nonlocal deformation gradient is used to replace the classical deformation gradient, and is later explained in \cite{du2018stability} as a result of lack of conscious choice of the interaction kernels in the nonlocal gradient operators.  
Singular kernels are proposed in \cite{du2018stability} for the remedy of instability which resembles the kernel functions in the Riesz fractional gradient in terms of singularity at origin \cite{bellido2023nonlocal,shieh2015new}. 
Later on, nonlocal gradient operators with hemispherical interaction neighborhoods are used in \cite{lee2020nonlocal} so that the singularity in kernel functions is no longer a necessity for the corresponding nonlocal Dirichlet energies to be stable.  
Both \cite{du2018stability} and \cite{lee2020nonlocal} work on functions defined on periodic cells to facilitate Fourier analysis. 
The starting point of this work is to establish a functional analysis framework that extends the Fourier analysis in \cite{lee2020nonlocal} and apply it to
nonlocal Dirichlet boundary value problems. 
With a general setting, we work with kernels that include both the Riesz fractional type (with infinite support) and the compactly supported type inspired by peridynamics.
We remark that in a recent work \cite{bellido2023nonlocal}, the authors consider the truncated Riesz fractional type kernels defined with full spherical support
and study the properties of the corresponding function spaces by establishing a nonlocal fundamental theorem of calculus, and no such formula exists for kernels with hemispherical interaction neighborhoods which are our main focus in this work.

The major contribution of this work is the study of the functional analysis properties of the nonlocal space associated with 
the half-ball nonlocal gradient operator defined on bounded domains. 
We show that the space is a Hilbert space, and more importantly, it is separable with smooth functions dense in it, a property on which many applications are based. 
Another major result is the Poincar\'e inequality on functions with zero Dirichlet boundary conditions.
We spend two whole sections on the proof it, one for the case of integrable kernels with compact support and another for more general kernels, including non-integrable kernels and kernels with infinite supports.
Poincar\'e inequality is crucial for the study of boundary value problems. 
Indeed, we illustrate its use in three applications. 
The first application is the well-posedness of a class of nonlocal convection-diffusion equations defined via nonlocal half-ball gradient and divergence.
Secondly, we study the nonlocal correspondence model of linear elasticity, where we also show a nonlocal Korn's inequality for functions with Dirichlet boundary conditions.
Note that the convergence of Galerkin approximations to these equations is natural, although we do not illustrate it in detail due to the length of the paper, 
as a result of the separability of the associated nonlocal energy spaces.  
The last application is a nonlocal version of Helmholtz decomposition for vector fields defined on bounded domains, 
a result of the solvability of the nonlocal Poisson type problem and some nonlocal vector identities involving gradient, divergence and curl 
which we also established in this paper.   
We remark that Helmholtz decomposition for one-point nonlocal operators is also studied in \cite{d2022connections,haar2022new,lee2020nonlocal},
but only periodic domains or the whole space are considered in these works.

{\it Outline of the paper}.
We start with the principal value definition of the nonlocal half-ball gradient, divergence and curl operators for measurable functions in \Cref{sec:operators}, 
and the corresponding distributional gradient, divergence and curl operators are followed.   
Fourier symbols of these operators are studied for later use and some nonlocal vector identities for smooth functions are established in the section. 
In \Cref{sec:space}, we define the nonlocal function space associated with the Dirichlet integral defined via the distributional nonlocal half-ball gradient, 
an analogue of the $H^1_0$ Sobolev space in the local case, and show it is a separable Hilbert space. 
Ingredients such as closedness under multiplication with smooth functions, continuity of translation and mollification are established to prove the density result.
In addition, we show that the distributional divergence and curl are well-defined quantities in the $L^2$ sense in the nonlocal function space for vector fields,
an analogue of the fact that $H^1\subset H(\text{div})$  and $H^1\subset H(\text{curl})$ in the local case. Thus the vector identities also hold for functions in the nonlocal function spaces. 
The nonlocal Poincar\'e inequality is proved for integrable kernels with compact support in \Cref{sec:Poincare_int_kernel}, based on which the nonlocal Poincar\'e inequality is shown for more general kernels in \Cref{sec:Poincare_nonint_kernel}.
\Cref{sec:applications} contains three applications of our functional analysis framework, including applications to nonlocal convection-diffusion, nonlocal linear elasticity, and nonlocal Helmholtz decomposition on bounded domains. 
Finally, we conclude in \Cref{sec:conclusion}.

\section{Nonlocal half-ball vector operators}
\label{sec:operators}
We introduce the nonlocal half-ball vector operators in this section and discuss their properties. 
In the following, we let $\nub\in\R^d$ be a fixed unit vector. Denote by $\chi_{\nub}(\zb)$ the characteristic function of the half-space $\mathcal{H}_{\nub}:=\{\zb\in \mathbb{R}^d:\zb\cdot\nub\geq 0\}$ parameterized by the unit vector $\nub$.

Throughout the paper, we adopt the following notations in linear algebra. For two column vectors $\ab,\bb\in\R^d\cong \R^{d\times 1}$, $\ab\cdot\bb$ is the dot product and $\ab\times\bb$ is the cross product if $d=3$. If $\ab\in\R^d$ and $\bb\in \R^N$, then the tensor product of $\ab$ and $\bb$ is a $d\times N$ matrix, given by $\ab\otimes\bb=(a_i b_j)_{1\le i\le d,1\le j\le N}$. For two matrices $A,B\in\mathcal{M}_{m,n}(\R)$, we define $A:B=\sum_{i=1}^m \sum_{j=1}^n a_{ij}b_{ij}$.

\subsection{Definitions and integration by parts}
Following the notion of nonlocal nonsymmetric operator defined in \cite{lee2020nonlocal}, we define the nonlocal half-ball vector operators as follows.

Throughout the paper, we assume that $w$ satisfies the following conditions:
\beq
\label{eq:kernelassumption}
\left\{
\begin{aligned}
& w\in L^1_{\mathrm{loc}}(\R^d\backslash \{\bm 0\}),\ w\geq 0, \ w\ \text{is radial}; \\
&\text{there exists}\ \epsilon_0\in (0,1) \text{ such that } w(\xb)>0 \text{ for } 0<|\xb|\leq \ep_0; \\
&   \int_{\R^d}\min(1,|\xb|)w(\xb)d\xb=\int_{|\xb|\le 1}w(\xb)|\xb|d\xb+\int_{|\xb|>1}w(\xb)d\xb=:M_w^1+M_w^2<\infty.
\end{aligned}
\right.
\eeq

\begin{rem}
There are two typical types of kernels used in the literature that satisfy \cref{eq:kernelassumption}. One type of kernel is those with compact supports, e.g., $\supp\ w \subset B_\del(\bm{0})$ for some $\del>0$, where $\delta$ represents the finite length of nonlocal interactions.
Compactly supported kernels are used in peridynamics and the related studies, see e.g., \cite{bellido2023nonlocal,Du2019,lee2020nonlocal,silling2017stability}.  Another type of kernel has non-compact supports, e.g., $w(\xb) =C |\xb|^{-d-\alpha}$ for $\alpha\in (0,1)$, which relates to the Riesz fractional derivatives studied in \cite{bellido2023nonlocal,shieh2015new,shieh2018new}. 
For $d=1$ with $w(\xb) =C |\xb|^{-1-\alpha}$, the nonlocal half-ball operators in this work directly relate to the Marchaud one-sided derivatives studied in \cite{allen2016parabolic,kilbas1993fractional,stinga2020one}.
Tempered fractional operators are discussed in \cite{d2021towards,sabzikar2015tempered} where $w(\xb) =C e^{-\lambda |\xb|}|\xb|^{-d-\alpha}$ for $\lambda>0$  and $\alpha\in (0,1)$.
\end{rem}

\begin{defn}
\label{def:operators_PV}
Given a measurable vector-valued function $\ub:\R^d\to \R^N$, the action of \textbf{nonlocal half-ball gradient operator} $\mathcal{G}^{\nub}_w$ on $\ub$ is defined as
 \begin{equation}\label{hbgrad}
    \mathcal{G}^{\nub}_w \ub(\xb):=\lim_{\epsilon\to 0}\int_{\R^d\backslash B_\epsilon(\xb)}\chi_{\nub}(\yb-\xb)\frac{\yb-\xb}{|\yb-\xb|}\otimes(\ub(\yb)-\ub(\xb))w(\yb-\xb)d\yb,\quad \xb\in \R^d,
\end{equation}
 where $\mathcal{G}^{\nub}_w \ub:\R^d\to\R^{d\times N}$. Given a measurable matrix-valued function $\vb:\R^d\to\R^{d\times N}$, the action of \textbf{nonlocal half-ball divergence operator} $\mathcal{D}^{\nub}_w$ on $\vb$ is defined as
 \begin{equation}\label{hbdiv}
    \mathcal{D}_w^{\nub} \vb(\xb):=\lim_{\epsilon\to 0}\int_{\R^d\backslash B_\epsilon(\xb)}\chi_{\nub}(\yb-\xb)\left[\frac{\yb^T-\xb^T}{|\yb-\xb|}(\vb(\yb)-\vb(\xb))\right]^T w(\yb-\xb)d\yb,\quad\xb\in \R^d,
\end{equation}
 where $\mathcal{D}^{\nub}_w \vb:\R^d\to\R^{N}$. If $d=3$ and $\vb:\R^3\to\R^3$, then the action of \textbf{nonlocal half-ball curl operator} $\mathcal{C}^{\nub}_w$ on $\vb$ is defined as
 \begin{equation}\label{hbcurl}
     \mathcal{C}_w^{\nub} \vb(\xb):=\lim_{\epsilon\to 0}\int_{\R^3\backslash B_\epsilon(\xb)}\chi_{\nub}(\yb-\xb)\frac{\yb-\xb}{|\yb-\xb|}\times(\vb(\yb)-\vb(\xb))w(\yb-\xb)d\yb,\quad \xb\in \R^3,
 \end{equation}
  where $\mathcal{C}^{\nub}_w \vb:\R^3\to\R^3$.
\end{defn}

\begin{rem}
    Suppose $\mathrm{supp}\ w\subset \overline{B_1(\bm 0)}$. For the affine function $\ub(\xb)=A\xb+\bb$ where $A\in \R^{N\times d}$ and $\bb\in \R^{N}$, it follows that \[
    \begin{split}
        \mathcal{G}^{\nub}_w \ub(\xb)&=\int_{B_1(\bm 0)}\chi_{\nub}(\zb)\frac{\zb}{|\zb|}\otimes(A\zb)w(\zb)d\zb\\
        &=\left(\int_{B_1(\bm 0)}\chi_{\nub}(\zb)|\zb|w(\zb)\frac{\zb}{|\zb|}\otimes \frac{\zb}{|\zb|} d\zb \right)A^T\\
        &=\frac{M_w^1}{2d}A^T,\quad\forall\xb\in\R^d,
    \end{split}
    \]
    where we used\[
    \begin{split}
        &\int_{B_1(\bm 0)}\chi_{\nub}(\zb)|\zb|w(\zb)\frac{\zb}{|\zb|}\otimes \frac{\zb}{|\zb|} d\zb\\ 
        =&\int_{B_1(\bm 0)}\chi_{-\nub}(\zb)|\zb|w(\zb)\frac{\zb}{|\zb|}\otimes \frac{\zb}{|\zb|} d\zb\qquad(\text{change of variable}\ \zb'=-\zb)\\
        =&\frac{1}{2}\int_{B_1(\bm 0)}\left(\chi_{\nub}(\zb)+\chi_{-\nub}(\zb)\right)|\zb|w(\zb)\frac{\zb}{|\zb|}\otimes \frac{\zb}{|\zb|} d\zb\\
        =&\frac{1}{2}\int_{B_1(\bm 0)}|\zb|w(\zb)\frac{\zb}{|\zb|}\otimes \frac{\zb}{|\zb|} d\zb\qquad( \chi_{\nub}(\zb)+\chi_{-\nub}(\zb)=1)\\
        =&\frac{1}{2}\int_{0}^1 \int_{\S^{d-1}}r^d w(r)\etab\otimes \etab d\etab dr\\
        =&\frac{1}{2}\left(\int_{0}^1 r^d w(r)dr\right)\frac{1}{d}\omega_{d-1}I_d\\
        =&\frac{M_w^1}{2d}I_d.
    \end{split}
    \]
    Here $\omega_{d-1}$ is the surface area of $(d-1)$-sphere $\S^{d-1}$ and $I_d$ is the $d\times d$ identity matrix.

    One may further show that the localizations of these nonlocal operators are their local counterparts multiplied by a constant $\frac{M_w^1}{2d}$, which justifies this definition. Specially, let $w_\del(\xb)=\frac{1}{\del^{d+1}}w(\frac{\xb}{\del})$ and $u\in C^2_c(\R^d)$, then by Taylor expansion one can prove that \[
    \mathcal{G}^{\nub}_{w_\del} u(\xb)\to \frac{M_w^1}{2d}\nabla u(\xb),\quad \del\to 0,\quad \forall \xb\in\R^d,
    \]
    where $\nabla u(\xb)=\left(\frac{\partial u_j}{\partial x_i}(\xb)\right)_{1\le i\le d,\, 1\le j\le N}$ is the gradient matrix of $u$, i.e., the transpose of the Jacobian matrix of $u$. Similarly, for $\vb\in C^2_c(\R^d;\R^N)$, \[
    \mathcal{D}^{\nub}_{w_\del}\vb(\xb)\to \frac{M_w^1}{2d}\nabla\cdot\vb(\xb),\quad \del\to 0,\quad \forall \xb\in\R^d,
    \]where the divergence vector of $\vb$ given by $\nabla\cdot \vb(\xb)=\left(\sum_{j=1}^d \frac{\partial v_{ji}}{\partial x_j}\right)_{1\le i\le N}$ is a column vector in $\R^N$,
    and if $d=N=3$,
    \[
    \mathcal{C}^{\nub}_{w_\del}\vb(\xb)\to \frac{M_w^1}{2d}\mathrm{curl}\, \vb(\xb),\quad \del\to 0,\quad \forall \xb\in\R^3.
    \]
\end{rem}

Note that in \Cref{def:operators_PV}, the integrals are understood in the principal value sense. For smooth functions with compact support, the above integrals are just Lebesgue integrals, and moreover, the action of nonlocal operators yields smooth functions whose derivatives are $L^p$ functions for $1\le p\le \infty$.
We summarize these results in the following lemma. 
The proof is similar to that of Proposition 1 in \cite{d2022connections} and hence omitted.  

\begin{lem}\label{lem:opforsmoothfct}
Suppose that $u\in C^\infty_c(\R^d)$ and $\vb\in C^\infty_c(\R^d;\R^d)$. Then $\mathcal{G}^{\nub}_w u$, $\mathcal{D}_w^{\nub} \vb$ and $\mathcal{C}_w^{\nub} \vb$ ($d=3$) are $C^\infty$ functions with
\begin{equation}\label{Gwithoutpv}
    \mathcal{G}^{\nub}_w u(\xb)=\int_{\R^d}\chi_{\nub}(\yb-\xb)\frac{\yb-\xb}{|\yb-\xb|}(u(\yb)-u(\xb))w(\yb-\xb)d\yb,\quad \xb\in \R^d,
\end{equation}
\begin{equation}\label{Dwithoutpv}
    \mathcal{D}_w^{\nub} \vb(\xb)=\int_{\R^d}\chi_{\nub}(\yb-\xb)\frac{\yb-\xb}{|\yb-\xb|}\cdot (\vb(\yb)-\vb(\xb))w(\yb-\xb)d\yb,\quad\xb\in \R^d,
\end{equation}
and if $d=3$,
\begin{equation}\label{Cwithoutpv}
    \mathcal{C}_w^{\nub} \vb(\xb)=\int_{\R^3}\chi_{\nub}(\yb-\xb)\frac{\yb-\xb}{|\yb-\xb|}\times(\vb(\yb)-\vb(\xb))w(\yb-\xb)d\yb,\quad \xb\in \R^3.
\end{equation}
For $p\in [1,\infty]$ and multi-index $\alpha\in \N^d$, there is a constant $C$ depending on $p$ such that the following estimates hold:

\begin{equation}\label{Lpestgradsmooth}
    \|D^\alpha\mathcal{G}_w^{\nub} u\|_{L^p(\R^d;\R^d)}\le C\left( M_w^1\|\nabla D^\alpha u\|_{L^p(\R^d;\R^d)}+M_w^2\|D^\alpha u\|_{L^p(\R^d)}\right),
\end{equation}
\begin{equation}\label{Lpestdivsmooth}
    \|D^\alpha\mathcal{D}_w^{\nub} \vb\|_{L^p(\R^d)} \le C\left( M_w^1\|\nabla D^\alpha \vb\|_{L^p(\R^d;\R^{d\times d})}+M_w^2\|D^\alpha\vb\|_{L^p(\R^d;\R^d)}\right),
\end{equation}
and if $d=3$,
\begin{equation}\label{Lpestcurlsmooth}
    \|D^\alpha\mathcal{C}_w^{\nub} \vb\|_{L^p(\R^3;\R^3)}\le C\left( M_w^1\|\nabla D^\alpha\vb\|_{L^p(\R^3;\R^{3\times 3})}+M_w^2\|D^\alpha\vb\|_{L^p(\R^3;\R^3)}\right).
\end{equation}
\end{lem}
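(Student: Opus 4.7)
The plan is to reduce everything to the translated form, via the change of variable $\zb=\yb-\xb$:
\[
\mathcal{G}^{\nub}_w u(\xb) = \lim_{\epsilon\to 0}\int_{\R^d\setminus B_\epsilon(\bm 0)}\chi_{\nub}(\zb)\frac{\zb}{|\zb|}\bigl(u(\xb+\zb)-u(\xb)\bigr)w(\zb)\,d\zb,
\]
and similarly for $\mathcal{D}^{\nub}_w\vb$ and $\mathcal{C}^{\nub}_w\vb$. For $u\in C^\infty_c(\R^d)$, the mean value theorem gives $|u(\xb+\zb)-u(\xb)|\le \|\nabla u\|_\infty |\zb|$ when $|\zb|\le 1$, while for $|\zb|>1$ the crude bound $|u(\xb+\zb)-u(\xb)|\le 2\|u\|_\infty$ holds. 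The kernel assumption $\int \min(1,|\zb|)w(\zb)d\zb = M_w^1 + M_w^2 <\infty$ then produces a $\xb$-independent integrable dominant $C\min(1,|\zb|)w(\zb)$, so dominated convergence removes the principal value and yields the formulas \cref{Gwithoutpv}. The same dominant, now applied to each $D^\alpha u$, legitimizes differentiation under the integral, giving the commutation identity $D^\alpha \mathcal{G}^{\nub}_w u = \mathcal{G}^{\nub}_w D^\alpha u$; hence each derivative exists, is continuous in $\xb$, and the result is $C^\infty$.

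For the $L^p$ estimates, it suffices to treat $\alpha=0$ and then apply the commutation. Split the integral into the near-field $\{|\zb|\le 1\}$ and far-field $\{|\zb|>1\}$. On the near-field, write
\[
u(\xb+\zb)-u(\xb) = \int_0^1 \nabla u(\xb+t\zb)\cdot \zb\,dt,
\]
so the integrand is bounded pointwise by $|\zb|w(\zb)\int_0^1 |\nabla u(\xb+t\zb)|dt$. Applying Minkowski's integral inequality (first in $t$, then in $\zb$) together with the translation invariance of $\|\nabla u\|_{L^p}$ gives a bound of the form $M_w^1 \|\nabla u\|_{L^p}$. On the far-field, the triangle inequality $|u(\xb+\zb)-u(\xb)|\le |u(\xb+\zb)|+|u(\xb)|$ and Minkowski again give $2M_w^2\|u\|_{L^p}$. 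Summing yields \cref{Lpestgradsmooth} for $\alpha=0$, and the commutation extends it to all $\alpha$.

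The divergence and curl cases are carried out by the identical scheme, because $\bigl|\tfrac{\zb}{|\zb|}\cdot(\vb(\yb)-\vb(\xb))\bigr|$ and $\bigl|\tfrac{\zb}{|\zb|}\times(\vb(\yb)-\vb(\xb))\bigr|$ both admit the pointwise bound $|\vb(\yb)-\vb(\xb)|$ and thus enjoy the same mean value / triangle estimates componentwise. The only mild subtlety is that the Jacobian/gradient norm of a vector field now appears as $\|\nabla\vb\|_{L^p(\R^d;\R^{d\times d})}$, but this is exactly what the statement records.

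The main obstacle is the first step: supplying a uniform-in-$\xb$ integrable dominant so that both the removal of the principal value and the differentiation under the integral sign are justified. Once the assumption $\int\min(1,|\zb|)w(\zb)d\zb<\infty$ is used precisely to match the two regimes where $u(\xb+\zb)-u(\xb)$ is controlled by $|\zb|$ versus by a constant, everything else reduces to routine applications of Minkowski's integral inequality and the translation invariance of $L^p$ norms.
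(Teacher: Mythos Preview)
Your proof is correct and follows the standard approach the paper has in mind: the paper omits the proof of this lemma entirely, referring instead to Proposition~1 of \cite{d2022connections}, and the closely related \Cref{lem:opforW1pfct} in the appendix is proved by exactly the near-field/far-field split with the dominant $\min(1,|\zb|)w(\zb)$ that you use. The only cosmetic difference is that the appendix argument phrases the $L^p$ bound via H\"older's inequality on the kernel rather than Minkowski's integral inequality, but these are equivalent here.
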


If we replace smooth functions with compact support by $W^{1,p}$ functions, then the action of nonlocal operators still yield $L^p$ functions and the equalities \eqref{Gwithoutpv}-\eqref{Cwithoutpv} hold for a.e. $x\in\R^d$. The proof uses some ideas of Proposition 2.1(2) in \cite{mengesha2016characterization} and is left to the appendix.
\begin{lem}\label{lem:opforW1pfct}
    Let $p\in [1,\infty]$. Then $\mathcal{G}^{\nub}_w:W^{1,p}(\R^d)\to L^p(\R^d;\R^d)$, $\mathcal{D}^{\nub}_w:W^{1,p}(\R^d;\R^d)\to L^p(\R^d)$ and $\mathcal{C}^{\nub}_w:W^{1,p}(\R^3;\R^3)\to L^p(\R^3;\R^3)$ are bounded linear operators. Moreover, there exists a constant $C>0$ depending on $p$ such that
    \begin{equation}\label{LpestgradW1p}
    \|\mathcal{G}_w^{\nub} u\|_{L^p(\R^d;\R^d)}\le C\left( M_w^1\|\nabla  u\|_{L^p(\R^d;\R^d)}+M_w^2\| u\|_{L^p(\R^d)}\right),\quad u\in W^{1,p}(\R^d),
\end{equation}
\begin{equation}\label{LpestdivW1p}
    \|\mathcal{D}_w^{\nub} \vb\|_{L^p(\R^d)} \le C\left( M_w^1\|\nabla  \vb\|_{L^p(\R^d;\R^{d\times d})}+M_w^2\|\vb\|_{L^p(\R^d;\R^d)}\right),\quad \vb\in W^{1,p}(\R^d;\R^d),
\end{equation}
and if $d=3$,
\begin{equation}\label{LpestcurlW1p}
    \|\mathcal{C}_w^{\nub} \vb\|_{L^p(\R^3;\R^3)}\le C\left( M_w^1\|\nabla \vb\|_{L^p(\R^3;\R^{3\times 3})}+M_w^2\|\vb\|_{L^p(\R^3;\R^3)}\right), \quad \vb\in W^{1,p}(\R^3;\R^3).
\end{equation}
In addition, equalities \eqref{Gwithoutpv}-\eqref{Cwithoutpv} hold for a.e. $x\in\R^d$.
\end{lem}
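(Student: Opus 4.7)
The plan is to establish both the $L^p$ bounds and the pointwise a.e. formulas by a near-field/far-field decomposition, combined with the absolute-continuity-on-lines characterization of Sobolev functions. Write $\mathcal{G}^{\nub}_w u(\xb)=I_1(\xb)+I_2(\xb)$, where $I_1$ integrates the kernel over $\{|\yb-\xb|\le 1\}$ and $I_2$ over $\{|\yb-\xb|>1\}$, and decompose $\mathcal{D}^{\nub}_w\vb,\mathcal{C}^{\nub}_w\vb$ analogously. For the far field, the elementary bound $|u(\yb)-u(\xb)|\le |u(\yb)|+|u(\xb)|$ combined with Young's convolution inequality against the $L^1$ weight $w\,\mathbf{1}_{|\cdot|>1}$ yields $\|I_2\|_{L^p}\le 2M_w^2\|u\|_{L^p}$ uniformly for $p\in[1,\infty]$.

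For the near field, I would invoke the ACL representative of $W^{1,p}$ to write the fundamental theorem of calculus along segments,
$$u(\yb)-u(\xb)=\int_0^1\nabla u(\xb+t(\yb-\xb))\cdot(\yb-\xb)\,dt\quad\text{for a.e. } (\xb,\yb),$$
which after setting $\zb=\yb-\xb$ and taking absolute values gives
$$|I_1(\xb)|\le\int_{|\zb|\le 1}|\zb|\,w(\zb)\int_0^1|\nabla u(\xb+t\zb)|\,dt\,d\zb.$$
Two applications of Minkowski's integral inequality (in $\zb$ and in $t$), together with the translation invariance of the $L^p$ norm under $\xb\mapsto\xb+t\zb$, produce $\|I_1\|_{L^p}\le M_w^1\|\nabla u\|_{L^p}$. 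Combining the two parts gives \eqref{LpestgradW1p}, and the same argument with the tensor product replaced by $\cdot$ or $\times$ — using $|\bb\cdot\cb|,|\bb\times\cb|\le|\bb||\cb|$ — yields \eqref{LpestdivW1p} and \eqref{LpestcurlW1p}. Boundedness and linearity of the three operators follow at once.

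It remains to identify the principal-value definitions with the non-truncated integrals in \eqref{Gwithoutpv}--\eqref{Cwithoutpv}. The pointwise bound above, together with Tonelli applied to the global $L^p$ estimate, shows that for almost every fixed $\xb$ the integrand in \eqref{Gwithoutpv} is in $L^1_{\yb}(\R^d)$; hence the Lebesgue integral is absolutely convergent, and dominated convergence lets the truncation radius $\epsilon\to 0$, identifying the principal value with the Lebesgue integral a.e. The same reasoning handles $\mathcal{D}^{\nub}_w\vb$ and $\mathcal{C}^{\nub}_w\vb$. The main technical hurdle is the segment-wise FTC for $W^{1,p}$ functions holding along \emph{almost every} translation $\zb$, which is precisely a Fubini argument on the ACL characterization; once this is in hand the rest is Young/Minkowski bookkeeping, and no density argument is needed, so the case $p=\infty$ is covered uniformly.
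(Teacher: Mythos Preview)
Your argument is correct and matches the paper's proof in all essentials: the same near-field/far-field split at radius $1$, the same triangle-inequality/Young estimate for the tail giving $2M_w^2\|u\|_{L^p}$, and the same dominated-convergence passage from the principal value to the Lebesgue integral once the integrand is shown to lie in $L^1_{\yb}$ for a.e.\ $\xb$. The only cosmetic difference is that for the near field the paper applies H\"older against the finite measure $|\zb|w(\zb)\,d\zb$ and then invokes the ready-made difference-quotient bound $\|\,|\zb|^{-1}(u(\cdot+\zb)-u)\|_{L^p}\le\|\nabla u\|_{L^p}$ (Brezis, Prop.~9.3), whereas you unpack that bound explicitly via the segment FTC and Minkowski; since the Brezis estimate is itself proved by exactly that FTC/Minkowski computation, the two routes are the same argument at different levels of abstraction.
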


Analogous to the local operator, the integration by parts formula holds. Here we provide three types of integration by parts with proofs in the appendix. Note that the corresponding conditions in 
\Cref{prop:hbibp} (1)(2)(3) hold provided $\ub\in C^\infty_c(\R^d;\R^N)$, $\ub\in C^\infty_c(\R^d;\R^{d\times N})$ and $\ub\in C^\infty_c(\R^3;\R^3)$, respectively.

\begin{prop}[Nonlocal ``half-ball" integration by parts]\label{prop:hbibp}\hfill
\begin{enumerate}
    \item  Suppose $\ub\in L^1(\R^d;\R^N)$, and $w(\xb-\yb)\left|\ub(\xb)-\ub(\yb)\right|\in L^1(\R^d\times \R^d)$. Then $\mathcal{G}^{\nub}_w \ub\in L^1(\R^d;\R^{d\times N})$ and for any $\vb\in C_c^1(\R^d;\R^{d\times N})$,
\begin{equation}
    \int_{\R^d} \mathcal{G}^{\nub}_w \ub(\xb):\vb(\xb)d\xb=-\int_{\R^d} \ub(\xb)\cdot\mathcal{D}^{-\nub}_w \vb(\xb)d\xb.
\end{equation}
    \item Suppose $\ub\in L^1(\R^d;\R^{d\times N})$, and $w(\xb-\yb)\left|\ub(\xb)-\ub(\yb)\right|\in L^1(\R^d\times \R^d)$. Then $\mathcal{D}^{\nub}_w \ub\in L^1(\R^d;\R^N)$ and for any $\vb\in C_c^1(\R^d;\R^N)$,
\begin{equation}
    \int_{\R^d} \mathcal{D}^{\nub}_w \ub(\xb)\cdot\vb(\xb)d\xb=-\int_{\R^d} \ub(\xb):\mathcal{G}^{-\nub}_w \vb(\xb)d\xb.
\end{equation}
    \item  Let $d=3$. Suppose $\ub\in L^1(\R^3;\R^3)$, and $w(\xb-\yb)\left|\ub(\xb)-\ub(\yb)\right|\in L^1(\R^3\times \R^3)$. Then $\mathcal{C}^{\nub}_w \ub\in L^1(\R^3;\R^3)$ and for any $\vb\in C_c^1(\R^3;\R^3)$,
\begin{equation}
    \int_{\R^3} \mathcal{C}^{\nub}_w \ub(\xb)\cdot\vb(\xb)d\xb=\int_{\R^3} \ub(\xb)\cdot\mathcal{C}^{-\nub}_w \vb(\xb)d\xb.
\end{equation}  
\end{enumerate}
\end{prop}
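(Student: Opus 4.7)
The plan is to prove all three integration by parts formulas by reducing each to a double integral over $\R^d\times\R^d$ (or $\R^3\times\R^3$) and exploiting the parity properties of $\chi_{\nub}$, $\frac{\yb-\xb}{|\yb-\xb|}$, and $w(\yb-\xb)$. The first step is to justify removing the principal value. By assumption $w(\xb-\yb)|\ub(\xb)-\ub(\yb)|\in L^1(\R^d\times\R^d)$, so by Tonelli the inner integrand is in $L^1(\R^d)$ for a.e.\ $\xb$; since $\chi_{\nub}(\yb-\xb)\frac{\yb-\xb}{|\yb-\xb|}$ has norm bounded by $1$, the dominated convergence theorem implies the principal-value limit in \eqref{hbgrad} coincides with the genuine Lebesgue integral, and
\[
\|\mathcal{G}^{\nub}_w\ub\|_{L^1(\R^d;\R^{d\times N})}\le\iint_{\R^d\times\R^d}w(\yb-\xb)|\ub(\yb)-\ub(\xb)|\,d\yb\,d\xb<\infty,
\]
so $\mathcal{G}^{\nub}_w\ub\in L^1(\R^d;\R^{d\times N})$. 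The analogous statements for $\mathcal{D}^{\nub}_w\ub$ and $\mathcal{C}^{\nub}_w\ub$ are obtained the same way.

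For part (1), since $\vb\in C_c^1$ is bounded, the full integrand
$\chi_{\nub}(\yb-\xb)\frac{\yb-\xb}{|\yb-\xb|}\otimes(\ub(\yb)-\ub(\xb))w(\yb-\xb):\vb(\xb)$
is dominated by $\|\vb\|_{L^\infty}w(\yb-\xb)|\ub(\yb)-\ub(\xb)|$, hence lies in $L^1(\R^d\times\R^d)$. I then apply Fubini and split $\ub(\yb)-\ub(\xb)$ into the two contributions. In the $\ub(\yb)$ piece I interchange $\xb\leftrightarrow\yb$; using that $w$ is radial, $\chi_{\nub}(\xb-\yb)=\chi_{-\nub}(\yb-\xb)$, and $\frac{\xb-\yb}{|\xb-\yb|}=-\frac{\yb-\xb}{|\yb-\xb|}$, one sign from the reflection and one from the change of $\chi_{\nub}$ to $\chi_{-\nub}$ combine so that this piece equals the $\vb(\yb)$ contribution in $-\ub(\xb)\cdot\mathcal{D}^{-\nub}_w\vb(\xb)$. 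In the $\ub(\xb)$ piece I change variables $\yb\mapsto 2\xb-\yb$ inside the inner integral to convert $\chi_{\nub}(\yb-\xb)$ to $\chi_{-\nub}(\yb-\xb)$ (picking up one sign on the unit vector), which matches the $-\vb(\xb)$ contribution in $-\ub(\xb)\cdot\mathcal{D}^{-\nub}_w\vb(\xb)$. The algebraic bridge between the two sides is the identity $(\ab\otimes\cb):M=\cb\cdot(M^T\ab)$ applied with $\ab=\frac{\yb-\xb}{|\yb-\xb|}$, $\cb=\ub(\cdot)$, $M=\vb(\xb)$.

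Part (2) is entirely parallel: Fubini gives a symmetric double integral, and the same pair of substitutions (swap and reflection) transform the divergence double integral into the gradient double integral, with the two parity sign flips again combining to produce the $-\nub$ in $\mathcal{G}^{-\nub}_w\vb$. For part (3), the same two substitutions are applied, but now the bilinear form carried inside is the cross product, so the identity $\ab\times\bb=-\bb\times\ab$ (together with $(\ab\times\bb)\cdot\cb=\ab\cdot(\bb\times\cb)$) contributes an extra sign compared to the gradient/divergence cases. This extra sign cancels one of the two sign flips produced by the symmetries, which is exactly why the curl formula carries a plus sign on the right-hand side instead of a minus.

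The only real obstacle is bookkeeping: ensuring that at every step the $L^1$ integrability needed to apply Fubini and the change-of-variable formula is in force, and keeping track of the interplay of the three sign changes (from $\chi_{\nub}\leftrightarrow\chi_{-\nub}$, from the unit vector, and, in part (3), from the antisymmetry of the cross product). I would first present the gradient case in full to fix the bookkeeping, then indicate that parts (2) and (3) proceed by the same template with the appropriate replacement of $\otimes$ by the dot or cross product.
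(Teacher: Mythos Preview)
Your overall strategy (Fubini plus parity/reflection of $\chi_{\nub}$, the unit vector, and $w$) is the right one and matches the paper's intent, but there is a genuine gap. After establishing that the full integrand
\[
\chi_{\nub}(\yb-\xb)\tfrac{\yb-\xb}{|\yb-\xb|}\otimes(\ub(\yb)-\ub(\xb))\,w(\yb-\xb):\vb(\xb)
\]
lies in $L^1(\R^d\times\R^d)$, you \emph{split} $\ub(\yb)-\ub(\xb)$ and manipulate each half separately. But the individual halves need not be integrable: the $\ub(\yb)$ piece is dominated only by $|\ub(\yb)||\vb(\xb)|w(\yb-\xb)$, and $\int_K(|\ub|\ast w)(\xb)\,d\xb$ can be $+\infty$ whenever $w\notin L^1(\R^d)$ (which the kernel assumption \eqref{eq:kernelassumption} explicitly permits, e.g.\ the fractional case). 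Likewise the $\ub(\xb)$ piece leads to the divergent inner integral $\int\chi_{\nub}(\yb-\xb)\tfrac{\yb-\xb}{|\yb-\xb|}w(\yb-\xb)\,d\yb$. So neither the swap $\xb\leftrightarrow\yb$ nor the reflection $\yb\mapsto 2\xb-\yb$ can be applied to the split pieces as Lebesgue integrals, and the matching you describe with the ``$\vb(\yb)$'' and ``$-\vb(\xb)$'' contributions of $\mathcal{D}^{-\nub}_w\vb$ is only formal.

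The paper's cure is exactly to \emph{not} remove the principal value before splitting: work on $\R^{2d}_\epsilon:=\{|\xb-\yb|>\epsilon\}$, where $w\chi_{\{|\cdot|>\epsilon\}}\in L^1(\R^d)$ by \eqref{eq:kernelassumption}, so both halves are genuinely in $L^1(\R^{2d}_\epsilon)$ (using $\ub\in L^1$, $\vb$ bounded with compact support, and Young's inequality). On $\R^{2d}_\epsilon$ your swap and reflection are legitimate; the paper then rewrites $\mathcal{D}^{-\nub}_w\vb$ via the identity $\chi_{\nub}(\xb-\yb)+\chi_{\nub}(\yb-\xb)=1$ (equivalent to your reflection trick), proves the equality of the two $\epsilon$-truncated double integrals, and only then lets $\epsilon\to 0$ on each side by dominated convergence. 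Your argument becomes correct once you reinstate this $\epsilon$-cutoff for the splitting step; without it, the proof is incomplete for non-integrable kernels.
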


\begin{rem}
\label{rem:div_equiv}
As seen from the proof of \Cref{prop:hbibp} in the appendix, 
an equivalent definition of the divergence operator in \cref{hbdiv} is given as
\[
 \mathcal{D}_w^{\nub} \vb(\xb)=\lim_{\epsilon\to 0}\int_{\R^d\backslash B_\epsilon(\xb)}\left[\frac{\yb^T-\xb^T}{|\yb-\xb|} \left(\chi_{\nub}(\yb-\xb)\vb(\yb)+\chi_{\nub}(\xb-\yb) \vb(\xb)\right)\right]^T w(\yb-\xb)d\yb,
\]
for $\xb\in \R^d$.
\end{rem}
We point out that nonlocal gradient, divergence and curl can be defined for complex-valued functions via \cref{hbgrad}, \cref{hbdiv} and \cref{hbcurl}, respectively, where the dot product in \cref{hbdiv} is understood as the inner product in $\C^d$, i.e., $\zb\cdot\wb:=\zb^T\overline{\wb}$ for $\zb,\wb\in\C^d$, and the cross product in \cref{hbcurl} is understood as the cross product in $\C^d$. This extension will be useful in the proof of \Cref{prop:hbgradkerisnull}.

\subsection{Distributional nonlocal operators}
Previously, we defined nonlocal nonsymmetric operators in the principal value sense. It turns out that this notion is not enough to define nonlocal Sobolev spaces. Instead, we need the notion of distributional nonlocal gradient as the notion of weak derivative in the local setting. One way to define it is via its adjoint operator, i.e., nonlocal nonsymmetric divergence operator defined in the last subsection.

Following the idea in \cite{mengesha2016characterization}, we define the distributional nonlocal operators as follows.

\begin{defn}
Let $1\le p\le\infty$. Given $\ub\in L^p(\R^d;\R^N)$, we define the \textbf{distributional nonlocal gradient} $\mathfrak{G}_w^{\nub} \ub\in (C^\infty_c(\R^d;\R^{d\times N}))'$ as
\begin{equation}
    \langle \mathfrak{G}_w^{\nub} \ub,\bm\phi\rangle:=-\int_{\R^d} \ub(\xb)\cdot\mathcal{D}_w^{-\nub}\bm\phi(\xb)d\xb ,\quad\forall \bm\phi\in C_c^\infty(\R^d;\R^{d\times N}).
\end{equation}
Given $\ub\in L^p(\R^d;\R^{d\times N})$, we define the \textbf{distributional nonlocal divergence} $\mathfrak{D}_w^{\nub} \ub\in (C^\infty_c(\R^d;\R^N))'$ as
\begin{equation}
    \langle \mathfrak{D}_w^{\nub} \ub,\bm\phi\rangle:=-\int_{\R^d} \ub(\xb):\mathcal{G}_w^{-\nub}\bm\phi(\xb)d\xb ,\quad\forall \bm\phi\in C_c^\infty(\R^d;\R^N).
\end{equation}
 If $d=3$ with $\ub\in L^p(\R^3;\R^3)$, we define the \textbf{distributional nonlocal curl} $\mathfrak{C}_w^{\nub} \ub\in (C^\infty_c(\R^3;\R^3))'$ as
\begin{equation}
    \langle \mathfrak{C}_w^{\nub} \ub,\bm\phi\rangle:=\int_{\R^3} \ub(\xb)\cdot\mathcal{C}_w^{-\nub}\bm\phi(\xb)d\xb ,\quad\forall \bm\phi\in C_c^\infty(\R^3;\R^3).
\end{equation} 
\end{defn}

\begin{rem}
For $u\in L^p(\R^d)$, $\mathfrak{G}_w^{\nub} u$ is indeed a distribution as for any compact set $K\subset \R^d$ and $\bm\phi\in C_c^\infty(\R^d;\R^{d})$ with support contained in $K$,
\begin{align*}
    |\langle \mathfrak{G}_w^{\nub} u,\bm\phi\rangle|&\le \|u\|_{L^p(\R^d)}\|\mathcal{D}_w^{-\nub}\bm\phi\|_{L^{p'}(\R^d)} \\
    &\le 
    C\left( M_w^1\|\nabla \bm\phi\|_{L^{p'}(\R^d;\R^{d\times d})}+M_w^2\|\bm\phi\|_{L^{p'}(\R^d;\R^d)}\right)\|u\|_{L^p(\R^d)}\\
    &\le C|K|^{\frac{1}{p'}}\left( M_w^1\|\nabla \bm\phi\|_{L^{\infty}(\R^d;\R^{d\times d})}+M_w^2\|\bm\phi\|_{L^{\infty}(\R^d;\R^d)}\right)\|u\|_{L^p(\R^d)},
\end{align*}
where $p'=\frac{p}{p-1}$ ($p'=\infty$ for $p=1$ and $p'=1$ for $p=\infty$) and \cref{Lpestdivsmooth} is used in the above inequalities. Similarly, it can be shown that $\mathfrak{D}^{\nub}_w  \ub$ and $\mathfrak{C}^{\nub}_w  \ub$ are distributions using \cref{Lpestgradsmooth} and \cref{Lpestcurlsmooth}.
\end{rem}

From the integration by parts formulas in \Cref{prop:hbibp}, we immediately have the following results when the distributional operators $\mathfrak{G}^{\nub}_w \ub$, $\mathfrak{D}^{\nub}_w  \ub$ and $\mathfrak{C}^{\nub}_w  \ub$ coincide with  $\mathcal{G}^{\nub}_w \ub$, $\mathcal{D}^{\nub}_w \ub$ and $\mathcal{C}^{\nub}_w \ub$, respectively. 

\begin{coro}\label{cor:pvopeqdistop}\hfill
\begin{enumerate}
    \item Suppose $\ub\in L^1(\R^d;\R^N)$ and $w(\xb-\yb)\left|\ub(\xb)-\ub(\yb)\right|\in L^1(\R^d\times \R^d)$,  then $\mathcal{G}^{\nub}_w \ub=\mathfrak{G}^{\nub}_w \ub$ in $L^1(\R^d;\R^{d\times N})$.
    \item Suppose $\ub\in L^1(\R^d;\R^{d\times N})$ and $w(\xb-\yb)\left|\ub(\xb)-\ub(\yb)\right|\in L^1(\R^d\times \R^d)$,  then $\mathcal{D}^{\nub}_w \ub=\mathfrak{D}^{\nub}_w  \ub$ in $L^1(\R^d;\R^N)$.
    \item Suppose $\ub\in L^1(\R^3;\R^3)$ and $w(\xb-\yb)\left|\ub(\xb)-\ub(\yb)\right|\in L^1(\R^3\times \R^3)$,  then $\mathcal{C}^{\nub}_w \ub=\mathfrak{C}^{\nub}_w  \ub$ in $L^1(\R^3)$.
\end{enumerate}
\end{coro}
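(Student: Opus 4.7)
The plan is to read this corollary as an immediate consequence of the integration by parts formulas in \Cref{prop:hbibp} combined with the definitions of the distributional operators. The only move needed is to specialize the test-function class in \Cref{prop:hbibp} from $C^1_c$ down to $C^\infty_c$ (which is valid because $C^\infty_c\subset C^1_c$) and then compare term-by-term with the definition of the distributional operator.

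For part (1), the hypotheses $\ub\in L^1(\R^d;\R^N)$ and $w(\xb-\yb)|\ub(\xb)-\ub(\yb)|\in L^1(\R^d\times\R^d)$ are precisely those of \Cref{prop:hbibp}(1). So $\mathcal{G}^{\nub}_w\ub\in L^1(\R^d;\R^{d\times N})$ and
\[
\int_{\R^d}\mathcal{G}^{\nub}_w\ub(\xb):\phib(\xb)\,d\xb \;=\; -\int_{\R^d}\ub(\xb)\cdot \mathcal{D}^{-\nub}_w\phib(\xb)\,d\xb
\]
for every $\phib\in C^\infty_c(\R^d;\R^{d\times N})$. By definition of $\mathfrak{G}^{\nub}_w\ub$, the right-hand side equals $\langle\mathfrak{G}^{\nub}_w\ub,\phib\rangle$, so
\[
\langle\mathfrak{G}^{\nub}_w\ub,\phib\rangle \;=\; \int_{\R^d}\mathcal{G}^{\nub}_w\ub(\xb):\phib(\xb)\,d\xb
\qquad\forall\,\phib\in C^\infty_c(\R^d;\R^{d\times N}).
\]
This is exactly the statement that the distribution $\mathfrak{G}^{\nub}_w\ub$ is represented by the $L^1$ function $\mathcal{G}^{\nub}_w\ub$; by the fundamental lemma of the calculus of variations the two agree almost everywhere, giving the claimed equality in $L^1(\R^d;\R^{d\times N})$.

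Parts (2) and (3) follow by the same two-line argument, invoking \Cref{prop:hbibp}(2) and (3) together with the definitions of $\mathfrak{D}^{\nub}_w\ub$ and $\mathfrak{C}^{\nub}_w\ub$ respectively; the sign conventions in part (3) match because the distributional curl is \emph{defined} with a positive sign precisely to mirror the curl integration by parts in \Cref{prop:hbibp}(3). I do not expect any real obstacle: all the analytic work (existence of $\mathcal{G}^{\nub}_w\ub$, $\mathcal{D}^{\nub}_w\ub$, $\mathcal{C}^{\nub}_w\ub$ as $L^1$ functions, Fubini-type justification, and the integration by parts identity) has already been carried out in \Cref{prop:hbibp}, so what remains is only the bookkeeping observation that the pairing appearing in the definition of each distributional operator is literally the right-hand side of the corresponding integration by parts formula.
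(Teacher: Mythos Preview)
Your proposal is correct and matches the paper's approach exactly: the paper simply states that the corollary follows immediately from the integration by parts formulas in \Cref{prop:hbibp}, and your write-up spells out precisely this deduction by pairing those formulas with the definitions of the distributional operators.
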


\subsection{Fourier symbols of nonlocal operators}
In this subsection, we study the Fourier symbols of nonlocal operators defined in the previous subsection. These results will be used in the analysis in the subsequent sections. 

Define 
\begin{equation}\label{deffouriersymboldiv}
\bm\lambda_w^{\nub}(\bm\xi):=\int_{\mathbb{R}^d}\chi_{\nub}(\zb)\frac{\zb}{|\zb|}w(\zb)(e^{2\pi i\bm\xi\cdot \zb}-1)d\zb,\quad\bm\xib\in\R^d.
\end{equation}
It is immediate that $\bm\lambda_w^{-\nub}(\bm\xi)=-\overline{\bm\lambda_w^{\nub}(\bm\xi)}$ for $\bm\xib\in\R^d$. In fact,  $\bm\lambda_w^{\nub}$ is the Fourier symbol of  $\mathcal{G}_w^{\nub}$, $\mathcal{D}^{\nub}_w $ and $\mathcal{C}_w^{\nub}$ in the sense described below.
We now present this fact without proof since the proof is straightforward. Indeed, first prove the result for smooth functions with compact support and then use \eqref{LpestgradW1p}-\eqref{LpestcurlW1p} for $p=2$ and density of $C^\infty_c(\R^d)$ in $H^1(\R^d)$. Similar results can also be found in \cite{lee2020nonlocal}.

\begin{lem}\label{lem:FT}
Let $\ub\in H^1(\mathbb{R}^d;\R^N)$ and $\vb\in H^1(\mathbb{R}^d;\R^{d\times N})$. 
The Fourier transform of the nonlocal gradient operator $\mathcal{G}_w^{\nub}$ acting on $\ub$ is given by
\begin{equation}\label{eq:fourierhbgrad}
    \mathcal{F}(\mathcal{G}_w^{\nub} \ub)(\bm\xi)=\bm\lambda_w^{\nub}(\bm\xi)\otimes \hat{\ub}(\bm\xi),\quad\bm \xi\in\R^d,
\end{equation}
and the Fourier transform of the nonlocal divergence operator $\mathcal{D}^{\nub}_w $ acting on $\vb$ is given by
\begin{equation}\label{eq:fourierhbdiv}
    \mathcal{F}(\mathcal{D}^{\nub}_w  \vb)(\bm\xi)=\left(\bm\lambda_w^{\nub}(\bm\xi)^T\hat{\vb}(\bm\xi)\right)^T,\quad\bm \xi\in\R^d.
\end{equation}
If, in particular, $d=3$ and  $\vb\in H^1(\mathbb{R}^3;\R^3)$, then the Fourier transform of the nonlocal curl operator $\mathcal{C}_w^{\nub}$ acting on $\vb$ is given by
\begin{equation}\label{eq:fourierhbcurl}
    \mathcal{F}(\mathcal{C}_w^{\nub} \vb)(\bm\xi)=\bm\lambda_w^{\nub}(\bm\xi)\times\hat{\vb}(\bm\xi),\quad\bm \xi\in\R^3.
\end{equation}
\end{lem}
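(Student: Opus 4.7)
The plan is to follow the two-step density argument sketched by the authors: establish the identities first for compactly supported smooth test functions, where Fubini applies directly, then pass to the $H^1$ limit using the $L^2$-boundedness estimates from \Cref{lem:opforW1pfct}.

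As a preliminary, I would verify that the Fourier symbol $\bm\lambda_w^{\nub}$ is a locally bounded function on $\R^d$ with at most linear growth. Using $|e^{2\pi i\xib\cdot\zb}-1|\leq \min(2,2\pi|\xib||\zb|)$ and splitting the defining integral \cref{deffouriersymboldiv} over $\{|\zb|\leq 1\}$ and $\{|\zb|>1\}$, the kernel assumption \cref{eq:kernelassumption} gives
\[
|\bm\lambda_w^{\nub}(\xib)|\leq C(1+|\xib|)(M_w^1+M_w^2).
\]
This ensures that the product $\bm\lambda_w^{\nub}\otimes\hat{\ub}$ lies in $L^2(\R^d;\R^{d\times N})$ whenever $\ub\in H^1$, and likewise for the divergence and curl expressions.

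Next, for $\ub\in C_c^\infty(\R^d;\R^N)$, I would apply \Cref{lem:opforsmoothfct} to drop the principal value and use the change of variable $\zb=\yb-\xb$ to rewrite
\[
\mathcal{G}_w^{\nub}\ub(\xb)=\int_{\R^d}\chi_{\nub}(\zb)\frac{\zb}{|\zb|}w(\zb)\bigl(\ub(\xb+\zb)-\ub(\xb)\bigr)d\zb.
\]
The joint integrand is dominated in $L^1(\R^d_\xb\times\R^d_\zb)$ after multiplying by the character $e^{-2\pi i\xib\cdot\xb}$ and integrating in $\xb$, controlled near $\zb=\mathbf{0}$ by $|\nabla\ub|\cdot|\zb|w(\zb)$ and away from the origin by $|\ub|\cdot w(\zb)$; this is exactly the integrability used to derive \cref{Lpestgradsmooth}. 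Fubini then allows swapping the order of integration, and the inner Fourier transform of the translation difference gives the factor $(e^{2\pi i\xib\cdot\zb}-1)\hat{\ub}(\xib)$. Reassembling produces \cref{eq:fourierhbgrad}. The divergence and curl identities follow in precisely the same way, with the tensor product replaced by a dot product (respectively cross product) in $\R^d$, and using \cref{Lpestdivsmooth} and \cref{Lpestcurlsmooth} to justify Fubini.

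Finally, to extend to $\ub\in H^1$, pick $\ub_n\in C_c^\infty$ with $\ub_n\to \ub$ in $H^1$. By \cref{LpestgradW1p} with $p=2$, $\mathcal{G}_w^{\nub}\ub_n\to \mathcal{G}_w^{\nub}\ub$ in $L^2$, so Plancherel gives $\mathcal{F}(\mathcal{G}_w^{\nub}\ub_n)\to \mathcal{F}(\mathcal{G}_w^{\nub}\ub)$ in $L^2$. On the Fourier side, the linear growth bound on $\bm\lambda_w^{\nub}$ yields
\[
\|\bm\lambda_w^{\nub}\otimes(\hat{\ub}_n-\hat{\ub})\|_{L^2}\leq C\|(1+|\xib|)(\hat{\ub}_n-\hat{\ub})\|_{L^2}\leq C'\|\ub_n-\ub\|_{H^1}\to 0.
\]
Since both sides converge in $L^2$ and agree along the smooth approximants, the identity \cref{eq:fourierhbgrad} holds a.e. The divergence and curl cases use \cref{LpestdivW1p} and \cref{LpestcurlW1p} analogously. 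The only place requiring any care is bookkeeping the tensor/dot/cross conventions so that the right-hand sides match the definitions \cref{hbgrad}--\cref{hbcurl} exactly; once that is kept straight, both the Fubini step and the density step are routine, consistent with the authors' remark that the proof is straightforward.
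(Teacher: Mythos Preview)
Your proposal is correct and follows exactly the approach the paper sketches in place of a proof: establish the identities for $C_c^\infty$ functions via Fubini (justified by the $\min(1,|\zb|)w(\zb)$ integrability in \cref{eq:kernelassumption}), then extend to $H^1$ by density using the $p=2$ bounds \cref{LpestgradW1p}--\cref{LpestcurlW1p}. Your linear-growth bound on $\bm\lambda_w^{\nub}$ is the same as \cref{lambdaupbdd}, and the bookkeeping caveat about tensor/dot/cross conventions is the only place that needs attention, as you note.
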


Now we write out the real and imaginary part of $\bm\lambda_w^{\nub}(\bm\xi)$ explicitly and show that the imaginary part is a scalar multiple of $\bm\xi$. Moreover, the upper bound of $\bm\lambda_w^{\nub}(\bm\xi)$ is linear in $|\bm\xi|$. The proof of the following lemma is omitted since it follows from Lemma 2.3 and the last part of Theorem 2.4 in \cite{lee2020nonlocal}.
\begin{lem}\label{lem:lambdaexplicitform}
The Fourier symbol $\bm\lambda_w^{\nub}(\bm\xi)$ can be expressed as \[\bm\lambda_w^{\nub}(\bm\xi)=\Re (\bm\lambda_w^{\nub})(\bm\xi)+i \Im(\bm\lambda_w^{\nub})(\bm\xi),\]
where \begin{equation}\label{lambdareal}
    \Re (\bm\lambda_w^{\nub})(\bm\xi)=\int_{\mathbb{R}^d}\chi_{\nub}(\zb)\frac{\zb}{|\zb|}w(\zb)(\cos(2\pi \bm\xi\cdot \zb)-1)d\zb,
\end{equation}
\begin{equation}\label{lambdaim}
    \Im (\bm\lambda_w^{\nub})(\bm\xi)=\int_{\mathbb{R}^d}\chi_{\nub}(\zb)\frac{\zb}{|\zb|}w(\zb)\sin(2\pi \bm\xi\cdot \zb)d\zb,
\end{equation}
and $\Im (\bm\lambda_w^{\nub})(\bm\xi)=\Lambda_w(|\bm\xi|)\frac{\bm\xi}{|\bm\xi|}$ with
\begin{equation}\label{defLambda}
    \Lambda_w(|\bm\xi|)=\frac{1}{2}\int_{\R^d} \frac{w(\zb)}{|\zb|}z_1\sin(2\pi |\bm\xi|z_1)d\zb.
\end{equation}
Moreover, 
\begin{equation}\label{lambdaupbdd}
    \left|\bm\lambda_w^{\nub}(\bm\xi)\right|\le \sqrt{2}\left(2\pi M_w^1|\bm\xi|+M_w^2
\right),\quad\forall\bm\xib\in\R^d.
\end{equation}
\end{lem}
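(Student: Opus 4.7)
First I would derive (1) by applying Euler's formula $e^{2\pi i\bm\xi\cdot\zb} - 1 = (\cos(2\pi\bm\xi\cdot\zb)-1) + i\sin(2\pi\bm\xi\cdot\zb)$ inside the integrand of \eqref{deffouriersymboldiv} and splitting into real and imaginary parts to obtain \eqref{lambdareal} and \eqref{lambdaim}. The splitting is legitimate because the elementary bounds $|\cos\theta-1|\le\min(|\theta|,2)$ and $|\sin\theta|\le\min(|\theta|,1)$, combined with \eqref{eq:kernelassumption}, produce an integrable majorant on each piece.

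Next I would establish the scalar form of $\Im(\bm\lambda_w^{\nub})(\bm\xi)$ via a symmetrization over the two half-spaces $\mathcal{H}_{\nub}$ and $\mathcal{H}_{-\nub}$. The integrand in \eqref{lambdaim}, namely $\frac{\zb}{|\zb|}w(\zb)\sin(2\pi\bm\xi\cdot\zb)$, is \emph{even} in $\zb$ (both $\zb/|\zb|$ and the sine are odd, while $w$ is radial). Substituting $\zb\mapsto-\zb$ therefore shows that its integral over $\mathcal{H}_{\nub}$ equals its integral over $\mathcal{H}_{-\nub}$, and since $\chi_{\nub}+\chi_{-\nub}=1$ a.e.\ one gets
\[
\Im(\bm\lambda_w^{\nub})(\bm\xi)=\frac{1}{2}\int_{\R^d}\frac{\zb}{|\zb|}w(\zb)\sin(2\pi\bm\xi\cdot\zb)d\zb,
\]
which no longer depends on $\nub$. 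Choosing an orthogonal $R$ with $R\bm\xi = |\bm\xi|\eb_1$ and changing variables to $\etab = R\zb$ (which preserves $d\zb$, $|\zb|$, and $w(\zb)$ by radial symmetry) reduces $R$ times the right-hand side to $\frac{1}{2}\int_{\R^d}\frac{\etab}{|\etab|}w(\etab)\sin(2\pi|\bm\xi|\eta_1)d\etab$. Its first component is precisely $\Lambda_w(|\bm\xi|)$ from \eqref{defLambda}, and its components for $j\ge 2$ vanish by oddness in $\eta_j$; rotating back via $R^T\eb_1 = \bm\xi/|\bm\xi|$ then yields $\Im(\bm\lambda_w^{\nub})(\bm\xi) = \Lambda_w(|\bm\xi|)\bm\xi/|\bm\xi|$.

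Finally, for the upper bound \eqref{lambdaupbdd}, I would estimate the real and imaginary parts separately. Using $|\cos(2\pi\bm\xi\cdot\zb)-1|\le\min(2\pi|\bm\xi||\zb|,2)$ and $|\sin(2\pi\bm\xi\cdot\zb)|\le\min(2\pi|\bm\xi||\zb|,1)$, splitting each integral over $\{|\zb|\le 1\}\cup\{|\zb|>1\}$, and invoking the radial-symmetry identities $\int_{\mathcal{H}_{\nub}\cap\{|\zb|\le 1\}}w(\zb)|\zb|d\zb = \frac{1}{2}M_w^1$ and $\int_{\mathcal{H}_{\nub}\cap\{|\zb|>1\}}w(\zb)d\zb = \frac{1}{2}M_w^2$, one obtains
\[
|\Re(\bm\lambda_w^{\nub})(\bm\xi)|,\ |\Im(\bm\lambda_w^{\nub})(\bm\xi)|\le 2\pi M_w^1|\bm\xi| + M_w^2.
\]
The claim then follows from $|\bm\lambda_w^{\nub}(\bm\xi)|^2 = |\Re(\bm\lambda_w^{\nub})(\bm\xi)|^2 + |\Im(\bm\lambda_w^{\nub})(\bm\xi)|^2 \le 2(2\pi M_w^1|\bm\xi|+M_w^2)^2$. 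The only step that requires genuine care is the rotation/parity argument producing $\Lambda_w$: one must carefully exploit radial symmetry of $w$ to rotate coordinates without disturbing the integrand, and then use oddness in the transverse coordinates to collapse the vector integral to a scalar along $\bm\xi/|\bm\xi|$. Parts (1) and (3) are routine bookkeeping given the elementary trigonometric bounds and \eqref{eq:kernelassumption}.
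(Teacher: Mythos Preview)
Your argument is correct. The paper itself omits the proof of this lemma, pointing instead to Lemma~2.3 and Theorem~2.4 of \cite{lee2020nonlocal}; your self-contained derivation via Euler's formula, the parity/symmetrization over $\mathcal{H}_{\nub}$ and $\mathcal{H}_{-\nub}$, the rotation aligning $\bm\xi$ with $\eb_1$, and the elementary trigonometric bounds is precisely the standard route and matches what that reference does. One very minor remark: with the half-space identities you invoke, your method actually yields the slightly sharper bounds $|\Re(\bm\lambda_w^{\nub})(\bm\xi)|\le \pi M_w^1|\bm\xi|+M_w^2$ and $|\Im(\bm\lambda_w^{\nub})(\bm\xi)|\le \pi M_w^1|\bm\xi|+\tfrac{1}{2}M_w^2$, but of course the weaker bound $2\pi M_w^1|\bm\xi|+M_w^2$ you state suffices for \eqref{lambdaupbdd}.
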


In the following, we present two other observations of the Fourier symbol $\bm\lambda_w^{\nub}$ that are useful in \Cref{sec:Poincare_int_kernel}. 
The first result concerns the positivity of $|\bm\lambda_w^{\nub}|$ away from the origin, and the second result asserts that $\bm\lambda_w^{\nub}$ is a smooth function.

\begin{prop}\label{prop:lambdatransandpositive}
For every $d\times d$ orthogonal matrix $R$,
\begin{equation}\label{eq:foursymborthotrans}
    \bm\lambda_w^{R\nub}(\bm\xi)=R\bm\lambda_w^{\nub}(R^T\bm\xi),\quad\forall\bm\xi\neq \bm 0.
\end{equation}
The same formula holds for both $\Re (\bm\lambda_w^{\nub})(\bm\xi)$ and $\Im (\bm\lambda_w^{\nub})(\bm\xi)$. Consequently, 
\begin{equation}\label{lambpost}
    \left|\bm\lambda_w^{\nub}(\bm\xi)\right|>0,\quad\forall\bm\xi\neq \bm 0.
\end{equation}
\end{prop}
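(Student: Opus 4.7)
The plan is to prove the transformation formula \eqref{eq:foursymborthotrans} by a direct change of variables, and then extract \eqref{lambpost} as a corollary of a sign analysis of the real part alone.

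For the transformation identity, I would start from the definition \eqref{deffouriersymboldiv} written at the unit vector $R\nub$ and substitute $\zb = R\zb'$. Since $R$ is orthogonal, $|\det R|=1$, $|R\zb'|=|\zb'|$, and $w(R\zb')=w(\zb')$ by the radial symmetry assumed in \eqref{eq:kernelassumption}. The indicator transforms as $\chi_{R\nub}(R\zb')=\chi_{\{\zb'\cdot R^{T}R\nub\ge 0\}}=\chi_{\nub}(\zb')$, and the phase rewrites as $e^{2\pi i\bm\xi\cdot R\zb'}=e^{2\pi i R^{T}\bm\xi\cdot\zb'}$. Pulling the linear map $R$ outside the integral produces $\bm\lambda_w^{R\nub}(\bm\xi)=R\bm\lambda_w^{\nub}(R^{T}\bm\xi)$. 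Since $R$ is real, taking real and imaginary parts in \eqref{lambdareal}--\eqref{lambdaim} separately gives the same identity for $\Re(\bm\lambda_w^{\nub})$ and $\Im(\bm\lambda_w^{\nub})$.

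For \eqref{lambpost}, I would argue that $\Re(\bm\lambda_w^{\nub})(\bm\xi)$ is already nonzero for every $\bm\xi\ne\bm 0$, by dotting it with $\nub$:
$$\Re(\bm\lambda_w^{\nub})(\bm\xi)\cdot\nub=\int_{\R^d}\chi_{\nub}(\zb)\,\frac{\zb\cdot\nub}{|\zb|}\,w(\zb)\,\bigl(\cos(2\pi\bm\xi\cdot\zb)-1\bigr)\,d\zb.$$
On the support of $\chi_{\nub}$ each of the factors $\chi_{\nub}(\zb)$, $\zb\cdot\nub/|\zb|$, and $w(\zb)$ is nonnegative, while $\cos(2\pi\bm\xi\cdot\zb)-1\le 0$, so the integrand is pointwise nonpositive. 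On the open half-ball $E:=\{\zb\in\R^d:0<|\zb|<\epsilon_0,\ \zb\cdot\nub>0\}$, the first three factors are in fact strictly positive by the assumption \eqref{eq:kernelassumption} that $w>0$ on $0<|\xb|\le\epsilon_0$. For $\bm\xi\ne\bm 0$ the set $\{\zb\in\R^d:\bm\xi\cdot\zb\in\Z\}$ is a countable union of affine hyperplanes and hence has Lebesgue measure zero, so $\cos(2\pi\bm\xi\cdot\zb)-1<0$ almost everywhere on $E$. This forces $\Re(\bm\lambda_w^{\nub})(\bm\xi)\cdot\nub<0$, hence $|\bm\lambda_w^{\nub}(\bm\xi)|>0$.

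The only subtle point will be this last measure-theoretic step, confirming that the zero set of $\cos(2\pi\bm\xi\cdot\zb)-1$ cannot absorb the whole open half-ball $E$ when $\bm\xi\ne\bm 0$; once that is granted, everything else is a routine change of variables and a sign check. I expect no other obstacles, and note that with only slightly more work the same reasoning shows strict negativity of $\Re(\bm\lambda_w^{\nub})(\bm\xi)\cdot\nub$ in $\bm\xi\ne\bm 0$, which may be useful later in the paper.
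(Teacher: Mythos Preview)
Your proposal is correct and follows essentially the same route as the paper: a change of variables for \eqref{eq:foursymborthotrans}, and for \eqref{lambpost} dotting the real part with the normal direction and using that the integrand is nonpositive and strictly negative off the measure-zero set $\{\zb:\bm\xi\cdot\zb\in\Z\}$. The only cosmetic difference is that the paper first invokes \eqref{eq:foursymborthotrans} to rotate $\nub$ to $\eb_1$ before dotting with $\eb_1$, whereas you dot directly with $\nub$; your version is marginally cleaner since it does not need the transformation formula as an ingredient in the positivity argument.
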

\begin{proof}
\Cref{eq:foursymborthotrans} can be easily seen from a change of variable. For a fixed unit vector $\nub\in\mathbb{R}^d$, there exists an orthogonal matrix $R_{\nub}$ such that $\nub=R_{\nub}\eb_1$. By \eqref{eq:foursymborthotrans}, for $\xib\neq \bm 0$,
\begin{align*}
    |{\bm\lambda}_w^{\nub}(\xib)|&=|R_{\nub}{\bm\lambda}_w^{\eb_1}(R_{\nub}^T\xib)|=|{\bm\lambda}_w^{\eb_1}(R_{\nub}^T\xib)|\ge |{\bm\lambda}_w^{\eb_1}(R_{\nub}^T\xib)\cdot \eb_1|\ge |\Re({\bm\lambda}_w^{\eb_1}(R_{\nub}^T\xib)\cdot \eb_1)|\\
    &=\int_{\{z_1>0\}}\frac{z_1}{|\zb |}w(\zb )\left(1-\mathrm{cos}\left(2\pi(R_{\nub}^T\xib\cdot \zb \right)\right)d\zb >0,
\end{align*}
where the last inequality holds because the integrand is nonnegative and the set 
\[\{\zb \in\mathbb{R}^d:z_1>0,\ (R_{\nub}^T\xib)\cdot \zb \in\mathbb{Z}\}\] is a set of measure zero in $\mathbb{R}^d$. Thus, \eqref{lambpost} holds.
\end{proof}

\begin{prop}\label{prop:fouriersymsmooth}
Suppose the kernel function $w$ satisfies \cref{eq:kernelassumption}, and in addition, the support of $w$ is a compact set in $\R^d$.
Then the Fourier symbol $\bm\lambda_w^{\nub}\in C^\infty(\mathbb{R}^d;\mathbb{C}^d)$.
\end{prop}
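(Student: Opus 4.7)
The plan is to prove smoothness by iterated differentiation under the integral sign. Concretely, I expect that for every multi-index $\alpha\in\N^d$ with $|\alpha|\ge 1$,
$$\partial^\alpha_{\bm\xi}\bm\lambda_w^{\nub}(\bm\xi)=(2\pi i)^{|\alpha|}\int_{\R^d}\chi_{\nub}(\zb)\,\frac{\zb}{|\zb|}\,w(\zb)\,z^\alpha e^{2\pi i\bm\xi\cdot\zb}\,d\zb,$$
the constant $-1$ inside the integrand of \cref{deffouriersymboldiv} being killed by the first derivative.

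The verification reduces to producing an integrable majorant, independent of $\bm\xi$ on compact sets, to legitimize bringing $\partial^\alpha$ inside the integral. Choose $R\ge 1$ with $\supp w\subset\overline{B_R(\bm 0)}$, and split the integration domain into $\{|\zb|\le 1\}$ and $\{1<|\zb|\le R\}$. Since $|z^\alpha|\le|\zb|^{|\alpha|}$, we have $|z^\alpha|\le|\zb|$ on the first piece (using $|\alpha|\ge 1$) and $|z^\alpha|\le R^{|\alpha|}$ on the second, so the integrand is dominated by
$$|\zb|\,w(\zb)\mathbf{1}_{|\zb|\le 1}+R^{|\alpha|}\,w(\zb)\mathbf{1}_{1<|\zb|\le R},$$
which is integrable by the two pieces $M_w^1$ and $M_w^2$ in \cref{eq:kernelassumption}. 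Dominated convergence then yields continuity of each first-order partial; iterating to order $|\alpha|$ closes the induction and gives $\bm\lambda_w^{\nub}\in C^\infty(\R^d;\C^d)$.

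The only delicate step is the base case $|\alpha|=0$, i.e., continuity of $\bm\lambda_w^{\nub}$ itself, where one cannot drop the constant $-1$. Here I would use the cancellation $|e^{2\pi i\bm\xi\cdot\zb}-1|\le\min(2\pi|\bm\xi||\zb|,2)$ to produce a $|\zb|\,w(\zb)\mathbf{1}_{|\zb|\le 1}$-type majorant near the origin. This is the main (albeit mild) obstacle: the kernel $w$ may be singular at $\bm 0$, so one cannot simply dominate by $w\cdot\mathbf{1}_{\supp w}$ on a neighborhood of the origin, and it is precisely the moment condition $\int_{|\zb|\le 1}|\zb|\,w(\zb)\,d\zb<\infty$ that absorbs this singularity. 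Compactness of $\supp w$ enters only to control the polynomial factors $|z^\alpha|$ away from the origin at each higher order.
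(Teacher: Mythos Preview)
Your proposal is correct and follows essentially the same approach as the paper: differentiate under the integral sign using the formula $\partial^{\alpha}\bm\lambda_w^{\nub}(\bm\xi)=\int_{\R^d}\chi_{\nub}(\zb)\frac{\zb}{|\zb|}w(\zb)(2\pi i\zb)^{\alpha}e^{2\pi i\bm\xi\cdot\zb}\,d\zb$ and justify it by an integrable majorant. The paper simply states that the integrand is controlled by $|\zb|w(\zb)$ (implicitly using $|\zb|^{|\alpha|}\le R^{|\alpha|-1}|\zb|$ on the compact support), whereas you spell out the split at $|\zb|=1$ and also treat the base case $|\alpha|=0$ explicitly via the bound $|e^{2\pi i\bm\xi\cdot\zb}-1|\le\min(2,2\pi|\bm\xi||\zb|)$; these are just more careful versions of the same argument.
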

\begin{proof}
Notice that for any multi-index $\bm\gamma$ with $|\bm\gamma|>0$,
\[D^{\bm\gamma}({\bm\lambda}_w^{\nub})(\xib)=\int_{\R^d}\chi_{\nub}(\zb)\frac{\zb}{|\zb|}w(\zb)(2\pi i\zb)^{\bm\gamma} e^{2\pi i\xib\cdot \zb} d\zb.\]
Since $w$ is a compactly supported kernel function, the integrand on the right-hand side of the above equation can be controlled by the integrable function $|\zb|w(\zb)$. Hence, ${\bm\lambda}_w^{\nub}\in C^\infty(\mathbb{R}^d;\mathbb{C}^d)$.
\end{proof}

\subsection{Nonlocal vector identities for smooth functions }\label{sec:nvismoothfct}
In this subsection, we present some nonlocal vector identities for smooth functions with compact support. These results will be generalized for a larger class of functions in \Cref{sec:space} and become crucial for applications in \Cref{sec:applications}.

The following lemma shows that $\mathcal{C}^{\nub}_w \circ\mathcal{G}^{\nub}_w =0$ and $\mathcal{D}^{\nub}_w \circ \mathcal{C}^{\nub}_w =0$, analogous to $\mathrm{curl}\circ\mathrm{grad}=0$ and $\mathrm{div}\circ\mathrm{curl}=0$ in the local setting.  
\begin{lem}\label{lem:vanishidentity}
Let $d=3$. Then for $u\in C^\infty_c(\R^3)$ and $\vb\in C^\infty_c(\R^3;\R^3)$,
\begin{equation}\label{curlgradeq0}
    \mathcal{C}^{\nub}_w \mathcal{G}^{\nub}_w u(\xb)=0,\quad\text{a.e.}\ \xb\in\R^3,
\end{equation} and
\begin{equation}\label{divcurleq0}
    \mathcal{D}^{\nub}_w \mathcal{C}^{\nub}_w \vb(\xb)=0,\quad\text{a.e.}\ \xb\in\R^3.
\end{equation}
\end{lem}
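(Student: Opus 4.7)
The plan is to prove both identities by passing to the Fourier side and invoking \Cref{lem:FT}, where the composition of two half-ball operators corresponds to iterating multiplication against the Fourier symbol $\bm\lambda_w^{\nub}$. Concretely, for \cref{curlgradeq0} the Fourier transform gives
\[
\mathcal{F}\bigl(\mathcal{C}^{\nub}_w \mathcal{G}^{\nub}_w u\bigr)(\bm\xi) = \bm\lambda_w^{\nub}(\bm\xi)\times\mathcal{F}(\mathcal{G}^{\nub}_w u)(\bm\xi) = \bm\lambda_w^{\nub}(\bm\xi)\times\bigl(\bm\lambda_w^{\nub}(\bm\xi)\,\hat u(\bm\xi)\bigr)=\bm 0,
\]
since $\ab\times\ab=\bm 0$, while for \cref{divcurleq0}
\[
\mathcal{F}\bigl(\mathcal{D}^{\nub}_w \mathcal{C}^{\nub}_w \vb\bigr)(\bm\xi) = \bm\lambda_w^{\nub}(\bm\xi)^T \bigl(\bm\lambda_w^{\nub}(\bm\xi)\times\hat{\vb}(\bm\xi)\bigr) = \bm\lambda_w^{\nub}(\bm\xi)\cdot\bigl(\bm\lambda_w^{\nub}(\bm\xi)\times\hat{\vb}(\bm\xi)\bigr)=0,
\]
by the standard triple product identity (understood with complex components as noted at the end of Section 2.1). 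Injectivity of the Fourier transform on $L^2$ then yields the a.e.\ identities.

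Before carrying this out, I have to justify that \Cref{lem:FT} applies to the intermediate quantities $\mathcal{G}^{\nub}_w u$ and $\mathcal{C}^{\nub}_w \vb$, which requires them to lie in $H^1$. This is exactly what \Cref{lem:opforsmoothfct} delivers: for $u\in C^\infty_c(\R^3)$, applying \cref{Lpestgradsmooth} with $p=2$ and with $|\alpha|=0,1$ gives both $\mathcal{G}^{\nub}_w u\in L^2(\R^3;\R^3)$ and $\nabla\mathcal{G}^{\nub}_w u\in L^2(\R^3;\R^{3\times 3})$, so $\mathcal{G}^{\nub}_w u\in H^1(\R^3;\R^3)$; the analogous argument using \cref{Lpestcurlsmooth} places $\mathcal{C}^{\nub}_w \vb$ in $H^1(\R^3;\R^3)$ when $\vb\in C^\infty_c(\R^3;\R^3)$. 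Consequently, \cref{eq:fourierhbgrad}--\cref{eq:fourierhbcurl} can be applied both to $u$ (resp.\ $\vb$) and to the first-stage output, which is what the computation above requires.

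No step looks genuinely hard: the two Fourier computations are just componentwise vector algebra, and the regularity bookkeeping is immediate from the $L^p$ estimates already proved. The only place one must be a little careful is matching conventions—making sure that for scalar $u$ the formula \cref{eq:fourierhbgrad} really says $\mathcal{F}(\mathcal{G}^{\nub}_w u)=\bm\lambda_w^{\nub}\hat u$ (the $N=1$ case of the tensor product), and that the matrix--vector expression $(\bm\lambda_w^{\nub})^T(\bm\lambda_w^{\nub}\times\hat{\vb})$ in the Fourier image of $\mathcal{D}^{\nub}_w \mathcal{C}^{\nub}_w \vb$ coincides with the scalar triple product. Once these are aligned, both identities follow in one line each.
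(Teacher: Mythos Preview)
Your proposal is correct and follows essentially the same approach as the paper: use \Cref{lem:opforsmoothfct} to place $\mathcal{G}^{\nub}_w u$ and $\mathcal{C}^{\nub}_w \vb$ in $H^1$, then apply \Cref{lem:FT} twice so that the identities reduce to $\bm\lambda_w^{\nub}\times(\bm\lambda_w^{\nub}\hat u)=0$ and $(\bm\lambda_w^{\nub})^T(\bm\lambda_w^{\nub}\times\hat{\vb})=0$. Your write-up is in fact slightly more explicit than the paper's about the $H^1$ verification and the convention-matching.
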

\begin{proof}
First note that by \Cref{lem:opforsmoothfct}, $\mathcal{G}^{\nub}_w u\in H^1(\R^3;\R^3)$ and $\mathcal{C}^{\nub}_w \vb\in H^1(\R^3;\R^3)$. Then the conditions for \Cref{lem:FT} hold and one can apply the Fourier transform to $L^2$ functions $\mathcal{C}^{\nub}_w \mathcal{G}^{\nub}_w u$ and $\mathcal{D}^{\nub}_w \mathcal{C}^{\nub}_w \vb$. By \Cref{lem:FT}, \cref{curlgradeq0} and \cref{divcurleq0} follows from \[\bm\lambda_w^{\nub}\times(\bm\lambda_w^{\nub}\hat{u})=0\] and \[(\bm\lambda_w^{\nub})^T(\bm\lambda_w^{\nub}\times\hat{\vb})=0\]respectively.
\end{proof}

Next, we show two nonlocal vector identities analogous to the following vector calculus identities in local setting\footnote{In \cref{veccalid1loc}, the two types of curls in 2D are defined as  
\begin{equation*}
    \mathrm{Curl}\ \vb:=\frac{\partial v_2}{\partial x_1}-\frac{\partial v_1}{\partial x_2} \text{ and } \textbf{Curl}\ \phi:=\left(\frac{\partial \phi}{\partial x_2},-\frac{\partial \phi}{\partial x_1}\right)^T,
\end{equation*}
for a vector field $\vb$ and a scalar field $\phi$. In \cref{veccalid2loc}, the curl of a vector field $\vb$ in 3D is defined as 
\[\textbf{Curl}\ \vb:=\left(\frac{\partial v_3}{\partial x_2}-\frac{\partial v_2}{\partial x_3},\frac{\partial v_1}{\partial x_3}-\frac{\partial v_1}{\partial x_3},\frac{\partial v_2}{\partial x_1}-\frac{\partial v_1}{\partial x_2}\right)^T.\]}:

\begin{equation}\label{veccalid1loc}
    \nabla\cdot(\nabla\vb)=\nabla(\nabla\cdot \vb)-\textbf{Curl}\ \mathrm{Curl}\ \vb,\quad d=2;
\end{equation}
\begin{equation}\label{veccalid2loc}
    \nabla\cdot(\nabla\vb)=\nabla(\nabla\cdot \vb)-\textbf{Curl}\ \textbf{Curl}\ \vb,\quad d=3.
\end{equation}

\begin{lem}\label{lem:veccalid1}
For $\ub\in C^\infty_c(\R^2;\R^2)$,
\begin{equation}\label{veccalid1}
    \mathcal{D}_w^{-\nub}\mathcal{G}_w^{\nub} \ub  =\mathcal{G}_w^{\nub} \mathcal{D}_w^{-\nub} \ub-
    \begin{pmatrix}
    0 & 1\\
    -1 & 0
    \end{pmatrix}
    \mathcal{G}_w^{-\nub} \mathcal{D}_w^{\nub} \left[\begin{pmatrix}
    0 & 1\\
    -1 & 0
    \end{pmatrix}\ub\right].
\end{equation}
 \end{lem}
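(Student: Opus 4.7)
The plan is to prove the identity by passing to Fourier space and reducing it to an elementary algebraic identity in $\C^2$. Since $\ub\in C^\infty_c(\R^2;\R^2)$, repeated application of \Cref{lem:opforsmoothfct} guarantees that every composition that appears in \eqref{veccalid1} (such as $\mathcal{G}_w^{\nub}\ub$, $\mathcal{D}_w^{-\nub}\ub$, $\mathcal{D}_w^{\nub}(J\ub)$ and so on) is a $C^\infty$ function whose derivatives of every order lie in $L^p(\R^2)$ for all $p\in[1,\infty]$; in particular all the iterated compositions belong to $H^1(\R^2;\R^2)$, so that the Fourier formulas of \Cref{lem:FT} may be applied as many times as needed.

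Writing $J=\begin{pmatrix}0&1\\-1&0\end{pmatrix}$ and using that $\mathcal{F}(J\ub)=J\hat\ub$, \Cref{lem:FT} gives, after the obvious tensor and transpose manipulations,
\begin{align*}
\mathcal{F}\bigl(\mathcal{D}_w^{-\nub}\mathcal{G}_w^{\nub}\ub\bigr)(\bm\xi)&=\bigl(\bm\lambda_w^{-\nub}\cdot\bm\lambda_w^{\nub}\bigr)\hat\ub,\\
\mathcal{F}\bigl(\mathcal{G}_w^{\nub}\mathcal{D}_w^{-\nub}\ub\bigr)(\bm\xi)&=\bigl(\bm\lambda_w^{-\nub}\cdot\hat\ub\bigr)\bm\lambda_w^{\nub},\\
\mathcal{F}\bigl(J\mathcal{G}_w^{-\nub}\mathcal{D}_w^{\nub}[J\ub]\bigr)(\bm\xi)&=\bigl(\bm\lambda_w^{\nub}\cdot J\hat\ub\bigr)J\bm\lambda_w^{-\nub},
\end{align*}
where throughout the dot product of two vectors in $\C^2$ is the bilinear one, $\pb\cdot\qb=\pb^T\qb$. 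Taking inverse Fourier transform, the identity \eqref{veccalid1} is equivalent to showing that for every $\bm\xi\in\R^2$,
\begin{equation*}
\bigl(\bm\lambda_w^{-\nub}\cdot\bm\lambda_w^{\nub}\bigr)\hat\ub=\bigl(\bm\lambda_w^{-\nub}\cdot\hat\ub\bigr)\bm\lambda_w^{\nub}-\bigl(\bm\lambda_w^{\nub}\cdot J\hat\ub\bigr)J\bm\lambda_w^{-\nub}.
\end{equation*}

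This is a purely algebraic identity of the form
\begin{equation*}
(\bb\cdot\aab)\xib=(\bb\cdot\xib)\aab-(\aab\cdot J\xib)J\bb,\qquad\aab,\bb,\xib\in\C^2,
\end{equation*}
which is the two-dimensional analogue of the classical triple product identity $\aab\times(\bb\times\xib)=\bb(\aab\cdot\xib)-\xib(\aab\cdot\bb)$. A direct component-wise expansion using $\aab\cdot J\xib=a_1\xi_2-a_2\xi_1$ verifies it: indeed one checks that $\aab(\bb\cdot\xib)-\xib(\aab\cdot\bb)=(a_1\xi_2-a_2\xi_1)J\bb$, which is exactly the needed relation after a relabeling. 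Specializing to $\aab=\bm\lambda_w^{\nub}$, $\bb=\bm\lambda_w^{-\nub}$, $\xib=\hat\ub$ closes the proof.

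The only real care required is the bookkeeping of tensors, transposes and the two inequivalent interpretations of $\ub$: in the term $\mathcal{D}_w^{-\nub}\ub$ the vector $\ub$ is viewed as a $\R^{d\times 1}$-valued matrix (so $\mathcal{D}_w^{-\nub}\ub$ is scalar-valued), whereas in $\mathcal{G}_w^{\nub}\ub$ the vector $\ub$ is $\R^N$-valued with $N=d=2$ (so the gradient is $\R^{2\times 2}$-valued). No genuine analytic obstacle arises, since the Fourier symbol framework of \Cref{lem:FT,lem:lambdaexplicitform} together with \Cref{lem:opforsmoothfct} reduces everything to a $2\times 2$ linear-algebraic check.
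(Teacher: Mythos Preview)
Your proof is correct and follows essentially the same approach as the paper's own proof: both apply the Fourier transform via \Cref{lem:FT} (justified by \Cref{lem:opforsmoothfct}) and reduce \eqref{veccalid1} to a pointwise algebraic identity in $\C^2$. The only cosmetic difference is that the paper verifies the identity by a direct $2\times 2$ matrix expansion of $\bm\lambda_w^{\nub}\bm\lambda_w^{-\nub\,T}\hat\ub - J\bm\lambda_w^{-\nub}\bm\lambda_w^{\nub\,T}J\hat\ub$, whereas you phrase it as the abstract two-dimensional ``triple product'' identity $(\bb\cdot\ab)\xib=(\bb\cdot\xib)\ab-(\ab\cdot J\xib)J\bb$ before specializing.
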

 \begin{proof} As remarked at the beginning of the proof of \Cref{lem:vanishidentity}, it is valid to apply the Fourier transform. Applying the Fourier transform and \Cref{lem:FT},  the left hand side of \cref{veccalid1} becomes $-|\bm\lambda_w^{\nub}(\xib)|^2\hat{\ub}(\xib)$ and the right hand side becomes 
\begin{align*}
    &\quad\ \bm\lambda_w^{\nub} (\xib) \bm\lambda_w^{-\nub} (\xib)^T \hat{\ub}(\xib)-
    \begin{pmatrix}
    0 & 1\\
    -1 & 0
    \end{pmatrix}
    \bm\lambda_w^{-\nub} (\xib)\bm\lambda_w^{\nub} (\xib)^T \begin{pmatrix}
    0 & 1\\
    -1 & 0
    \end{pmatrix} \hat{\ub}(\xib)\\
    &=\bigg[-\begin{pmatrix}
    \lambda_1(\xib)\overline{\lambda_1(\xib)} & \lambda_1(\xib)\overline{\lambda_2(\xib)}\\
    \lambda_2(\xib)\overline{\lambda_1(\xib)} & \lambda_2(\xib)\overline{\lambda_2(\xib)}
    \end{pmatrix}\\
    &\qquad\qquad\qquad\quad +\begin{pmatrix}
    0 & 1\\
    -1 & 0
    \end{pmatrix}\begin{pmatrix}
    \overline{\lambda_1(\xib)}\lambda_1(\xib) & \overline{\lambda_1(\xib)}\lambda_2(\xib)\\
    \overline{\lambda_2(\xib)}\lambda_1(\xib) & \overline{\lambda_2(\xib)}\lambda_2(\xib)
    \end{pmatrix}\begin{pmatrix}
    0 & 1\\
    -1 & 0
    \end{pmatrix}\bigg]\hat{\ub}(\xib)
    \\
    &=\left[-\begin{pmatrix}
    \lambda_1(\xib)\overline{\lambda_1(\xib)} & \lambda_1(\xib)\overline{\lambda_2(\xib)}\\
    \lambda_2(\xib)\overline{\lambda_1(\xib)} & \lambda_2(\xib)\overline{\lambda_2(\xib)}
    \end{pmatrix}+\begin{pmatrix}
    -\overline{\lambda_2(\xib)}\lambda_2(\xib) & \overline{\lambda_2(\xib)}\lambda_1(\xib)\\
    \overline{\lambda_1(\xib)}\lambda_2(\xib) & -\overline{\lambda_1(\xib)}\lambda_1(\xib)
    \end{pmatrix}\right]\hat{\ub}(\xib)\\
    &=-|\bm\lambda_w^{\nub} (\xib)|^2\hat{\ub}(\xib).
\end{align*}
Therefore, \cref{veccalid1}
 holds for $\ub\in C_c^\infty(\R^2;\R^2)$.
 \end{proof}
 
 \begin{lem}\label{lem:veccalid2}
For $\ub\in C^\infty_c(\R^3;\R^3)$,
\begin{equation}\label{veccalid2}
     \mathcal{D}_w^{-\nub}\mathcal{G}_w^{\nub} \ub =\mathcal{G}_w^{\nub} \mathcal{D}_w^{-\nub} \ub-\mathcal{C}_{w}^{-\nub}\mathcal{C}_w^{\nub} \ub.
\end{equation}
\end{lem}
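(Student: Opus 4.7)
The plan is to imitate the Fourier-side strategy used for \Cref{lem:veccalid1} and reduce \cref{veccalid2} to an algebraic identity in $\C^3$. First I would check that Fourier transforms are meaningful for every term: for $\ub\in C^\infty_c(\R^3;\R^3)$, \Cref{lem:opforsmoothfct} gives $\mathcal{G}_w^{\nub}\ub\in H^1(\R^3;\R^{3\times 3})$ and $\mathcal{C}_w^{\nub}\ub\in H^1(\R^3;\R^3)$, and then \Cref{lem:opforW1pfct} shows that each of the three compositions appearing in \cref{veccalid2} lies in $L^2(\R^3;\R^3)$. This places all three objects inside the scope of \Cref{lem:FT}.

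Next I would apply the symbol formulas \eqref{eq:fourierhbgrad}--\eqref{eq:fourierhbcurl}. Writing $\bm\lambda:=\bm\lambda_w^{\nub}(\bm\xi)$ and using $\bm\lambda_w^{-\nub}(\bm\xi)=-\overline{\bm\lambda}$, \Cref{lem:FT} yields
\begin{align*}
\mathcal{F}(\mathcal{D}_w^{-\nub}\mathcal{G}_w^{\nub}\ub)(\bm\xi) &= -|\bm\lambda|^{2}\,\hat{\ub}(\bm\xi),\\
\mathcal{F}(\mathcal{G}_w^{\nub}\mathcal{D}_w^{-\nub}\ub)(\bm\xi) &= -\bigl(\overline{\bm\lambda}^{T}\hat{\ub}(\bm\xi)\bigr)\,\bm\lambda,\\
\mathcal{F}(\mathcal{C}_w^{-\nub}\mathcal{C}_w^{\nub}\ub)(\bm\xi) &= -\overline{\bm\lambda}\times\bigl(\bm\lambda\times\hat{\ub}(\bm\xi)\bigr).
\end{align*}
With these in hand, \cref{veccalid2} collapses to the pointwise algebraic claim
\[
-|\bm\lambda|^{2}\hat{\ub} \;=\; -\bigl(\overline{\bm\lambda}^{T}\hat{\ub}\bigr)\bm\lambda \;+\; \overline{\bm\lambda}\times(\bm\lambda\times\hat{\ub}),
\]
which the last step recognizes as the triple-product identity $\ab\times(\bb\times\cb)=\bb(\ab\cdot\cb)-\cb(\ab\cdot\bb)$ applied componentwise in $\C^3$ with $\ab=\overline{\bm\lambda}$, $\bb=\bm\lambda$, $\cb=\hat{\ub}$, together with $\overline{\bm\lambda}\cdot\bm\lambda=|\bm\lambda|^{2}$.

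I do not anticipate any substantive obstacle: the argument is essentially bookkeeping, parallel to the 2D proof. The only point that needs care is treating the dot and cross products in the symbol calculation as purely algebraic operations on $\C^3$ (i.e., no extra conjugation), consistent with the convention implicit in the proof of \Cref{lem:veccalid1}. Structurally, the BAC-CAB step is the 3D analogue of the $2\times 2$ rotation-matrix manipulation that closes the 2D case, and it plays the role of the nonlocal counterpart of the classical identity \cref{veccalid2loc}.
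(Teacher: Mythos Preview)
Your proposal is correct and matches the paper's own proof essentially line for line: both take Fourier transforms, invoke the symbol formulas of \Cref{lem:FT} together with $\bm\lambda_w^{-\nub}=-\overline{\bm\lambda_w^{\nub}}$, and close with the BAC--CAB identity $\ab\times(\bb\times\cb)=\bb(\ab\cdot\cb)-\cb(\ab\cdot\bb)$ in $\C^3$ (the paper writes this expansion explicitly without naming it). Your extra care in first certifying that each composition lies in $L^2$ via \Cref{lem:opforsmoothfct} and \Cref{lem:opforW1pfct} is appropriate, and your remark about treating the dot and cross products as bilinear (no conjugation) is exactly the point needed for the algebra to go through.
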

\begin{proof}
Applying the Fourier transform to \cref{veccalid2} and using \Cref{lem:FT} yield
\begin{align*}
    &\quad\ \mathcal{F}(\mathcal{G}_w^{\nub} \mathcal{D}_w^{-\nub} \ub-\mathcal{C}_{w}^{-\nub} \mathcal{C}_w^{\nub}\ub)(\xib)\\
    &=\bm\lambda_w^{\nub} (\xib) \bm\lambda_w^{-\nub} (\xib)^T \hat{\ub}(\xib)-\bm\lambda_w^{-\nub} (\xib)\times(\bm\lambda_w^{\nub}(\xib)\times\hat{\ub}(\xib))\\
    &=\bm\lambda_w^{\nub} (\xib) \bm\lambda_w^{-\nub} (\xib)^T \hat{\ub}(\xib)-(\bm\lambda_w^{-\nub} (\xib)^T\hat{\ub}(\xib))\bm\lambda_w^{\nub}(\xib)+\bm\lambda_w^{-\nub} (\xib)^T\bm\lambda_w^{\nub}(\xib)\hat{\ub}(\xib)\\
    &=-|\bm\lambda_w^{\nub} (\xib)|^2\hat{\ub}(\xib)=\mathcal{F}(\mathcal{D}_w^{-\nub}(\mathcal{G}_w^{\nub} \ub))(\xib),
\end{align*}
where we used $\bm\lambda_w^{-\nub}(\xib)=-\overline{\bm\lambda_w^{\nub} (\xib)}$.
\end{proof}

\section{Nonlocal Sobolev-type  spaces}
\label{sec:space}
In this section, we define the nonlocal Sobolev-type spaces in which we prove the Poincar\'e inequality. The notion is defined via the distributional nonlocal gradient introduced in the previous section, motivated by the definition of classical Sobolev spaces. A similar notion was introduced in \cite{Stefani20} for fractional gradient.
For simplicity, we only consider the case $p=2$, while the definitions and results in \Cref{subsec:def_space} below can be extended to a general $p\in [1,\infty)$.

\subsection{Definitions and properties of nonlocal Sobolev-type spaces}
\label{subsec:def_space}
For the rest of the paper, we adopt the convention that a domain is an open connected set (not necessarily bounded). Let $\Omega\subset \mathbb{R}^d$ be a domain and $N\in \Z^+ $ a positive integer. 
Given a kernel function $w$ satisfying \cref{eq:kernelassumption} and a unit vector $\nub\in\R^d$, define the associated energy space $\cS_w^{\nub}(\Omega;\R^N)$ by
\begin{equation}\label{deffctsp_2}
    \cS_w^{\nub}(\Omega;\R^N):=\{\ub\in L^2(\mathbb{R}^d;\R^N): \ub=\bm{0} \text{ a.e. on }\R^d\backslash\Omega, \; \mathfrak{G}^{\nub}_w \ub\in L^2(\R^d;\R^{d\times N})\},
\end{equation}
equipped with norm \[\|\ub\|_{\cS_w^{\nub}(\Om;\R^N)} :=\left(\|\ub\|_{L^2(\R^d;\R^N)}^2+\|\mathfrak{G}^{\nub}_w \ub\|_{L^2(\R^d;\R^{d\times N})}^2\right)^{1/2},\]
as well as the corresponding inner product. 
For any $\Om\subset\R^d$, it is not hard to see that $\cS_w^{\nub}(\Om;\R^N)$ is a closed subspace of $\cS_w^{\nub}(\R^d;\R^N)$.
When $N=1$, we simply denote $\cS_w^{\nub}(\Om):=\cS_w^{\nub}(\Om;\R)$. Notice that any function in $\cS_w^{\nub}(\Om;\R^N)$ is a vector field where each component of it is a function in $\cS_w^{\nub}(\Om)$. 
For the rest of this section, we will show $\cS_w^{\nub}(\Om;\R^N)$ is a separable Hilbert space for certain domain $\Omega$.
Since functions in
$\cS_w^{\nub}(\Om;\R^N)$ can be understood componentwise as functions in $\cS_w^{\nub}(\Om)$, we will work with $\cS_w^{\nub}(\Om)$ for the rest of this subsection and the following results also hold for $\cS_w^{\nub}(\Om;\R^N)$ where $N\in\Z^+$. 
The results of this subsection can also be easily extended to a general $p\in [1,\infty)$.

\begin{rem}
    $\cS_w^{\nub}(\Om)$ is a nonlocal analogue of the Sobolev space $H_0^1(\Om)$.
 If the kernel function $w$ has compact support, e.g., $\supp\ w \subset B_\del(\bm{0})$ for $\del>0$, then $\mathfrak{G}_w^{\nub}u$ vanishes outside $\Om_\del:=\{\xb\in \R^d:\dist(\xb,\Om)<\del \}$. In this case, we may equivalently define $\cS_w^{\nub}(\Om)$ as functions in $L^2(\Om_{2\del})$ that vanish on $\Om_{2\del}\backslash\Om$ with $\mathfrak{G}_w^{\nub}u\in L^2(\Om_\del;\R^d)$.   
\end{rem}

\begin{thm}\label{thm:fctspcmplt} 
Let $\Om\subset\R^d$ be a domain.
The function space $\cS_w^{\nub}(\Omega)$ is a Hilbert space.
\end{thm}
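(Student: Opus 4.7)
The plan is to verify completeness, as the inner product structure is immediate from the definition of the norm via polarization. I would start with an arbitrary Cauchy sequence $\{u_n\} \subset \cS_w^{\nub}(\Om)$. Since the norm on $\cS_w^{\nub}(\Om)$ is the Hilbert sum of the $L^2$ norms of $u_n$ and $\mathfrak{G}_w^{\nub} u_n$, both $\{u_n\}$ is Cauchy in $L^2(\R^d)$ and $\{\mathfrak{G}_w^{\nub} u_n\}$ is Cauchy in $L^2(\R^d;\R^d)$. By completeness of the ambient $L^2$ spaces, there exist $u \in L^2(\R^d)$ and $\vb \in L^2(\R^d;\R^d)$ with $u_n \to u$ and $\mathfrak{G}_w^{\nub} u_n \to \vb$ in $L^2$. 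The support condition $u = 0$ a.e.\ on $\R^d \setminus \Om$ persists in the limit by passing to an a.e.-convergent subsequence.

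The crux is to identify $\mathfrak{G}_w^{\nub} u = \vb$ as distributions, which will place $u$ in $\cS_w^{\nub}(\Om)$ and yield convergence in the graph norm. For any test field $\phib \in C_c^\infty(\R^d;\R^d)$, the definition of the distributional nonlocal gradient gives
\begin{equation*}
\langle \mathfrak{G}_w^{\nub} u_n, \phib\rangle = -\int_{\R^d} u_n(\xb)\, \mathcal{D}_w^{-\nub}\phib(\xb)\, d\xb.
\end{equation*}
By \Cref{lem:opforsmoothfct}, $\mathcal{D}_w^{-\nub}\phib$ is smooth with $L^p$ bounds for every $p\in [1,\infty]$; in particular, $\mathcal{D}_w^{-\nub}\phib \in L^2(\R^d)$. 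Hence Cauchy--Schwarz together with $u_n \to u$ in $L^2(\R^d)$ yields
\begin{equation*}
\int_{\R^d} u_n\, \mathcal{D}_w^{-\nub}\phib\, d\xb \longrightarrow \int_{\R^d} u\, \mathcal{D}_w^{-\nub}\phib\, d\xb = -\langle \mathfrak{G}_w^{\nub} u, \phib\rangle,
\end{equation*}
while $\phib \in L^2(\R^d;\R^d)$ and $\mathfrak{G}_w^{\nub} u_n \to \vb$ in $L^2$ give
\begin{equation*}
\langle \mathfrak{G}_w^{\nub} u_n, \phib\rangle = \int_{\R^d} \mathfrak{G}_w^{\nub} u_n \cdot \phib\, d\xb \longrightarrow \int_{\R^d} \vb \cdot \phib\, d\xb.
\end{equation*}
Equating the two limits shows $\langle \mathfrak{G}_w^{\nub} u,\phib\rangle = \int_{\R^d} \vb \cdot \phib\, d\xb$ for every admissible $\phib$, so $\mathfrak{G}_w^{\nub} u = \vb$ as distributions; since $\vb \in L^2$, this gives $u \in \cS_w^{\nub}(\Om)$ with $u_n \to u$ in $\cS_w^{\nub}(\Om)$.

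The only subtlety is passing the duality pairing to the limit, which hinges on the $L^2$-integrability of $\mathcal{D}_w^{-\nub}\phib$ for compactly supported smooth $\phib$. This is supplied by the estimates in \Cref{lem:opforsmoothfct}, so no additional analytic input is needed beyond results already established in \Cref{sec:operators}, and the argument is otherwise the standard completeness scheme for graph-norm spaces.
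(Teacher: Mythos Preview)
Your proof is correct and follows essentially the same route as the paper: take a Cauchy sequence, extract $L^2$ limits of $u_n$ and $\mathfrak{G}_w^{\nub} u_n$, and identify the distributional gradient of the limit by pairing against test fields and invoking \Cref{lem:opforsmoothfct} to ensure $\mathcal{D}_w^{-\nub}\phib\in L^2$. The only cosmetic difference is that the paper first proves completeness of $\cS_w^{\nub}(\R^d)$ and then observes $\cS_w^{\nub}(\Om)$ is a closed subspace, whereas you work directly in $\cS_w^{\nub}(\Om)$ and verify the support condition by passing to an a.e.-convergent subsequence.
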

\begin{proof}
It suffices to prove that $\cS_w^{\nub}(\R^d)$ is complete. Let $\{u_k\}_{k\in\mathbb{N}}$ be a Cauchy sequence in $\cS_w^{\nub}(\R^d)$. Since $\{u_k\}_{k\in\mathbb{N}}$ is a Cauchy sequence in $L^2(\R^d)$, there exists $u\in L^2(\R^d)$ such that $u_k\to u$ in $L^2(\R^d)$ and $\vb\in L^2(\R^d;\R^d)$ such that $\mathfrak{G}^{\nub}_w u_k\to \vb$ in $L^2(\R^d;\R^d)$. Now we show $\mathfrak{G}^{\nub}_w u=\vb$ in the sense of distributions. By definition, for any $\bm\phi\in C^\infty_c(\R^d;\mathbb{R}^d)$, it suffices to show 
\begin{equation}\label{eq:1}
    -\int_{\R^d} u(\xb)\mathcal{D}^{-\nub}_w \bm\phi(\xb) d\xb=\int_{\R^d}\vb(\xb)\cdot\bm\phi(\xb)d\xb.
\end{equation}
For $k\in\mathbb{N}$, we have
\begin{equation}\label{eq:2}
    -\int_{\R^d} u_k(\xb)\mathcal{D}^{-\nub}_w \bm\phi(\xb) d\xb=\int_{\R^d}\mathfrak{G}^{\nub}_w u_k(\xb)\cdot\bm\phi(\xb)d\xb.
\end{equation}
Since $\bm\phi\in C^\infty_c(\R^d;\mathbb{R}^d)$, by \Cref{lem:opforsmoothfct}, we know $\mathcal{D}^{-\nub}_w \bm\phi\in L^2(\R^d)$. Then taking $k$ to infinity in \eqref{eq:2} yields \eqref{eq:1}. Thus, $\mathfrak{G}^{\nub}_w u=\vb\in L^2(\R^d;\R^d)$ and $u_k\to u$ in $\cS_w^{\nub}(\R^d)$. Hence, $\cS_w^{\nub}(\R^d)$ is a Hilbert space. Since $\cS_w^{\nub}(\Om)$ is a closed subspace of $\cS_w^{\nub}(\R^d)$, the normed space $\cS_w^{\nub}(\Om)$ is also complete. 
\end{proof}

We next present a density result on $\cS_w^{\nub}(\Omega)$ which is crucial in many applications.
If $\Om\neq \R^d$, the density result holds for domains that are bounded with continuous boundaries or epigraphs. We say $\Om$ is an epigraph if there exists a continuous function $\zeta:\R^{d-1}\to\R$ such that (up to a  rigid motion),
\[ \Om = \{ \xb=(\xb',x_d)\in\R^d\,|\, x_d > \zeta(\xb')\}.\]
If $\Om$ is a bounded domain with a continuous boundary, then its boundary can be covered by finitely many balls where each patch is characterized by an epigraph. 

\begin{thm}\label{thm:density}
Let $\Om$ be a bounded domain with a continuous boundary, an epigraph, or $\R^d$. Let $C_c^\infty(\Om)$ denote the space of smooth functions defined on $\R^d$ with compact support contained in $\Om$. Then $C_c^\infty(\Om)$ is dense in $\cS_w^{\nub}(\Omega)$.
\end{thm}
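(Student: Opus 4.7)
The plan is to build three ancillary tools and then combine them case by case. The ingredients are: (M) if $\rho_\epsilon$ is a standard mollifier then $u*\rho_\epsilon\in C^\infty(\R^d)\cap\cS_w^{\nub}(\R^d)$ and $u*\rho_\epsilon\to u$ in $\cS_w^{\nub}(\R^d)$ as $\epsilon\to 0$, obtained from the identity $\mathfrak{G}_w^{\nub}(u*\rho_\epsilon)=(\mathfrak{G}_w^{\nub} u)*\rho_\epsilon$, which follows by testing against $\bm\psi\in C_c^\infty(\R^d;\R^d)$ and noting that $\mathcal{D}_w^{-\nub}$ commutes with convolution on $C_c^\infty$ by Fubini; (T) translation is continuous in $\cS_w^{\nub}(\R^d)$, which is immediate from the commutation $\mathfrak{G}_w^{\nub}\tau_h=\tau_h\mathfrak{G}_w^{\nub}$ combined with standard $L^2$ translation continuity; (C) for any $\phi\in C_c^\infty(\R^d)$ and $u\in\cS_w^{\nub}(\R^d)$, the product $\phi u$ lies in $\cS_w^{\nub}(\R^d)$ via a distributional Leibniz rule
\[
\mathfrak{G}_w^{\nub}(\phi u)(\xb)=\phi(\xb)\,\mathfrak{G}_w^{\nub} u(\xb)+R_\phi u(\xb),
\]
where
\[
R_\phi u(\xb):=\int_{\R^d}\chi_{\nub}(\yb-\xb)\tfrac{\yb-\xb}{|\yb-\xb|}u(\yb)(\phi(\yb)-\phi(\xb))w(\yb-\xb)d\yb,
\]
and the $L^2$-bound on $R_\phi u$ follows from splitting the integrand into $|\yb-\xb|\le 1$ (where $|\phi(\yb)-\phi(\xb)|\le \|\nabla\phi\|_{\infty}|\yb-\xb|$ yields via Minkowski a bound in terms of $M_w^1\|\nabla\phi\|_\infty \|u\|_{L^2}$) and $|\yb-\xb|>1$ (where $|\phi(\yb)-\phi(\xb)|\le 2\|\phi\|_\infty$ yields a bound in terms of $M_w^2\|\phi\|_\infty\|u\|_{L^2}$).

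With (M), (T), (C) at hand, I treat the case $\Omega=\R^d$ first. Given $u\in\cS_w^{\nub}(\R^d)$, take a cutoff sequence $\phi_R\in C_c^\infty(\R^d)$ with $\phi_R\equiv 1$ on $B_R$ and $|\nabla\phi_R|\le 2$, and set $u_{R,\epsilon}:=\phi_R(u*\rho_\epsilon)\in C_c^\infty(\R^d)$. Sending $\epsilon\to 0$ by (M) and then $R\to\infty$ by (C) (with dominated convergence applied to the explicit expressions for $\phi_R\mathfrak{G}_w^{\nub}(u*\rho_\epsilon)$ and $R_{\phi_R}(u*\rho_\epsilon)$) yields an approximating sequence. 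For the epigraph case $\Omega=\{x_d>\zeta(\xb')\}$, I define $u_h(\xb):=u(\xb-h\eb_d)$; since $u=0$ on $\R^d\setminus\Omega$, one has $u_h=0$ on $\{x_d\le\zeta(\xb')+h\}$, and by (T), $u_h\to u$ in $\cS_w^{\nub}(\R^d)$ as $h\to 0^+$. Choosing $\epsilon=\epsilon(h)$ small enough that $\epsilon+\omega_\zeta(\epsilon)<h$, where $\omega_\zeta$ is a uniform modulus of continuity of $\zeta$ (this is where continuity of $\partial\Omega$ enters), ensures that $u_h*\rho_\epsilon\in C^\infty(\R^d)$ vanishes on $\R^d\setminus\Omega$. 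A final truncation by $\phi_R$ as in the $\R^d$ case produces the required $C_c^\infty(\Omega)$ approximation.

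The bounded-domain case is reduced to the two previous ones by a finite partition of unity $\{\vartheta_j\}_{j=0}^J$ subordinate to a cover $\{U_j\}$ in which $U_0\Subset\Omega$ and each $U_j\cap\Omega$ for $j\ge 1$ is, after a rigid motion, an epigraph patch. By (C) each $\vartheta_j u$ lies in $\cS_w^{\nub}(\R^d)$; $\vartheta_0 u$ is approximated by pure mollification as in the interior, while each $\vartheta_j u$ is approximated by the epigraph procedure in the rotated chart, translating along the local inward normal direction. Summation gives the claimed approximation of $u$.

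I anticipate the principal obstacle to be the verification of (C) for the non-integrable or non-compactly-supported kernels covered in \Cref{sec:Poincare_nonint_kernel}: the Leibniz commutator $R_\phi u$ must be shown to belong to $L^2$ uniformly, and the short-range/long-range split above must be carried out using only the moment conditions in \eqref{eq:kernelassumption}, with no additional integrability of $w$ near the origin. A secondary subtlety is coordinating the translation depth $h$ with the mollification scale $\epsilon$ through $\omega_\zeta$ in the epigraph step, which is exactly where the continuity hypothesis on $\partial\Omega$ is used and which also propagates into the bounded-domain step via the rectification of each boundary chart.
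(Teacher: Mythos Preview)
Your approach is essentially the paper's: your ingredients (M), (T), (C) are exactly the paper's Lemmas~3.3, 3.2, 3.1, and the partition-of-unity reduction for the bounded domain matches the paper's proof; your two-term Leibniz rule in (C) is in fact a cleaner packaging of the paper's three-term formula (the paper routes through the product rule for $\mathcal{D}_w^{-\nub}$ in Proposition~3.1 and then dualizes, whereas your $R_\phi u$ is verified directly by one Fubini swap, which is legitimate under \eqref{eq:kernelassumption}). One genuine caution: in the pure epigraph case you invoke a \emph{uniform} modulus of continuity $\omega_\zeta$, but the hypothesis only gives $\zeta$ continuous, so for an unbounded epigraph the inequality $\epsilon+\omega_\zeta(\epsilon)<h$ may not be achievable globally; the fix is to apply the cutoff $\phi_R$ \emph{before} the translate-and-mollify step (so that the support projects to a compact set on which $\zeta$ is uniformly continuous), rather than last as you wrote. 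This issue does not affect the bounded-domain case, where the partition of unity already localizes each $\vartheta_j u$ to a compact patch.
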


The main ingredients of the proof of \Cref{thm:density} are several lemmas stated below about cut-off, translation and mollification in nonlocal Sobolev spaces which we present in the following. 
First of all, a generalized `product rule' for the nonlocal operators is useful.
\begin{prop}\label{prop:productrulediv}
For $\varphi\in C^\infty_c(\R^d)$ and $\phib\in C^\infty_c(\R^d;\R^d)$,
\begin{equation}
    \mathcal{D}^{-\nub}_w (\varphi\phib)=\varphi\mathcal{D}^{-\nub}_w\phib+\mathcal{G}^{\nub}_w\varphi\cdot\phib+S(\varphi,\phib),
\end{equation}where $S(\varphi,\phib):\R^d\to\R$ is a function given by
\begin{equation}
    S(\varphi,\phib)(\xb):=\int_{\R^d}\frac{\yb-\xb}{|\yb-\xb|}\cdot(\chi_{\nub}(\xb-\yb)\phib(\yb)-\chi_{\nub}(\yb-\xb)\phib(\xb))(\varphi(\yb)-\varphi(\xb))w(\yb-\xb)d\yb .
\end{equation}
Similarly, \[\mathcal{G}^{\nub}_w(\varphi \psi)=\varphi \mathcal{G}^{\nub}_w\psi+\psi \mathcal{G}^{\nub}_w\varphi+S_{\mathcal{G}}(\varphi,\psi),\quad\forall\varphi,\psi\in C^\infty_c(\R^d),\]
    where \[S_{\mathcal{G}}(\varphi,\psi)(\xb):=\int_{\R^d}\chi_{\nub}(\zb)\frac{\zb}{|\zb|}(\varphi(\xb+\zb)-\varphi(\xb))(\psi(\xb+\zb)-\psi(\xb))w(\zb)d\zb,\quad\xb\in\R^d.\]
\end{prop}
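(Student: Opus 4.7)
The plan is to reduce both identities to direct algebraic manipulations on the integral representations of the nonlocal operators. Since $\varphi,\psi\in C^\infty_c(\R^d)$ and $\phib\in C^\infty_c(\R^d;\R^d)$, \Cref{lem:opforsmoothfct} lets me write $\mathcal{G}^{\nub}_w\varphi$, $\mathcal{D}^{-\nub}_w\phib$, $\mathcal{G}^{\nub}_w(\varphi\psi)$ and $\mathcal{D}^{-\nub}_w(\varphi\phib)$ as absolutely convergent Lebesgue integrals without principal value. Each integrand that appears below is dominated by a constant multiple of $\min(1,|\yb-\xb|)w(\yb-\xb)$ (using $\|\varphi\|_\infty$, $\|\nabla\varphi\|_\infty$ and the analogous bounds for $\phib,\psi$), hence integrable by \cref{eq:kernelassumption}, so regrouping terms inside each integral is unproblematic.

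For the divergence identity I would start from the no-principal-value form
\[
\mathcal{D}^{-\nub}_w(\varphi\phib)(\xb)=\int_{\R^d}\chi_{\nub}(\xb-\yb)\frac{\yb-\xb}{|\yb-\xb|}\cdot\bigl(\varphi(\yb)\phib(\yb)-\varphi(\xb)\phib(\xb)\bigr)w(\yb-\xb)\,d\yb,
\]
where I used $\chi_{-\nub}(\yb-\xb)=\chi_{\nub}(\xb-\yb)$, and apply the asymmetric splitting
\[
\varphi(\yb)\phib(\yb)-\varphi(\xb)\phib(\xb)=\varphi(\xb)\bigl(\phib(\yb)-\phib(\xb)\bigr)+\phib(\yb)\bigl(\varphi(\yb)-\varphi(\xb)\bigr).
\]
The $\varphi(\xb)$-piece produces exactly $\varphi(\xb)\mathcal{D}^{-\nub}_w\phib(\xb)$. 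To produce the claimed $\mathcal{G}^{\nub}_w\varphi(\xb)\cdot\phib(\xb)$ I would add and subtract the analogous integral with $\chi_{\nub}(\yb-\xb)$ and $\phib$ evaluated at $\xb$; what is added is precisely $\mathcal{G}^{\nub}_w\varphi(\xb)\cdot\phib(\xb)$, and the leftover combines the two competing half-space indicators into the factor $\chi_{\nub}(\xb-\yb)\phib(\yb)-\chi_{\nub}(\yb-\xb)\phib(\xb)$, which is exactly the coefficient in the definition of $S(\varphi,\phib)$.

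The gradient identity is cleaner because $\mathcal{G}^{\nub}_w(\varphi\psi)$, $\mathcal{G}^{\nub}_w\varphi$ and $\mathcal{G}^{\nub}_w\psi$ all carry the same factor $\chi_{\nub}(\yb-\xb)$. I would insert the symmetric Leibniz expansion
\[
\varphi(\yb)\psi(\yb)-\varphi(\xb)\psi(\xb)=\varphi(\xb)\bigl(\psi(\yb)-\psi(\xb)\bigr)+\psi(\xb)\bigl(\varphi(\yb)-\varphi(\xb)\bigr)+\bigl(\varphi(\yb)-\varphi(\xb)\bigr)\bigl(\psi(\yb)-\psi(\xb)\bigr)
\]
into \cref{Gwithoutpv}, identify the first two contributions with $\varphi(\xb)\mathcal{G}^{\nub}_w\psi(\xb)$ and $\psi(\xb)\mathcal{G}^{\nub}_w\varphi(\xb)$, and make the change of variable $\zb=\yb-\xb$ in the remaining double-difference integral to arrive at $S_{\mathcal{G}}(\varphi,\psi)(\xb)$.

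No step is a genuine obstacle; the only subtlety worth flagging is the bookkeeping of characteristic functions in the divergence identity, since $\mathcal{D}^{-\nub}_w$ carries $\chi_{\nub}(\xb-\yb)$ while $\mathcal{G}^{\nub}_w$ carries $\chi_{\nub}(\yb-\xb)$, and this mismatch is exactly what forces the remainder $S(\varphi,\phib)$ on top of the two Leibniz pieces. For the gradient identity there is no such mismatch, and the remainder $S_{\mathcal{G}}(\varphi,\psi)$ reduces to the familiar double-difference expression that also appears in the two-point nonlocal calculus.
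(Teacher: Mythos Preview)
Your proposal is correct and follows essentially the same algebraic route as the paper. The only cosmetic difference is that the paper carries the principal-value truncation $\R^d\setminus B_\epsilon(\xb)$ throughout (using the equivalent representation of $\mathcal{D}^{-\nub}_w$ from \Cref{rem:div_equiv}) and passes to the limit $\epsilon\to 0$ at the end, whereas you invoke \Cref{lem:opforsmoothfct} upfront to drop the principal value and work directly with absolutely convergent Lebesgue integrals; the add-and-subtract step producing $\mathcal{G}^{\nub}_w\varphi\cdot\phib$ and the remainder $S(\varphi,\phib)$ is identical in both.
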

\begin{proof}
We only prove the produce rule for $\mathcal{D}^{-\nub}_w$ as the product rule for $\mathcal{G}^{\nub}_w$ is similar and simpler.
First note that the function $S(\varphi,\phib)$ is well-defined with the pointwise estimate
\begin{equation}\label{eq:ptwsS}
    |S(\varphi,\phib)(\xb)|\le \int_{\R^d} 2\|\phib\|_{L^\infty(\R^d;\R^d)} \cdot 2 \|\varphi \|_{W^{1,\infty}(\R^d)}\min(1,|\yb-\xb|)w(\yb-\xb)d\yb<\infty.
\end{equation} Observe that for $\xb,\yb\in\R^d$ and $\epsilon>0$,
\begin{align*}
    &\ \quad\chi_{[\epsilon,\infty)}(|\yb-\xb|)\{\chi_{\nub}(\xb-\yb)\varphi(\yb)\phib(\yb)+\chi_{\nub}(\yb-\xb)\varphi(\xb)\phib(\xb)\}\\
    &=\resizebox{\hsize}{!}
{$\chi_{[\epsilon,\infty)}(|\yb-\xb|)\{[\chi_{\nub}(\xb-\yb)\phib(\yb)+\chi_{\nub}(\yb-\xb)\phib(\xb)]\varphi(\xb)+\chi_{\nub}(\xb-\yb)\phib(\yb)(\varphi(\yb)-\varphi(\xb))\}$}\\
    &=\resizebox{\hsize}{!}
{$\chi_{[\epsilon,\infty)}(|\yb-\xb|)\{[\chi_{\nub}(\xb-\yb)\phib(\yb)+\chi_{\nub}(\yb-\xb)\phib(\xb)]\varphi(\xb)+\chi_{\nub}(\yb-\xb)\phib(\xb)(\varphi(\yb)-\varphi(\xb))$}\\
    &\quad\quad+[\chi_{\nub}(\xb-\yb)\phib(\yb)-\chi_{\nub}(\yb-\xb)\phib(\xb)](\varphi(\yb)-\varphi(\xb))\}.
\end{align*}
Therefore
\begin{align*}
    &\quad\ \int_{\R^d\backslash B_\epsilon(\xb)} \frac{\yb-\xb}{|\yb-\xb|}\cdot(\chi_{\nub}(\xb-\yb)\varphi(\yb)\vb(\yb)+\chi_{\nub}(\yb-\xb)\varphi(\xb)\vb(\xb))w(\yb-\xb)d\yb\\
    &=\varphi(\xb)\int_{\R^d\backslash B_\epsilon(\xb)}\frac{\yb-\xb}{|\yb-\xb|}\cdot(\chi_{\nub}(\xb-\yb)\vb(\yb)+\chi_{\nub}(\yb-\xb)\vb(\xb))w(\yb-\xb)d\yb\\
    &\quad+\phib(\xb)\cdot\int_{\R^d\backslash B_\epsilon(\xb)}\chi_{\nub}(\yb-\xb)(\varphi(\yb)-\varphi(\xb))\frac{\yb-\xb}{|\yb-\xb|}w(\yb-\xb)d\yb\\
    &\quad+\int_{\R^d\backslash B_\epsilon(\xb)}\frac{\yb-\xb}{|\yb-\xb|}\cdot(\chi_{\nub}(\xb-\yb)\phib(\yb)-\chi_{\nub}(\yb-\xb)\phib(\xb))(\varphi(\yb)-\varphi(\xb))w(\yb-\xb)d\yb.
\end{align*}
Thus, taking the limit as $\epsilon\to 0$, by definition of the nonlocal gradient operator in \cref{hbgrad} and the equivalent form of the divergence operator in \Cref{rem:div_equiv}, we have
\begin{equation}
    \mathcal{D}^{-\nub}_w (\varphi\phib)(\xb)=\varphi(\xb)\mathcal{D}^{-\nub}_w \phib(\xb)+\mathcal{G}^{\nub}_w \varphi(\xb)\cdot\phib(\xb)+S(\varphi,\phib)(\xb),
\end{equation}
where we used
\begin{align*}
    &\lim_{\epsilon\to 0}\int_{\R^d\backslash B_\epsilon(\xb)}\frac{\yb-\xb}{|\yb-\xb|}\cdot(\chi_{\nub}(\xb-\yb)\phib(\yb)-\chi_{\nub}(\yb-\xb)\phib(\xb))(\varphi(\yb)-\varphi(\xb))w(\yb-\xb)d\yb\\
    &=\int_{\R^d} \frac{\yb-\xb}{|\yb-\xb|}\cdot(\chi_{\nub}(\xb-\yb)\phib(\yb)-\chi_{\nub}(\yb-\xb)\phib(\xb))(\varphi(\yb)-\varphi(\xb))w(\yb-\xb)d\yb.
\end{align*}
Indeed, similar to \eqref{eq:ptwsS}, for every $\xb\in\R^d$,
\[\begin{split}
    &\left|\chi_{[\epsilon,\infty)}(|\yb-\xb|)\frac{\yb-\xb}{|\yb-\xb|}\cdot(\chi_{\nub}(\xb-\yb)\phib(\yb)-\chi_{\nub}(\yb-\xb)\phib(\xb))(\varphi(\yb)-\varphi(\xb))w(\yb-\xb)\right|\\
    \le & 4 \|\phib\|_{L^\infty(\R^d;\R^d)}\|\varphi \|_{W^{1,\infty}(\R^d)}\min(1,|\yb-\xb|)w(\yb-\xb),
\end{split}\]
so the above limit is justified by the dominated convergence theorem.
\end{proof}

The generalized produce rule presented above is helpful in showing the following result, which says that $\cS_w^{\nub}(\R^d)$ is closed under the multiplication with $C^\infty_c(\R^d)$.
\begin{lem}[Closedness under multiplication with bump functions]\label{lem:mult}
For $u\in \cS_w^{\nub}(\R^d)$
 and $\varphi\in C^\infty_c(\R^d)$, $\varphi u\in \cS_w^{\nub}(\R^d)$ and 
\begin{equation}\label{eq:multiest}
    \|\varphi u\|_{\cS_w^{\nub}(\R^d)}\le C \|\varphi\|_{W^{1,\infty}(\R^d)}\|u\|_{\cS_w^{\nub}(\R^d)},\quad \forall \varphi\in C^\infty_c(\R^d),\ u\in \cS_w^{\nub}(\R^d),
\end{equation}
where $C$ depends on $d$, $M_w^1$ and $M_w^2$. 
As a result, for $u\in \cS_w^{\nub}(\Om)$ and $\varphi\in C^\infty_c(\R^d)$ with $\supp\,\varphi\subset\Om$, $\varphi u\in \cS_w^{\nub}(\Om)$ for any domain $\Om\subset\R^d$.
\end{lem}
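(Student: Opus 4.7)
The plan is to derive, in the distributional (and ultimately $L^2$) sense, the product rule
\[
\mathfrak{G}^{\nub}_w(\varphi u) = \varphi\, \mathfrak{G}^{\nub}_w u + u\, \mathcal{G}^{\nub}_w \varphi + S_{\mathcal{G}}(\varphi, u),
\]
where $S_{\mathcal{G}}(\varphi,u)$ is the same symmetric remainder from \Cref{prop:productrulediv} but now with $u\in L^2(\R^d)$ in place of a smooth $\psi$ (the pointwise defining formula still makes sense). Once this identity is proved, the estimate \cref{eq:multiest} will follow by bounding each of the three summands in $L^2$.

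To derive the identity I would take any $\bm\phi\in C^\infty_c(\R^d;\R^d)$, apply \Cref{prop:productrulediv} to the test field $\varphi\bm\phi\in C^\infty_c(\R^d;\R^d)$ to write $\varphi\,\mathcal{D}^{-\nub}_w\bm\phi = \mathcal{D}^{-\nub}_w(\varphi\bm\phi) - \mathcal{G}^{\nub}_w\varphi\cdot\bm\phi - S(\varphi,\bm\phi)$, and substitute into $\langle\mathfrak{G}^{\nub}_w(\varphi u),\bm\phi\rangle = -\int_{\R^d} u\varphi\,\mathcal{D}^{-\nub}_w\bm\phi\,d\xb$. Since $\mathfrak{G}^{\nub}_w u\in L^2$, the first piece produces $\int \varphi\,\mathfrak{G}^{\nub}_w u\cdot\bm\phi\,d\xb$; the second gives $\int u\,\mathcal{G}^{\nub}_w\varphi\cdot\bm\phi\,d\xb$ directly; the third, $\int u(\xb)\,S(\varphi,\bm\phi)(\xb)\,d\xb$, is the heart of the matter. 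Writing $S=A-B$ under the change $\zb=\yb-\xb$ (with $A$ carrying $\chi_{\nub}(-\zb)\bm\phi(\xb+\zb)$ and $B$ carrying $\chi_{\nub}(\zb)\bm\phi(\xb)$), Fubini handles $\int uB\,d\xb$ directly because its inner $\zb$-integral equals $u(\xb)\mathcal{G}^{\nub}_w\varphi(\xb)$, while for $\int uA\,d\xb$ I would swap the order of integration, change variables $\yb=\xb+\zb$ followed by $\zb\mapsto -\zb$ to bring $\bm\phi$ to the outer variable, and then add and subtract $u(\yb)\mathcal{G}^{\nub}_w\varphi(\yb)\cdot\bm\phi(\yb)$ to recognize $S_{\mathcal{G}}(\varphi,u)$.

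The final step is the $L^2$ bookkeeping: $\|\varphi\,\mathfrak{G}^{\nub}_w u\|_{L^2}\le\|\varphi\|_{L^\infty}\|\mathfrak{G}^{\nub}_w u\|_{L^2}$; the bound \cref{LpestgradW1p} with $p=\infty$ gives $\|\mathcal{G}^{\nub}_w\varphi\|_{L^\infty}\le C(M_w^1+M_w^2)\|\varphi\|_{W^{1,\infty}}$ and hence controls $\|u\,\mathcal{G}^{\nub}_w\varphi\|_{L^2}$; and the pointwise estimate $|S_{\mathcal{G}}(\varphi,u)(\xb)|\le 2\|\varphi\|_{W^{1,\infty}}\int\min(1,|\zb|)\,|u(\xb+\zb)-u(\xb)|\,w(\zb)\,d\zb$, followed by Minkowski's integral inequality together with the crude bound $\|u(\cdot+\zb)-u\|_{L^2}\le 2\|u\|_{L^2}$, yields $\|S_{\mathcal{G}}(\varphi,u)\|_{L^2}\le 4(M_w^1+M_w^2)\|\varphi\|_{W^{1,\infty}}\|u\|_{L^2}$. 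Summing gives \cref{eq:multiest}, and the localization to $\cS_w^{\nub}(\Om)$ is automatic since $\varphi u$ vanishes on $\R^d\setminus\Om$ whenever $\supp\varphi\subset\Om$. The main obstacle will be the Fubini step in handling $\int uA\,d\xb$ when $w$ has infinite support or is non-integrable at the origin: the fact that only the integrable weight $\min(1,|\cdot|)w$ (not $w$ itself) ever appears in the relevant bounds, together with Young's convolution inequality applied to $|\bm\phi|\ast(\min(1,|\cdot|)w)$ in $L^2$, is what will make the swap rigorous under only the standing hypothesis \cref{eq:kernelassumption}.
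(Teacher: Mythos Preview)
Your proposal is correct and follows essentially the same route as the paper: both apply the product rule for $\mathcal{D}^{-\nub}_w$ from \Cref{prop:productrulediv} to $\varphi\bm\phi$, handle the first two terms exactly as you describe, and reduce the problem to identifying $\int u\,S(\varphi,\bm\phi)\,d\xb$ as $\int \bm\phi\cdot(\text{some }L^2\text{ function})\,d\xb$ via Fubini and a change of variables. The only cosmetic difference is that the paper does not recombine the pieces into the compact form $S_{\mathcal{G}}(\varphi,u)$; instead it keeps the decomposition $H(u,\varphi)=F(u,\varphi)+u\,H_2(\varphi)$ (which is exactly your $S_{\mathcal{G}}(\varphi,u)$ after the same change of variables) and estimates $F$ and $H_2$ separately via H\"older's inequality, whereas you bound $S_{\mathcal{G}}(\varphi,u)$ in one shot by Minkowski's integral inequality. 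Both routes rely on the same key observation you flag at the end: only the integrable weight $\min(1,|\cdot|)\,w$ ever appears, which is what makes Fubini legitimate under \cref{eq:kernelassumption} alone.
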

\begin{proof}
First, notice that  
\[\|\varphi u\|_{L^2(\R^d)}\le \|\varphi\|_{L^\infty(\R^d)}\|u\|_{L^2(\R^d)}.\]
Therefore we only need to show $\mathfrak{G}^{\nub}_w (\varphi u)\in L^2(\R^d;\R^d)$. The rest of proof in fact shows a `product rule' for nonlocal distributional gradient $\mathfrak{G}^{\nub}_w$ using the `product rule' of nonlocal divergence $\mathcal{D}^{-\nub}_w$ derived in \Cref{prop:productrulediv}. Since $u\in \cS_w^{\nub}(\R^d)$, there exists $\wb=\mathfrak{G}^{\nub}_w  u\in L^2(\R^d;\R^d)$ such that
\begin{equation}
    \int_{\R^d} \wb(\xb)\cdot\bm\phi(\xb)d\xb =-\int_{\R^d} u(\xb)\mathcal{D}^{-\nub}_w \bm\phi(\xb)d\xb ,\quad\forall\bm\phi\in C^\infty_c(\R^d;\R^d).
\end{equation}
To show $\mathfrak{G}^{\nub}_w (\varphi u)\in L^2(\R^d;\R^d)$, it suffices to find $\vb\in L^2(\R^d;\R^d)$ such that
\begin{equation}\label{eq:v}
    \int_{\R^d} \vb(\xb)\cdot\bm\phi(\xb)d\xb =-\int_{\R^d} \varphi(\xb)u(\xb)\mathcal{D}^{-\nub}_w \bm\phi(\xb)d\xb ,\quad\forall\bm\phi\in C^\infty_c(\R^d;\R^d).
\end{equation}
By \Cref{prop:productrulediv}, we have 
\[\varphi\mathcal{D}^{-\nub}_w\bm\phi=\mathcal{D}^{-\nub}_w (\varphi\bm\phi)-\mathcal{G}^{\nub}_w\varphi\cdot\bm\phi-S(\varphi,\bm\phi),\]thus, for any $\bm\phi\in C^\infty_c(\R^d;\R^d)$,
\begin{align*}
    &\quad\, -\int_{\R^d} \varphi(\xb)u(\xb)\mathcal{D}^{-\nub}_w \bm\phi(\xb)d\xb \\
    &=-\int_{\R^d} u(\xb)\mathcal{D}^{-\nub}_w (\varphi\bm\phi)(\xb)d\xb +\int_{\R^d} u(\xb)\mathcal{G}^{\nub}_w\varphi(\xb)\cdot\bm\phi(\xb)d\xb +\int_{\R^d} u(\xb)S(\varphi,\bm\phi)(\xb)d\xb \\
    &=\int_{\R^d} \wb(\xb)\varphi(\xb)\cdot\bm\phi(\xb)d\xb +\int_{\R^d} u(\xb)\mathcal{G}^{\nub}_w\varphi(\xb)\cdot\bm\phi(\xb)d\xb +\int_{\R^d} H(u,\varphi)(\xb)\cdot\bm\phi(\xb)d\xb ,
\end{align*}
where we use $\varphi\bm\phi\in C^\infty_c(\R^d;\R^d)$ and $H(u,\varphi):\R^d\to \R^d$ is a vector-valued function whose expression will be given in a moment. Comparing this with \eqref{eq:v}, we notice that the vector-valued function $\vb$ should be 
\begin{equation}\label{eq:defv}
    \vb(\xb)=\wb(\xb)\varphi(\xb)+u(\xb)\mathcal{G}^{\nub}_w\varphi(\xb)+H(u,\varphi)(\xb).
\end{equation}
It remains to show this function is in $L^2(\R^d;\R^d)$.
By the definition of $S(\varphi,\bm\phi)$,
\begin{align*}
    &\quad\; S(\varphi,\bm\phi)(\xb)\\
    &=\int_{\R^d}\frac{\yb-\xb}{|\yb-\xb|}\cdot(\chi_{\nub}(\xb-\yb )\bm\phi(\yb)-\chi_{\nub}(\yb-\xb)\bm\phi(\xb))(\varphi(\yb)-\varphi(\xb))w(\yb-\xb)d\yb \\
    &=\int_{\R^d}\frac{\yb-\xb}{|\yb-\xb|}\chi_{\nub}(\xb-\yb )\cdot\bm\phi(\yb)(\varphi(\yb)-\varphi(\xb))w(\yb-\xb)d\yb \\
    &\quad-\int_{\R^d}\frac{\yb-\xb}{|\yb-\xb|}\chi_{\nub}(\yb-\xb)(\varphi(\yb)-\varphi(\xb))w(\yb-\xb)d\yb \cdot\bm\phi(\xb)\\
    &=:H_1(\varphi,\bm\phi)(\xb)+H_2(\varphi)(\xb)\cdot\bm\phi(\xb).
\end{align*}
Note that both $H_1(\varphi,\bm\phi)$ and $H_2(\varphi)$ are well-defined maps on $\R^d$ due to the similar reason for the the pointwise estimate \eqref{eq:ptwsS} for $S(\varphi,\bm\phi)$. For example,
\begin{equation}\label{eq:LinfH2}
\begin{split}
    &|H_2(\varphi)(\xb)|=\left|-\int_{\R^d}\frac{\yb-\xb}{|\yb-\xb|}\chi_{\nub}(\yb-\xb)(\varphi(\yb)-\varphi(\xb))w(\yb-\xb)d\yb \right|\\
    \le & \int_{\R^d} (2\|\varphi\|_{L^\infty(\R^d)}\chi_{|\yb-\xb|>1}(\yb)+\|\nabla\varphi\|_{L^\infty(\R^d)} |\yb-\xb|\chi_{B_1(\xb)}(\yb))w(\yb-\xb)d\yb\\ \leq & 2\| \varphi\|_{W^{1,\infty}(\R^d)} (M_w^1+M_w^2),\ \xb\in\R^d.
 \end{split}   
\end{equation}
Observe that by Fubini's theorem 
\begin{align*}
    &\quad\ \int_{\R^d} u(\xb)H_1(\varphi,\bm\phi)(\xb)d\xb \\
    &=\int_{\R^d} u(\xb)\int_{\R^d}\frac{\yb-\xb}{|\yb-\xb|}\chi_{\nub}(\xb-\yb )\cdot\bm\phi(\yb)(\varphi(\yb)-\varphi(\xb))w(\yb-\xb)d\yb d\xb \\
    &=\int_{\R^d} \bm\phi(\yb)\cdot F(u,\varphi)(\yb)d\yb ,
\end{align*}
where $F(u,\varphi):\R^d\to \R^d$ is given by
\[F(u,\varphi)(\yb):=\int_{\R^d} u(\xb)\frac{\yb-\xb}{|\yb-\xb|}\chi_{\nub}(\xb-\yb )(\varphi(\yb)-\varphi(\xb))w(\yb-\xb)d\xb .\]
Using Holder's inequality, one can show $F(u,\varphi)\in L^2(\R^d;\R^d)$. Indeed, 
\begin{align*}
    &\quad\left(\int_{\R^d} |F(u,\varphi)(\yb)|^2 d\yb \right)^{\frac{1}{2}} \\
    &\leq 2\| \varphi\|_{W^{1,\infty}(\R^d)}\left( \int_{\R^d} \left| \int_{\R^d} |u(\xb)| \min(1,|\yb-\xb|)w(\yb-\xb)d\xb \right|^2 d\yb  \right)^{\frac{1}{2}}  \\
    &\le 2\| \varphi\|_{W^{1,\infty}(\R^d)}  (M_w^1+M_w^2)^{\frac{1}{2}}\left(\int_{\R^d} \int_{\R^d} |u(\xb)|^2\min(1,|\yb-\xb|)w(\yb-\xb)d\xb d\yb \right)^\frac{1}{2}\\
    &\leq 2\| \varphi\|_{W^{1,\infty}(\R^d)} (M_w^1+M_w^2)\|u\|_{L^2(\R^d)}<\infty.
\end{align*}
Combining the above discussions, we obtain 
\begin{equation}
    H(u,\varphi)(\xb)=F(u,\varphi)(\xb)+u(\xb)H_2(\varphi)(\xb),
\end{equation}with
\[
\begin{split}
\|H(u,\varphi)\|_{L^2(\R^d;\R^d)}&\le \|F(u,\varphi)\|_{L^2(\R^d;\R^d)}+\|H_2(\varphi)\|_{L^\infty(\R^d;\R^d)}\|u\|_{L^2(\R^d)}\\
&\le 4\| \varphi\|_{W^{1,\infty}(\R^d)} (M_w^1+M_w^2)\|u\|_{L^2(\R^d)}.
\end{split}
\]
Therefore, by \eqref{eq:defv}, $\vb=\wb\varphi+u\mathcal{G}^{\nub}_w\varphi+H(u,\varphi)\in L^2(\R^d;\R^d)$ and
\begin{align*}
   &\quad\; \|\vb\|_{L^2(\R^d;\R^d)}\\
    &\le \|\wb\|_{L^2(\R^d;\R^d)}\|\varphi\|_{L^\infty(\R^d)}+\|u\|_{L^2(\R^d)}\|\mathcal{G}^{\nub}_w\varphi\|_{L^\infty(\R^d;\R^d)}+\|H(u,\varphi)\|_{L^2(\R^d;\R^d)}\\
    &\le \|\wb\|_{L^2(\R^d;\R^d)}\|\varphi\|_{L^\infty(\R^d)}+\|u\|_{L^2(\R^d)} C(M_w^1+M_w^2)\| \varphi\|_{W^{1,\infty}(\R^d)}\\
    &\quad+4\| \varphi\|_{W^{1,\infty}(\R^d)} (M_w^1+M_w^2)\|u\|_{L^2(\R^d)}\\
    &\le C\|\varphi\|_{W^{1,\infty}(\R^d)}\|u\|_{\cS_w^{\nub}(\R^d)},
\end{align*}
where we have used \Cref{lem:opforsmoothfct}.
This combined with the $L^2$ estimate on $\varphi u$ leads to \cref{eq:multiest}. 
\end{proof}

Next, we present two results regarding the translation and mollification of functions in $\cS_w^{\nub}(\R^d)$, which are standard techniques useful for proving density of smooth functions. 
For $f:\R^d\to \R^k$ and a given vector $\ab\in\R^d$, denote the translation operator $\tau_{\ab} f (\xb):=f(\xb + \ab)$.  
In addition, we let $\eta_\epsilon$ be the standard mollifiers for $\epsilon>0$, i.e. $\eta_\epsilon(\xb)=\frac{1}{\epsilon^d}\eta(\frac{x}{\epsilon})$ where $\eta\in C^\infty_c(\R^d)$ and $\int_{\R^d}\eta(\xb)d\xb =1$.
The statement of the following two lemmas are new but the proofs follow the standard arguments of similar results in the classical Sobolev spaces. We therefore leave their proofs in the appendix.  
\begin{lem}[Continuity of translation]\label{lem:contitrans}
For $u\in \cS_w^{\nub}(\R^d)$ and $\ab\in\R^d$, $\tau_{\ab} u \in \cS_w^{\nub}(\R^d)$ and
 \[
 \lim_{|\ab|\to 0}\|\tau_{\ab} u-u\|_{\cS_w^{\nub}(\R^d)}=0.
 \]
\end{lem}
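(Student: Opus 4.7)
The plan is to reduce everything to the classical continuity of translation in $L^2$. Concretely, I will first prove a translation-covariance identity
\[
\mathfrak{G}_w^{\nub}(\tau_{\ab} u) \;=\; \tau_{\ab}\bigl(\mathfrak{G}_w^{\nub} u\bigr) \qquad \text{as elements of } L^2(\R^d;\R^d),
\]
and then use it to split the $\cS_w^{\nub}$-norm into two $L^2$ pieces, each of which is continuous in $\ab$ by a standard result.

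For the covariance step, the key observation is that the kernel of $\mathcal{D}_w^{-\nub}$ depends only on the difference $\yb-\xb$. Using the pointwise formula from \Cref{lem:opforsmoothfct}, I would verify directly, by the substitution $\yb'=\yb+\ab$, that
\[
\mathcal{D}_w^{-\nub}(\tau_{-\ab}\bm\phi)(\xb) \;=\; \bigl(\tau_{-\ab}\mathcal{D}_w^{-\nub}\bm\phi\bigr)(\xb), \qquad \bm\phi\in C_c^\infty(\R^d;\R^d).
\]
Feeding this into the definition of the distributional nonlocal gradient, and changing variables once in $\xb$, gives
\[
\langle \mathfrak{G}_w^{\nub}(\tau_{\ab} u),\bm\phi\rangle \;=\; -\!\int_{\R^d} u(\yb)\, \mathcal{D}_w^{-\nub}(\tau_{-\ab}\bm\phi)(\yb)\,d\yb \;=\; \langle \mathfrak{G}_w^{\nub} u, \tau_{-\ab}\bm\phi\rangle \;=\; \int_{\R^d}\tau_{\ab}(\mathfrak{G}_w^{\nub} u)(\xb)\cdot \bm\phi(\xb)\,d\xb,
\]
which shows the covariance identity and, since $\mathfrak{G}_w^{\nub} u\in L^2(\R^d;\R^d)$ by hypothesis, also yields $\tau_{\ab} u\in \cS_w^{\nub}(\R^d)$ with $\|\tau_{\ab} u\|_{\cS_w^{\nub}} = \|u\|_{\cS_w^{\nub}}$.

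Once the covariance is in place, the continuity claim is immediate: writing
\[
\|\tau_{\ab} u - u\|_{\cS_w^{\nub}(\R^d)}^2 \;=\; \|\tau_{\ab} u - u\|_{L^2(\R^d)}^2 \;+\; \bigl\|\tau_{\ab}(\mathfrak{G}_w^{\nub} u) - \mathfrak{G}_w^{\nub} u\bigr\|_{L^2(\R^d;\R^d)}^2,
\]
each summand tends to $0$ as $|\ab|\to 0$ by the classical continuity of translation on $L^2$ applied to $u$ and to each component of $\mathfrak{G}_w^{\nub} u$.

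The only delicate point is the first step, where one must check that the formal change of variables in the duality pairing is legitimate. This is not an integrability issue (the growth estimate \cref{Lpestdivsmooth} ensures $\mathcal{D}_w^{-\nub}\bm\phi\in L^2$ and $u\in L^2$, so Fubini and affine substitutions apply unconditionally) but simply bookkeeping: one needs the kernel identity $w(\yb-\xb)$ and $\chi_{-\nub}(\yb-\xb)$ to be invariant under the simultaneous shift $\xb\mapsto \xb-\ab$, $\yb\mapsto \yb-\ab$. After that, the rest of the argument is a routine appeal to $L^2$ translation continuity.
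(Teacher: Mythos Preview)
Your proposal is correct and follows essentially the same approach as the paper's own proof: establish the translation-covariance identity $\mathfrak{G}_w^{\nub}(\tau_{\ab} u)=\tau_{\ab}(\mathfrak{G}_w^{\nub} u)$ via the commutation $\mathcal{D}_w^{-\nub}(\tau_{-\ab}\bm\phi)=\tau_{-\ab}\mathcal{D}_w^{-\nub}\bm\phi$ (which holds because the kernel depends only on $\yb-\xb$), and then reduce the $\cS_w^{\nub}$-norm continuity to the standard $L^2$ continuity of translation applied separately to $u$ and to $\mathfrak{G}_w^{\nub} u$. The paper carries out exactly this computation in its appendix.
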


\begin{lem}[Mollification in $\cS_w^{\nub}(\R^d)$]\label{lem:mollif}
For $u\in \cS_w^{\nub}(\R^d)$ and $\epsilon>0$, $\eta_\epsilon*u\in \cS_w^{\nub}(\R^d)$ and
\begin{equation}
    \lim_{\epsilon\to 0} \|\eta_\epsilon*u-u\|_{\cS_w^{\nub}(\R^d)}=0.
\end{equation}
\end{lem}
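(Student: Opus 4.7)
The plan is to establish the commutation identity
\[
\mathfrak{G}^{\nub}_w(\eta_\epsilon * u) \;=\; \eta_\epsilon * \mathfrak{G}^{\nub}_w u \quad \text{a.e.\ in } \R^d,
\]
from which both claims of the lemma follow at once. Indeed, given this identity, $\eta_\epsilon * u$ automatically lies in $\cS_w^{\nub}(\R^d)$ because $\eta_\epsilon * \mathfrak{G}^{\nub}_w u \in L^2(\R^d;\R^d)$ by Young's convolution inequality, and the required $\cS_w^{\nub}$-convergence reduces to the classical $L^2$-convergence $\eta_\epsilon * v \to v$ applied to $v = u$ and componentwise to $v = \mathfrak{G}^{\nub}_w u$.

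To prove the identity, I would test against an arbitrary $\bm\phi \in C_c^\infty(\R^d;\R^d)$ and write $\tilde{\eta}_\epsilon(\zb) := \eta_\epsilon(-\zb)$. Using the definition of the distributional gradient and Fubini's theorem (applicable since $u \in L^2(\R^d)$ and $\mathcal{D}^{-\nub}_w \bm\phi \in L^2(\R^d) \cap L^\infty(\R^d)$ with $\mathcal{D}^{-\nub}_w \bm\phi$ smooth by \Cref{lem:opforsmoothfct}),
\begin{align*}
\langle \mathfrak{G}^{\nub}_w(\eta_\epsilon * u), \bm\phi \rangle
&= -\int_{\R^d} (\eta_\epsilon * u)(\xb)\, \mathcal{D}^{-\nub}_w \bm\phi(\xb) \, d\xb \\
&= -\int_{\R^d} u(\yb) \, (\tilde{\eta}_\epsilon * \mathcal{D}^{-\nub}_w \bm\phi)(\yb) \, d\yb.
\end{align*}

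The key technical step, and what I expect to be the main obstacle, is to push the mollifier through the nonlocal divergence operator, i.e.\ to verify
\[
\tilde{\eta}_\epsilon * \mathcal{D}^{-\nub}_w \bm\phi \;=\; \mathcal{D}^{-\nub}_w\bigl(\tilde{\eta}_\epsilon * \bm\phi\bigr).
\]
Since both $\bm\phi$ and $\tilde{\eta}_\epsilon * \bm\phi$ belong to $C_c^\infty(\R^d;\R^d)$, \Cref{lem:opforsmoothfct} lets us drop the principal value and represent $\mathcal{D}^{-\nub}_w$ as a Lebesgue integral against $w$. Expanding $(\tilde{\eta}_\epsilon * \bm\phi)(\yb+\zb) - (\tilde{\eta}_\epsilon * \bm\phi)(\yb) = \int \tilde{\eta}_\epsilon(\wb)(\bm\phi(\yb-\wb+\zb) - \bm\phi(\yb-\wb))\,d\wb$ and swapping the $\wb$- and $\zb$-integrals via Fubini yields the claim; the swap is justified by the integrable majorant $C(\|\bm\phi\|_{L^\infty} + \|\nabla\bm\phi\|_{L^\infty}) \tilde{\eta}_\epsilon(\wb)\min(1,|\zb|) w(\zb)$ together with \eqref{eq:kernelassumption}.

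With the commutation in hand, using $\tilde{\eta}_\epsilon * \bm\phi \in C_c^\infty(\R^d;\R^d)$ as a legitimate test field for $\mathfrak{G}^{\nub}_w u$ gives
\begin{align*}
\langle \mathfrak{G}^{\nub}_w(\eta_\epsilon * u), \bm\phi \rangle
&= -\int_{\R^d} u(\yb) \, \mathcal{D}^{-\nub}_w(\tilde{\eta}_\epsilon * \bm\phi)(\yb) \, d\yb \\
&= \int_{\R^d} \mathfrak{G}^{\nub}_w u(\yb) \cdot (\tilde{\eta}_\epsilon * \bm\phi)(\yb) \, d\yb \\
&= \int_{\R^d} (\eta_\epsilon * \mathfrak{G}^{\nub}_w u)(\xb) \cdot \bm\phi(\xb) \, d\xb,
\end{align*}
where the last line is another Fubini. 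As this holds for every $\bm\phi$ and both sides are $L^2$ functions, the identity holds a.e.\ in $\R^d$, and the conclusion follows by the reduction described at the outset.
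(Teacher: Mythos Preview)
Your proposal is correct and follows essentially the same route as the paper: establish the commutation $\mathfrak{G}^{\nub}_w(\eta_\epsilon * u) = \eta_\epsilon * \mathfrak{G}^{\nub}_w u$ by testing against $\bm\phi\in C_c^\infty$, passing the mollifier across via Fubini, invoking $\mathcal{D}^{-\nub}_w(\eta_\epsilon * \bm\phi) = \eta_\epsilon * \mathcal{D}^{-\nub}_w \bm\phi$, and then reducing everything to standard $L^2$-approximation by mollifiers. The only (cosmetic) difference is that you keep track of the reflected mollifier $\tilde{\eta}_\epsilon$ explicitly, whereas the paper tacitly uses that the standard mollifier is even; your version is slightly more careful in this respect, and you also spell out the integrable majorant for the commutation step that the paper leaves as ``one can check.''
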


With the necessary components presented in Lemmas \ref{lem:mult}, \ref{lem:contitrans} and \ref{lem:mollif}, the proof of \Cref{thm:density} uses the standard mollification and partition of unity techniques (see \cite{AdFo03,Evans2015}  for instance). 
Here we present its proof for completeness. Similar arguments can be found in \cite{fiscella2015density} or \cite{Gounoue20} (Theorem 3.76(i)). 

\begin{proof}[Proof of \Cref{thm:density}]
We prove the result for $\Om$ being a bounded domain with continuous boundary. The other two cases are more straightforward. 
Since $\partial\Om$ is compact, there exist $\xb_i\in\partial\Om$, $i = 1,\cdots, N$ and $r > 0$ such that
\[\partial\Om\subset\bigcup_{i=1}^N B_{r/2}(\xb_i),\]and
\[\Om \cap B_r (\xb_i ) = \{\xb = (\xb', x_d ) \in B_r (\xb_i )\,|\,x_d > \zeta_i (\xb')\}\]
\[\Om^c \cap B_r (\xb_i ) = \{\xb = (\xb' , x_d ) \in B_r (\xb_i )\,|\,x_d \le \zeta_i (\xb')\}\]for some continuous functions $\zeta_i : \R^{d-1} \to \R$ up to relabelling the coordinates. Let 
$\Om^\circ_{r/2}: = \{x \in\Om : \mathrm{dist} (x, \partial\Om) > r/2\}$. Then,

\[\Om\subset  \bigcup_{i=1}^N B_{r}(\xb_i)\cup \Om^\circ_{r/2}.\]

Let $\{ \varphi_i \}_{i=0}^N$ be a smooth partition of unity subordinate to the above constructed sets. That is we have $\varphi_i\ge 0$, $\sum_{i=0}^N \varphi_i=1$ and $\varphi_0 \in C^\infty_c(\Om^\circ_{r/2})$ and $\varphi_i \in C^\infty_c(B_r (\xb_i ))$, $1\le i\le N$. Let $u\in \cS_w^{\nub}(\Om)$ and deﬁne
\[u^i:=\varphi_i u\quad\text{ for all }i\in\{0,\cdots,N\}.\]
By \Cref{lem:mult}, $u^i \in \cS_w^{\nub}(\Om)$. 
For $\mu>0$, we deﬁne
\[u^i_\mu(\xb)=u^i(\xb',x_d-\mu)\quad\text{for }\ i\in\{1,\cdots,N\},\ \xb=(\xb',x_d)\in \R^d.\] Fix $\sigma>0$, by Lemma \ref{lem:contitrans}, there exists $\mu\in (0,\frac{1}{2}\min_{1\le i\le N}\mathrm{dist}(\supp\ \varphi_i,\partial B_r(\xb_i)))$ such that 
\[\|u^i_\mu-u^i\|_{\cS_w^{\nub}(\R^d)}<\frac{\sigma}{2(N+1)},\quad\forall 1\le i\le N.\]
Fix this $\mu$, it follows that $\eta_\epsilon*u^i_\mu \in C^\infty_c(\Om)$ for a positive number $\epsilon$ less than $\min_{1\le i\le N}\mathrm{dist}(\supp\ u^i_\mu,\partial\Om)/2$. Indeed, since $\supp\, u^i_\mu\subset\overline{W^i_\mu}\subset\Om\cap B_r(\xb_i)$, where $W^i_\mu:=\{\zb=(\zb',z_d)\in B_r(\xb_i):z_d-\mu>\zeta_i(\zb')\}$, $1\le i\le N$, 
\[\supp (\eta_\epsilon*u^i_\mu)\subset \overline{B_\epsilon(\bm 0)+\supp\ u^i_\mu}\subset\Om.\]Since $u^i_\mu\in \cS_w^{\nub}(\R^d)$, by Lemma \ref{lem:mollif}, $\eta_\epsilon*u^i_\mu\in \cS_w^{\nub}(\R^d)$ and there exists $\epsilon>0$ such that $\eta_\epsilon * u^0 \in C^\infty_c(\Omega)$,
\[\|\eta_\epsilon*u^i_\mu-u^i_\mu\|_{\cS_w^{\nub}(\R^d)}<\frac{\sigma}{2(N+1)}\]and
\[\|\eta_\epsilon*u^0-u^0\|_{\cS_w^{\nub}(\R^d)}<\frac{\sigma}{2(N+1)}.\]
Let $v_\epsilon:=\eta_\epsilon*u^0+\sum_{i=1}^N \eta_\epsilon*u^i_\mu$, we have $v_\epsilon\in C^\infty_c(\Omega)$ and $\|v_\epsilon-u\|_{\cS_w^{\nub}(\R^d)}<\sigma$. Therefore, the lemma is proved.
\end{proof}

\subsection{Nonlocal vector inequalities and identities}
\label{subsec:vector_identities_general}
In this subsection, we derive a number of results for Sobolev-type functions using \Cref{thm:density}. The first two results are the analogs of $H^1\subset H(\mathrm{div})$ and $H^1\subset H(\mathrm{curl})$ in the local setting. We assume $\Om$ is a bounded domain with a continuous boundary, an epigraph, or $\R^d$ so that the density result holds.

\begin{prop}\label{prop:divisL2}
For $\ub\in \cS_w^{\nub}(\Omega;\R^d)$, $\mathfrak{D}_{w}^{\pm\nub} \ub\in L^2(\R^d)$ and \[\|\mathfrak{D}_{w}^{\pm\nub} \ub\|_{L^2(\R^d)}\le \|\mathfrak{G}^{\nub}_w \ub\|_{L^2(\R^d;\R^{d\times d})}.\]Thus, $\mathfrak{D}_{w}^{\pm\nub}:\cS_w^{\nub}(\Omega;\R^d)\to L^2(\R^d)$ is a bounded linear operator with operator norm no more than $1$. In addition, there exists $\{\ub^{(n)}\}_{n=1}^\infty\subset C^\infty_c(\Om;\R^d)$ such that $\ub^{(n)}\to \ub$ in $\cS_w^{\nub}(\Omega;\R^d)$ and $\mathcal{D}_w^{\pm\nub} \ub^{(n)}\to \mathfrak{D}_{w}^{\pm\nub} \ub$ in $L^2(\R^d)$ as $n\to\infty$.
\end{prop}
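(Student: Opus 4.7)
The plan is to establish the inequality first on smooth compactly supported vector fields by Fourier analysis, and then extend to all of $\cS_w^{\nub}(\Omega;\R^d)$ by the density result in \Cref{thm:density}.

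For $\ub\in C_c^\infty(\Om;\R^d)\subset C_c^\infty(\R^d;\R^d)$, both $\mathcal{G}_w^{\nub}\ub$ and $\mathcal{D}_w^{\pm\nub}\ub$ lie in $L^2$ by \Cref{lem:opforsmoothfct}. Applying \Cref{lem:FT}, we compare the Fourier symbols pointwise: for each $\xib\in\R^d$, the Frobenius norm of the $d\times d$ matrix $\bm\lambda_w^{\nub}(\xib)\otimes\hat{\ub}(\xib)$ is exactly $|\bm\lambda_w^{\nub}(\xib)|\,|\hat{\ub}(\xib)|$, whereas the scalar $\bm\lambda_w^{\pm\nub}(\xib)^T\hat{\ub}(\xib)$ satisfies Cauchy--Schwarz in $\C^d$, so
\[
\left|\bm\lambda_w^{\pm\nub}(\xib)^T\hat{\ub}(\xib)\right|^2\le |\bm\lambda_w^{\pm\nub}(\xib)|^2\,|\hat{\ub}(\xib)|^2=|\bm\lambda_w^{\nub}(\xib)|^2\,|\hat{\ub}(\xib)|^2,
\]
where we used $\bm\lambda_w^{-\nub}=-\overline{\bm\lambda_w^{\nub}}$. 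Integrating over $\xib$ and invoking Plancherel's theorem yields $\|\mathcal{D}_w^{\pm\nub}\ub\|_{L^2(\R^d)}\le \|\mathcal{G}_w^{\nub}\ub\|_{L^2(\R^d;\R^{d\times d})}$ for all $\ub\in C_c^\infty(\Om;\R^d)$.

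Now fix $\ub\in\cS_w^{\nub}(\Omega;\R^d)$. By \Cref{thm:density} (applied componentwise), there is $\{\ub^{(n)}\}\subset C_c^\infty(\Om;\R^d)$ with $\ub^{(n)}\to \ub$ in $\cS_w^{\nub}(\Omega;\R^d)$, so in particular $\mathcal{G}_w^{\nub}\ub^{(n)}=\mathfrak{G}_w^{\nub}\ub^{(n)}\to \mathfrak{G}_w^{\nub}\ub$ in $L^2(\R^d;\R^{d\times d})$ (using \Cref{cor:pvopeqdistop}). Applying the smooth-case bound to $\ub^{(n)}-\ub^{(m)}$ shows that $\{\mathcal{D}_w^{\pm\nub}\ub^{(n)}\}$ is Cauchy in $L^2(\R^d)$; call its limit $\vb^{\pm}$.

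It remains to identify $\vb^{\pm}$ with $\mathfrak{D}_w^{\pm\nub}\ub$. For any test function $\bm\phi\in C_c^\infty(\R^d)$, \Cref{prop:hbibp}(2) gives
\[
\int_{\R^d}\mathcal{D}_w^{\pm\nub}\ub^{(n)}(\xb)\,\bm\phi(\xb)\,d\xb=-\int_{\R^d}\ub^{(n)}(\xb):\mathcal{G}_w^{\mp\nub}\bm\phi(\xb)\,d\xb.
\]
Passing to the limit as $n\to\infty$ (the left-hand side by $L^2$ convergence of $\mathcal{D}_w^{\pm\nub}\ub^{(n)}\to \vb^{\pm}$, the right-hand side by $L^2$ convergence of $\ub^{(n)}\to\ub$ together with $\mathcal{G}_w^{\mp\nub}\bm\phi\in L^2$) gives $\int \vb^{\pm}\bm\phi\,d\xb=-\int \ub:\mathcal{G}_w^{\mp\nub}\bm\phi\,d\xb=\langle\mathfrak{D}_w^{\pm\nub}\ub,\bm\phi\rangle$, so $\mathfrak{D}_w^{\pm\nub}\ub=\vb^{\pm}\in L^2(\R^d)$. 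Taking the $L^2$ limit of the inequality $\|\mathcal{D}_w^{\pm\nub}\ub^{(n)}\|_{L^2}\le \|\mathcal{G}_w^{\nub}\ub^{(n)}\|_{L^2}$ yields the desired bound, and linearity of $\mathfrak{D}_w^{\pm\nub}$ together with this bound gives operator norm at most one. The approximating sequence $\{\ub^{(n)}\}$ constructed above furnishes the last claim.

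The conceptually essential step is the pointwise Fourier inequality; the main technical obstacle is ensuring that the approximating sequence has both gradient and divergence converging to the \emph{distributional} counterparts, which is handled cleanly by \Cref{cor:pvopeqdistop} and the integration-by-parts identity of \Cref{prop:hbibp}(2).
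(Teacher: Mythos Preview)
Your proof is correct and follows essentially the same approach as the paper: establish the inequality on $C_c^\infty$ via Fourier analysis, then extend by density (\Cref{thm:density}) and identify the limit using \Cref{prop:hbibp}(2). The only minor difference is that you bound $|\bm\lambda_w^{\pm\nub}(\xib)^T\hat{\ub}(\xib)|\le |\bm\lambda_w^{\nub}(\xib)||\hat{\ub}(\xib)|$ directly, whereas the paper first proves the identity $\|\mathcal{D}_w^{-\nub}\ub\|_{L^2}^2=\sum_{i,j}(\mathcal{G}_iu_j,\mathcal{G}_ju_i)_{L^2}$ and then applies Young's inequality---an identity they reuse later in \Cref{lem:equivelasenergy}.
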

\begin{proof}
We first show the inequality for smooth functions with compact support, that is, assuming $\ub\in C^\infty_c(\Om;\R^d)$, 
\begin{equation}\label{divL2normlegradL2norm_smooth}
    \|\mathcal{D}_w^{\pm\nub} \ub\|_{L^2(\R^d)}\le \|\mathcal{G}_w^{\nub} \ub\|_{L^2(\R^d;\R^{d\times d})}.
\end{equation}
We only prove \cref{divL2normlegradL2norm_smooth} for $\mathcal{D}^{-\nub}_w$ since the result also holds for $\mathcal{D}^{\nub}_w$ by noticing that $\|\mathcal{G}_w^{\nub} \ub\|_{L^2(\R^d;\R^{d\times d})}=\|\mathcal{G}_w^{-\nub} \ub\|_{L^2(\R^d;\R^{d\times d})}$ using Plancherel's theorem. For $1\le i\le d$ and a scalar function $p:\R^d\to\R$, we introduce the notation $\mathcal{G}_i$ given by
\[\mathcal{G}^{\nub}_w p:=(\mathcal{G}_1 p,\cdots,\mathcal{G}_d p)^T.\] We use Fourier transform to show that 
\beq\label{eq:divL2normeqgradip}
\left\|\mathcal{D}_w^{-\nub} \ub\right\|_{L^2(\R^d)}^2=\sum_{i,j=1}^d \left(\mathcal{G}_i u_j,\mathcal{G}_j u_i\right)_{L^2(\R^d)},
\eeq
which implies the desired result by Young's inequality for products. 
By \Cref{lem:FT} and $\bm\lambda_w^{-\nub}(\bm\xi)=-\overline{\bm\lambda_w^{\nub}(\bm\xi)}$, for $\xib\in\R^d$, \[\mathcal{F}(\mathcal{D}_w^{-\nub} \ub)(\xib)=-\sum_{i=1}^d \overline{\lambda_i(\xib)}\hat{u}_i(\xib)\quad\text{and}\quad\mathcal{F}(\mathcal{G}_i u_j)(\xib)=\lambda_i(\xib)\hat{u}_j(\xib),\ 1\le i,j\le d,\]
where the Fourier symbol \[\bm\lambda_w^{\nub}(\bm\xi):=(\lambda_1(\xib),\cdots,\lambda_d(\xib))^T.\]
Therefore, by Plancherel's theorem, we conclude the proof of \cref{divL2normlegradL2norm_smooth} as
\begin{align*}
    &\sum_{i,j=1}^d \left(\mathcal{G}_i u_j,\mathcal{G}_j u_i\right)_{L^2(\R^d)}
    =\sum_{i,j=1}^d \left(\mathcal{F}(\mathcal{G}_i u_j),\mathcal{F}(\mathcal{G}_j u_i)\right)_{L^2(\R^d;\C^d)}\\  
    =&\sum_{i,j=1}^d \int_{\R^d}\lambda_i(\xib)\hat{u}_j(\xib)\overline{\lambda_j(\xib)}\overline{\hat{u}_i(\xib)}d\xib = \int_{\R^d}\sum_{j=1}^d \overline{\lambda_j(\xib)}\hat{u}_j(\xib)\sum_{i=1}^d \lambda_i(\xib)\overline{\hat{u}_i(\xib)}d\xib\\
    = &\left\|\mathcal{F}(\mathcal{D}_w^{-\nub} \ub)\right\|_{L^2(\R^d;\C^d)}^2=\left\|\mathcal{D}_w^{-\nub} \ub\right\|_{L^2(\R^d)}^2.
\end{align*}
By \Cref{thm:density}, there exists $\{\ub^{(n)}\}_{n=1}^\infty\subset C^\infty_c(\Om;\R^d)$ such that $\ub^{(n)}\to \ub$ in $\cS_w^{\nub}(\Omega;\R^d)$, and in particular, $\mathcal{G}^{\nub}_w \ub^{(n)}\to \mathfrak{G}^{\nub}_w \ub$ in $L^2(\R^d;\R^{d\times d})$ as $n\to\infty$. Applying \cref{divL2normlegradL2norm_smooth} fo $\ub^{(n)}$, $\{\mathcal{D}^{\pm\nub}_w \ub^{(n)}\}_{n\in\N}$ is a Cauchy sequence in $L^2(\R^d)$, and thus has a limit in $L^2(\R^d)$ by completeness. Since $\ub^{(n)}\to \ub$ in $L^2(\R^d;\R^d)$, by applying \Cref{prop:hbibp}(2) to $\ub^{(n)}$ one derives that the limit is $\mathfrak{D}_{w}^{\pm\nub} \ub\in L^2(\R^d)$ with the desired estimate. 
\end{proof}

\begin{prop}\label{prop:curlisL2}
Let $d=3$. 
For $\ub\in \cS_w^{\nub}(\Omega;\R^3)$, $\mathfrak{C}_w^{\pm\nub} \ub\in L^2(\R^3;\R^3)$ and \[\|\mathfrak{C}_w^{\pm\nub} \ub\|_{L^2(\R^3;\R^3)}\le \|\mathfrak{G}^{\nub}_w \ub\|_{L^2(\R^3;\R^{3\times 3})}.\]Thus, $\mathfrak{C}_{w}^{\pm\nub}:\cS_w^{\nub}(\Omega;\R^3)\to L^2(\R^3;\R^3)$ is a bounded linear operator with operator norm no more than $1$.  In addition, there exists $\{\ub^{(n)}\}_{n=1}^\infty\subset C^\infty_c(\Om;\R^3)$ such that $\ub^{(n)}\to \ub$ in $\cS_w^{\nub}(\Omega;\R^3)$ and $\mathcal{C}_w^{\pm\nub} \ub^{(n)}\to \mathfrak{C}_w^{\pm\nub} \ub$ in $L^2(\R^3;\R^3)$ as $n\to\infty$.
\end{prop}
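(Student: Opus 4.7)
The plan is to mirror the proof of \Cref{prop:divisL2}: first establish the inequality for smooth, compactly supported functions via the Fourier symbols from \Cref{lem:FT}, then extend to $\cS_w^{\nub}(\Om;\R^3)$ using the density result of \Cref{thm:density}, and finally identify the $L^2$ limit of $\mathcal{C}_w^{\pm\nub}\ub^{(n)}$ with the distributional curl $\mathfrak{C}_w^{\pm\nub}\ub$ by passing to the limit in the integration-by-parts formula of \Cref{prop:hbibp}(3).

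For the smooth-function step, I would take $\ub\in C_c^\infty(\Om;\R^3)$ and invoke \Cref{lem:FT} to write $\mathcal{F}(\mathcal{C}_w^{\nub}\ub)(\xib)=\bm\lambda_w^{\nub}(\xib)\times\hat{\ub}(\xib)$ and $\mathcal{F}(\mathcal{G}_w^{\nub}\ub)(\xib)=\bm\lambda_w^{\nub}(\xib)\otimes\hat{\ub}(\xib)$. Since $|\bm\lambda_w^{\nub}(\xib)\otimes\hat{\ub}(\xib)|^2=|\bm\lambda_w^{\nub}(\xib)|^2|\hat{\ub}(\xib)|^2$, Plancherel's theorem reduces the claim to the pointwise bound $|\bm\lambda_w^{\nub}(\xib)\times\hat{\ub}(\xib)|^2\le |\bm\lambda_w^{\nub}(\xib)|^2|\hat{\ub}(\xib)|^2$, which is an instance of the complex Lagrange identity
\[
    |\ab\times\bb|^2 = |\ab|^2|\bb|^2 - |\ab\cdot\bb|^2, \qquad \ab,\bb\in\C^3,
\]
with Hermitian inner product $\ab\cdot\bb=\sum_i a_i\overline{b_i}$ as introduced at the end of Section 2.1; this is verified by a direct expansion using $\epsilon_{ijk}\epsilon_{ilm}=\delta_{jl}\delta_{km}-\delta_{jm}\delta_{kl}$. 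The analogous bound for $\mathcal{C}_w^{-\nub}$ follows from $\bm\lambda_w^{-\nub}=-\overline{\bm\lambda_w^{\nub}}$, which has the same modulus.

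With the smooth-function inequality in hand, the extension is routine. By \Cref{thm:density} I would pick $\{\ub^{(n)}\}\subset C_c^\infty(\Om;\R^3)$ converging to $\ub$ in $\cS_w^{\nub}(\Om;\R^3)$, so that $\mathcal{G}_w^{\nub}\ub^{(n)}\to\mathfrak{G}_w^{\nub}\ub$ in $L^2(\R^3;\R^{3\times 3})$; applying the smooth-function estimate to differences $\ub^{(n)}-\ub^{(m)}$ shows $\{\mathcal{C}_w^{\pm\nub}\ub^{(n)}\}$ is Cauchy in $L^2(\R^3;\R^3)$, with some limit $\vb^{\pm}$. To identify $\vb^{\pm}$ with $\mathfrak{C}_w^{\pm\nub}\ub$, I would pass to the limit in \Cref{prop:hbibp}(3) applied to $\ub^{(n)}$ tested against an arbitrary $\bm\phi\in C_c^\infty(\R^3;\R^3)$, using $\ub^{(n)}\to\ub$ in $L^2(\R^3;\R^3)$ and $\mathcal{C}_w^{\mp\nub}\bm\phi\in L^2(\R^3;\R^3)$ from \Cref{lem:opforsmoothfct}; the norm bound on the limit then follows by lower semicontinuity of $\|\cdot\|_{L^2}$. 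No real obstacle is anticipated; the only slightly delicate ingredient is the complex Lagrange identity, which can alternatively be sidestepped by noticing that the same Fourier computation yields the exact identity $\|\mathcal{C}_w^{\nub}\ub\|_{L^2}^2+\|\mathcal{D}_w^{-\nub}\ub\|_{L^2}^2=\|\mathcal{G}_w^{\nub}\ub\|_{L^2}^2$ for smooth $\ub$, reducing the desired inequality to the divergence estimate already proved in \Cref{prop:divisL2}.
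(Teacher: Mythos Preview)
Your proposal is correct and follows essentially the same route as the paper: prove the smooth-function inequality via the Fourier symbols and the bound $|\ab\times\bb|\le|\ab||\bb|$ for $\ab,\bb\in\C^3$, then extend by density (\Cref{thm:density}) and identify the limit using \Cref{prop:hbibp}(3). The paper simply asserts the complex cross-product bound without justification, so your explicit Lagrange-identity computation and the alternative identity $\|\mathcal{C}_w^{\nub}\ub\|_{L^2}^2+\|\mathcal{D}_w^{-\nub}\ub\|_{L^2}^2=\|\mathcal{G}_w^{\nub}\ub\|_{L^2}^2$ are nice additions but not departures in approach.
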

\begin{proof}
    Using the density result \Cref{thm:density} and integration by parts formula \Cref{prop:hbibp}(3) as in the last paragraph of the proof of \Cref{prop:divisL2}, it suffices to show 
    \begin{equation}\label{curlL2normlegradL2norm_smooth}
        \|\mathcal{C}_w^{\pm\nub} \ub\|_{ L^2(\R^3;\R^3)}\le \|\mathcal{G}_w^{\nub} \ub\|_{L^2(\R^3;\R^{3\times 3})}
    \end{equation}
    for $\ub\in C^\infty_c(\Om;\R^3)$. To show \cref{curlL2normlegradL2norm_smooth}, by Fourier transform, it suffices to show that \[\|\mathcal{F}(\mathcal{C}^{\nub}_w \ub)\|_{L^2(\R^3;\R^3)}\le \|\mathcal{F}(\mathcal{G}^{\nub}_w \ub)\|_{L^2(\R^3;\R^{3\times 3})}.\]
Since $|\ab\times\bb|\le |\ab||\bb|$ for $\ab,\bb\in\C^3$, by \Cref{lem:FT} \cref{curlL2normlegradL2norm_smooth} holds as
\begin{align*}
    \|\mathcal{F}(\mathcal{C}^{\nub}_w \ub)\|_{L^2(\R^3;\R^3)}^2&=\int_{\R^3}|\bm\lambda_w^{\nub}(\xib)\times\hat{\ub}(\xib)|^2 d\xib\le \int_{\R^3}|\bm\lambda_w^{\nub}(\xib)|^2|\hat{\ub}(\xib)|^2 d\xib\\
    &=\sum_{j=1}^3\int_{\R^3}|\bm\lambda_w^{\nub}(\xib)\hat{u}_j(\xib)|^2 d\xib=\|\mathcal{F}(\mathcal{G}^{\nub}_w \ub)\|_{L^2(\R^3;\R^{3\times 3})}^2.
\end{align*}
\end{proof}

Recall that $\mathfrak{D}_w^{-\nub} \vb\in (C^\infty_c(\R^d))'\subset (C^\infty_c(\Om))'$ for $\vb\in L^2(\R^d;\R^d)$. By \Cref{thm:density} one can define $\mathfrak{D}_w^{-\nub} \vb\in (\cS_w^{\nub}(\Omega))^\ast$ for $\vb\in L^2(\R^d;\R^d)$. More precisely, given $u\in \cS_w^{\nub}(\Omega)$, define \beq\label{def:divL2todualsp}
\langle \mathfrak{D}_w^{-\nub} \vb,u\rangle:=-\int_{\R^d}\vb(\xb)\cdot \mathfrak{G}^{\nub}_wu(\xb)d\xb.
\eeq
Then \[|\langle \mathfrak{D}_w^{-\nub} \vb,u\rangle|\le \|\vb\|_{L^2(\R^d;\R^d)}\|\mathfrak{G}^{\nub}_w u\|_{L^2(\R^d;\R^{d})}\le \|\vb\|_{L^2(\R^d;\R^d)}\|u\|_{\cS_w^{\nub}(\Omega)}\]implies that $\mathfrak{D}_w^{-\nub} \vb\in (\cS_w^{\nub}(\Omega))^\ast$. Therefore, $\mathfrak{D}_w^{-\nub}:L^2(\R^d;\R^d)\to (\cS_w^{\nub}(\Omega))^\ast$ is a bounded linear operator with operator norm no more than $1$. The same property holds for $\mathfrak{G}^{\nub}_w $ and $\mathfrak{C}^{\nub}_w $ using \Cref{prop:divisL2} and \Cref{prop:curlisL2}, respectively. We summarize this observation in the following proposition.
\begin{prop}
\label{prop:operators_from_L2}    $\mathfrak{D}_w^{\mp\nub}:L^2(\R^d;\R^d)\to (\cS_w^{\pm\nub}(\Omega))^\ast$ defined by \cref{def:divL2todualsp} is a bounded linear operator with operator norm no more than $1$.
    We can similarly define $\mathfrak{G}_w^{\pm\nub}:L^2(\R^d)\to (\cS^{\nub}_w(\Om;\R^d))^\ast$ and $\mathfrak{C}_w^{\pm\nub}:L^2(\R^3;\R^3)\to (\cS^{\nub}_w(\Om;\R^3))^\ast$  and they are bounded linear operators with operator norms no more than $1$. 
\end{prop}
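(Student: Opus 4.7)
The statement is essentially a bookkeeping of the $L^2$--$\cS_w$ duality, most of which has already been carried out for $\mathfrak{D}_w^{-\nub}$ in the paragraph preceding the proposition. The plan is to repeat that construction for each of the operators listed. For $\mathfrak{D}_w^{\mp\nub}:L^2(\R^d;\R^d)\to (\cS_w^{\pm\nub}(\Om))^\ast$, the definition \cref{def:divL2todualsp} and the estimate following it already give the desired bound for the sign $-\nub$; the companion sign is obtained by swapping $\nub\leftrightarrow -\nub$ throughout, since the kernel assumption \cref{eq:kernelassumption} is invariant under this change.

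For $\mathfrak{G}_w^{\pm\nub}:L^2(\R^d)\to(\cS_w^{\mp\nub}(\Om;\R^d))^\ast$ I would set, for $u\in L^2(\R^d)$ and $\phib\in \cS_w^{\mp\nub}(\Om;\R^d)$,
\[
\langle \mathfrak{G}_w^{\pm\nub}u,\phib\rangle := -\int_{\R^d} u(\xb)\,\mathfrak{D}_w^{\mp\nub}\phib(\xb)\,d\xb,
\]
in direct analogy with the distributional definition on $C_c^\infty$ test functions. By \Cref{prop:divisL2}, $\mathfrak{D}_w^{\mp\nub}\phib\in L^2(\R^d)$ with $\|\mathfrak{D}_w^{\mp\nub}\phib\|_{L^2}\le \|\phib\|_{\cS_w^{\mp\nub}(\Om;\R^d)}$, so Cauchy--Schwarz yields $|\langle \mathfrak{G}_w^{\pm\nub}u,\phib\rangle|\le \|u\|_{L^2}\|\phib\|_{\cS_w^{\mp\nub}(\Om;\R^d)}$, i.e.\ operator norm at most one. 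An entirely parallel construction handles $\mathfrak{C}_w^{\pm\nub}:L^2(\R^3;\R^3)\to(\cS_w^{\mp\nub}(\Om;\R^3))^\ast$ via
\[
\langle \mathfrak{C}_w^{\pm\nub}\vb,\phib\rangle := \int_{\R^3} \vb(\xb)\cdot \mathfrak{C}_w^{\mp\nub}\phib(\xb)\,d\xb,
\]
with the required $L^2$-bound on $\mathfrak{C}_w^{\mp\nub}\phib$ furnished by \Cref{prop:curlisL2}.

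Consistency with the distributional definitions from the previous subsection is automatic: on smooth compactly supported test functions $\mathfrak{G}_w,\mathfrak{D}_w,\mathfrak{C}_w$ coincide with the principal-value operators $\mathcal{G}_w,\mathcal{D}_w,\mathcal{C}_w$ by \Cref{cor:pvopeqdistop}, and the density statement \Cref{thm:density} identifies these dual-space extensions as the unique continuous ones. There is no essential obstacle in this proof: it is a direct repackaging of the $L^2$ bounds in \Cref{prop:divisL2} and \Cref{prop:curlisL2} as continuous linear functionals through the adjoint pairing, and the substantive work already lives in those two propositions.
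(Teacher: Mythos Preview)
Your approach is the same as the paper's: extend the adjoint pairing from smooth test functions to the whole Sobolev-type space using the $L^2$ bounds of \Cref{prop:divisL2} and \Cref{prop:curlisL2}, exactly as the paper indicates in the sentence immediately preceding the proposition.

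One small correction: the proposition as stated has $\mathfrak{G}_w^{\pm\nub}$ and $\mathfrak{C}_w^{\pm\nub}$ landing in the dual of $\cS_w^{\nub}(\Om;\R^d)$ with \emph{fixed} $\nub$, not in $(\cS_w^{\mp\nub}(\Om;\R^d))^\ast$ as you wrote. This is possible precisely because \Cref{prop:divisL2} and \Cref{prop:curlisL2} bound \emph{both} signs $\mathfrak{D}_w^{+\nub}\phib$ and $\mathfrak{D}_w^{-\nub}\phib$ (respectively $\mathfrak{C}_w^{\pm\nub}\phib$) by $\|\mathfrak{G}_w^{\nub}\phib\|_{L^2}$ for $\phib\in\cS_w^{\nub}(\Om;\R^d)$. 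So take your test function $\phib$ in $\cS_w^{\nub}$ rather than $\cS_w^{\mp\nub}$; the rest of your argument is unchanged.
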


Based on the above results, the nonlocal vector identities in \Cref{sec:nvismoothfct} hold for functions in the space $\cS_w^{\nub}(\Om;\R^N)$. 
The vector identities shown below are crucial for establishing the nonlocal Helmholtz decomposition in \Cref{subsec:helmhotlz}.

\begin{lem}\label{lem:vanishidentity_generalfct}
Let $d=3$. Then for $u\in \cS_w^{\nub}(\Om)$ and $\vb\in \cS_w^{\nub}(\Om;\R^3)$, in the sense of distributions,
\begin{equation}\label{curlgradeq0_generalfct}
    \mathfrak{C}^{\nub}_w  \mathfrak{G}^{\nub}_w u=0,
\end{equation} and
\begin{equation}\label{divcurleq0_generalfct}
    \mathfrak{D}^{\nub}_w  \mathfrak{C}^{\nub}_w  \vb=0.
\end{equation}
\end{lem}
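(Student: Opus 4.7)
The strategy is to reduce both identities to the already-established smooth-function versions in \Cref{lem:vanishidentity} by density, and to verify the smooth case on the Fourier side using the symbol calculus in \Cref{lem:FT}. The key enablers are \Cref{thm:density} (so that $u$ and $\vb$ can be approximated by $C_c^\infty(\Om)$ functions in $\cS_w^{\nub}$-norm) together with \Cref{prop:divisL2,prop:curlisL2} (which give $L^2$-convergence of the principal-value operators applied to those approximants toward the distributional operators).

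For \cref{curlgradeq0_generalfct}, fix a test field $\phib\in C_c^\infty(\R^3;\R^3)$. Unraveling the definition of the distributional curl,
\[
\langle \mathfrak{C}^{\nub}_w\mathfrak{G}^{\nub}_w u,\phib\rangle=\int_{\R^3} \mathfrak{G}^{\nub}_w u\cdot\mathcal{C}^{-\nub}_w\phib\,d\xb.
\]
By \Cref{thm:density}, choose $u^{(n)}\in C_c^\infty(\Om)$ with $u^{(n)}\to u$ in $\cS_w^{\nub}(\Om)$; then $\mathcal{G}^{\nub}_w u^{(n)}\to \mathfrak{G}^{\nub}_w u$ in $L^2(\R^3;\R^3)$, and $\mathcal{C}^{-\nub}_w\phib\in L^2(\R^3;\R^3)$ by \Cref{lem:opforW1pfct}. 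Passing to the limit and using Plancherel's theorem together with the Fourier symbols in \Cref{lem:FT} and the identity $\bm\lambda_w^{-\nub}=-\overline{\bm\lambda_w^{\nub}}$ (see the discussion following \cref{deffouriersymboldiv}), the Fourier-side integrand for each $n$ becomes
\[
(\bm\lambda_w^{\nub}\hat{u}^{(n)})\cdot \overline{(\bm\lambda_w^{-\nub}\times\hat{\phib})}=-\hat{u}^{(n)}\,\bm\lambda_w^{\nub}\cdot\bigl(\bm\lambda_w^{\nub}\times\overline{\hat{\phib}}\bigr),
\]
which vanishes identically by the scalar triple product (exactly as in the proof of \Cref{lem:vanishidentity}). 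Hence each approximating integral is zero, and by $L^2$-continuity the limit is zero as well.

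For \cref{divcurleq0_generalfct}, the argument is entirely parallel. Fix $\phi\in C_c^\infty(\R^3)$; by definition,
\[
\langle \mathfrak{D}^{\nub}_w\mathfrak{C}^{\nub}_w\vb,\phi\rangle=-\int_{\R^3}\mathfrak{C}^{\nub}_w\vb\cdot\mathcal{G}^{-\nub}_w\phi\,d\xb.
\]
Use \Cref{prop:curlisL2} to select $\vb^{(n)}\in C_c^\infty(\Om;\R^3)$ with $\vb^{(n)}\to\vb$ in $\cS_w^{\nub}(\Om;\R^3)$ and $\mathcal{C}^{\nub}_w\vb^{(n)}\to\mathfrak{C}^{\nub}_w\vb$ in $L^2(\R^3;\R^3)$; note $\mathcal{G}^{-\nub}_w\phi\in L^2(\R^3;\R^3)$ by \Cref{lem:opforW1pfct}. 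Plancherel and \Cref{lem:FT} transform each approximating integral into
\[
\int_{\R^3}(\bm\lambda_w^{\nub}\times\hat{\vb}^{(n)})\cdot\overline{\bm\lambda_w^{-\nub}\hat{\phi}}\,d\xib=-\int_{\R^3}\overline{\hat{\phi}}\,\bm\lambda_w^{\nub}\cdot\bigl(\bm\lambda_w^{\nub}\times\hat{\vb}^{(n)}\bigr)d\xib=0,
\]
again by the scalar triple product.

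The main potential pitfall is bookkeeping rather than analysis: one must be careful to track the sign flip between $\nub$ and $-\nub$ in the duality pairings, and to confirm that the Fourier identities invoked truly apply at the $L^2$ level for smooth compactly supported approximants (which follows from \Cref{lem:opforsmoothfct,lem:FT}). Once the $L^2$ framework is in place, the algebraic vanishing $\bm\lambda_w^{\nub}\cdot(\bm\lambda_w^{\nub}\times\cdot)=0$ does the work, and the density statement in \Cref{thm:density} transfers the result to arbitrary elements of $\cS_w^{\nub}$.
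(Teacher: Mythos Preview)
Your proof is correct and follows essentially the same approach as the paper: write out the distributional pairing, approximate by $C_c^\infty$ functions via \Cref{thm:density}, and use the Fourier-side vanishing $\bm\lambda_w^{\nub}\cdot(\bm\lambda_w^{\nub}\times\cdot)=0$. The only cosmetic difference is that the paper integrates by parts once more to put both operators on the test function and then invokes \Cref{lem:vanishidentity} directly (i.e., $\mathcal{D}^{-\nub}_w\mathcal{C}^{-\nub}_w\bm\phi=0$), whereas you keep one operator on each side and carry out the Plancherel computation explicitly; the underlying mechanism is identical.
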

\begin{proof}
    Since $u\in \cS_w^{\nub}(\Om)$, $\mathfrak{G}^{\nub}_w u\in L^2(\R^3;\R^3)$.  By definition, $\mathfrak{C}^{\nub}_w  \mathfrak{G}^{\nub}_w u\in (C_c^\infty(\R^3;\R^3))'$ and for $\bm\phi\in C_c^\infty(\R^3;\R^3)$,
    \begin{equation}
    \langle \mathfrak{C}_w^{\nub} \mathfrak{G}^{\nub}_w u,\bm\phi\rangle=\int_{\R^3} \mathfrak{G}^{\nub}_w u(\xb)\cdot\mathcal{C}_w^{-\nub}\bm\phi(\xb)d\xb=\lim_{n\to\infty} \int_{\R^3} \mathcal{G}^{\nub}_w u^{(n)}(\xb)\cdot\mathcal{C}_w^{-\nub}\bm\phi(\xb)d\xb,
\end{equation} 
where the sequence $\left\{u^{(n)}\right\}_{n=1}^\infty\subset C^\infty_c(\R^3)$ is chosen according to \Cref{thm:density} such that $\mathcal{G}^{\nub}_w u^{(n)}\to \mathfrak{G}^{\nub}_w u$ in $L^2(\R^3;\R^3)$. By integration by parts formula, \[\int_{\R^3} \mathcal{G}^{\nub}_w u^{(n)}(\xb)\cdot\mathcal{C}_w^{-\nub}\bm\phi(\xb)d\xb=\int_{\R^3}u^{(n)}(\xb)\mathcal{D}^{-\nub}_w \mathcal{C}_w^{-\nub}\bm\phi(\xb)d\xb.\] 
Since $\mathcal{D}^{-\nub}_w \mathcal{C}_w^{-\nub}\bm\phi=0$ by \Cref{lem:vanishidentity}, we have $\mathfrak{C}^{\nub}_w  \mathfrak{G}^{\nub}_w u=0 \in (C_c^\infty(\R^3;\R^3))' $. 
Then by \Cref{prop:operators_from_L2},
$\mathfrak{C}^{\nub}_w  \mathfrak{G}^{\nub}_w u=0 \in (\cS^{\nub}_w(\Om;\R^3))^\ast$. 
 \Cref{divcurleq0_generalfct} can be shown similarly.
\end{proof}

\begin{rem}
\label{rem:2dvanishidentity}
For $d=2$, one can show by similar arguments as those in \Cref{lem:vanishidentity} and \Cref{lem:vanishidentity_generalfct} that
\[
 \mathfrak{D}^{\nub}_w  \begin{pmatrix}
    0 & 1\\
    -1 & 0
    \end{pmatrix}  \mathfrak{G}^{\nub}_w u= 0
\]
for $u\in \cS_w^{\nub}(\Om)$,
which can be seen as a 2d version of \cref{curlgradeq0_generalfct} or \cref{divcurleq0_generalfct}.
\end{rem}

\begin{lem}\label{lem:veccalid1_generalfct}
For $\ub\in \cS_w^{\nub}(\Om;\R^2)$,
\begin{equation}\label{veccalid1_generalfct}
   \mathfrak{D}_w^{-\nub}\mathfrak{G}_w^{\nub} \ub=\mathfrak{G}_{w}^{\nub} \mathfrak{D}_w^{-\nub} \ub-
    \begin{pmatrix}
    0 & 1\\
    -1 & 0
    \end{pmatrix}
    \mathfrak{G}_{w}^{-\nub} \mathfrak{D}_w^{\nub} \left[\begin{pmatrix}
    0 & 1\\
    -1 & 0
    \end{pmatrix}\ub\right].
\end{equation}
 \end{lem}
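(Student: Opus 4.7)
My plan is to approximate $\ub$ by smooth compactly supported fields and pass to the limit in the identity from \Cref{lem:veccalid1}, following the same recipe used for \Cref{lem:vanishidentity_generalfct}. Write $J := \begin{pmatrix} 0 & 1 \\ -1 & 0 \end{pmatrix}$; since $J$ acts on a vector field componentwise, it maps $\cS_w^{\nub}(\Om;\R^2)$ into itself. The three composites appearing in \cref{veccalid1_generalfct} all land in the dual $(\cS_w^{\nub}(\Om;\R^2))^*$: each one factors as an $L^2$-valued operator (the definition of $\cS_w^{\nub}$ gives $\mathfrak{G}_w^{\nub}\ub \in L^2$, while \Cref{prop:divisL2} gives $\mathfrak{D}_w^{\pm\nub}\ub \in L^2$) followed by a dual-valued operator from \Cref{prop:operators_from_L2}.

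First, by \Cref{thm:density} I pick $\ub^{(n)} \in C_c^\infty(\Om;\R^2)$ with $\ub^{(n)} \to \ub$ in $\cS_w^{\nub}(\Om;\R^2)$; componentwise action then yields $J\ub^{(n)} \to J\ub$ in the same space. For each fixed $n$, \Cref{lem:veccalid1} produces the desired identity pointwise a.e.\ with $\ub^{(n)}$ in place of $\ub$, hence as an equality in $L^2$ and \emph{a fortiori} in $(\cS_w^{\nub}(\Om;\R^2))^*$.

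Next, I pass to the limit term by term. For the left-hand side, $\mathcal{G}_w^{\nub}\ub^{(n)} \to \mathfrak{G}_w^{\nub}\ub$ in $L^2(\R^2;\R^{2\times 2})$ by construction, and then the bounded map into the dual provided by \Cref{prop:operators_from_L2} converts this into convergence of $\mathcal{D}_w^{-\nub}\mathcal{G}_w^{\nub}\ub^{(n)}$ to $\mathfrak{D}_w^{-\nub}\mathfrak{G}_w^{\nub}\ub$. For the first right-hand term, \Cref{prop:divisL2} first gives $\mathcal{D}_w^{-\nub}\ub^{(n)} \to \mathfrak{D}_w^{-\nub}\ub$ in $L^2(\R^2)$, after which \Cref{prop:operators_from_L2} takes over to produce convergence in the dual. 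The second right-hand term is completely analogous, with $J\ub^{(n)}$ in place of $\ub^{(n)}$ and $\pm\nub$ swapped; premultiplication by the constant matrix $J$ is continuous, so it commutes with the limit.

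I do not expect a genuinely hard analytic step: everything is already packaged into \Cref{prop:divisL2} and \Cref{prop:operators_from_L2}, and the density result \Cref{thm:density} is what drives the approximation. The only point requiring care is the bookkeeping of dual spaces. Once it is confirmed that all three composites are defined on the same domain and land in the same dual (which reduces to $J$-invariance of $\cS_w^{\nub}(\Om;\R^2)$ and to the uniform operator norm bounds in the two propositions), summing the limits yields the identity in $(\cS_w^{\nub}(\Om;\R^2))^*$.
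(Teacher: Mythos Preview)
Your proof is correct and uses essentially the same ingredients as the paper: density (\Cref{thm:density}), boundedness of the composite operators (\Cref{prop:divisL2} and \Cref{prop:operators_from_L2}), and the smooth identity \Cref{lem:veccalid1}. The only cosmetic difference is that the paper first rewrites the dual identity as the symmetric inner-product form \cref{eq:veccalid1_wk} and then approximates the test function $\vb$ rather than $\ub$; both routes are equally valid.
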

 \begin{proof}
Notice that the left hand side and the right hand side of \eqref{veccalid1_generalfct} are understood as 
 elements in $(\cS_w^{\nub}(\Om;\R^2))^\ast$ by \Cref{prop:operators_from_L2}, 
 i.e., for any $\vb \in\cS_w^{\nub}(\Om;\R^2)$, we need
 \beq
 \label{eq:veccalid1_wk}
( \mathfrak{G}_{w}^{\nub}\ub ,  \mathfrak{G}_{w}^{\nub}\vb )_{L^2(\R^2;\R^{2\times 2})}= (\mathfrak{D}_w^{-\nub}\ub, \mathfrak{D}_w^{-\nub}\vb)_{L^2(\R^2)} + ( \mathfrak{D}_w^{\nub} J \ub, \mathfrak{D}_w^{\nub} J \vb)_{L^2(\R^2;\R^2)},
 \eeq
 where $J=\begin{pmatrix}
    0 & 1\\
    -1 & 0
    \end{pmatrix}$.
 First notice that \cref{eq:veccalid1_wk} holds for all functions $\vb \in C_c^\infty(\Om;\R^2)$ by \Cref{lem:veccalid1} and the definitions of $\mathfrak{G}_{w}^{\nub}$ and $\mathfrak{D}_w^{\nub}$. Now for any $\vb \in\cS_w^{\nub}(\Om;\R^2)$, using \Cref{thm:density} and \Cref{prop:divisL2}, there exists a sequence $\vb^{(n)}\in C_c^\infty(\Om;\R^2)$ such that $\mathcal{G}_w^{\nub}\vb^{(n)}\to \mathfrak{G}_{w}^{\nub}\vb$ in $L^2(\R^2,\R^{2\times 2})$ and $\mathcal{D}_w^{\pm\nub}\vb^{(n)}\to \mathfrak{D}_w^{\pm\nub}\vb$ in $L^2(\R^2)$. Then \cref{eq:veccalid1_wk} holds for $\vb\in \cS_w^{\nub}(\Om;\R^2)$ by taking limits.     
 \end{proof}
 
 \begin{lem}\label{lem:veccalid2_generalfct}
For $\ub\in \cS_w^{\nub}(\Om;\R^3)$,
\begin{equation}\label{veccalid2_generalfct}
   \mathfrak{D}_w^{-\nub}\mathfrak{G}_w^{\nub} \ub=\mathfrak{G}_{w}^{\nub} \mathfrak{D}_w^{-\nub} \ub-\mathfrak{C}_{w}^{-\nub}\mathfrak{C}_{w}^{\nub} \ub.
\end{equation}
\end{lem}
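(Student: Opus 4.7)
The plan is to mirror the approach of \Cref{lem:veccalid1_generalfct}. Both sides of \cref{veccalid2_generalfct} are interpreted as elements of $(\cS_w^{\nub}(\Om;\R^3))^\ast$ via \Cref{prop:divisL2}, \Cref{prop:curlisL2}, and \Cref{prop:operators_from_L2}: for $\ub\in\cS_w^{\nub}(\Om;\R^3)$, each of $\mathfrak{G}_w^{\nub}\ub$, $\mathfrak{D}_w^{-\nub}\ub$, $\mathfrak{C}_w^{\nub}\ub$ lies in $L^2$, so applying a further distributional operator produces an element of the dual space. Unpacking the pairing with a test function $\vb\in\cS_w^{\nub}(\Om;\R^3)$ via \cref{def:divL2todualsp} and the analogous definitions for $\mathfrak{G}_w^{\nub}$ and $\mathfrak{C}_w^{\nub}$ (noting the sign flip for curl coming from the symmetry in \Cref{prop:hbibp}(3)), the identity is equivalent to the $L^2$ relation
\[
(\mathfrak{G}_w^{\nub}\ub,\mathfrak{G}_w^{\nub}\vb)_{L^2(\R^3;\R^{3\times 3})}
=(\mathfrak{D}_w^{-\nub}\ub,\mathfrak{D}_w^{-\nub}\vb)_{L^2(\R^3)}
+(\mathfrak{C}_w^{\nub}\ub,\mathfrak{C}_w^{\nub}\vb)_{L^2(\R^3;\R^3)}.
\]

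First I would verify this for $\ub,\vb\in C_c^\infty(\Om;\R^3)$. Starting from the pointwise identity in \Cref{lem:veccalid2}, I would dot with $\vb$ and integrate, then apply integration by parts once to each of the three resulting integrals using \Cref{prop:hbibp}: part (2) with direction $-\nub$ on the left-hand integral (turning $\int \mathcal{D}_w^{-\nub}(\mathcal{G}_w^{\nub}\ub)\cdot\vb$ into $-\int \mathcal{G}_w^{\nub}\ub:\mathcal{G}_w^{\nub}\vb$), part (1) on the scalar-gradient term (turning $\int \mathcal{G}_w^{\nub}(\mathcal{D}_w^{-\nub}\ub)\cdot\vb$ into $-\int \mathcal{D}_w^{-\nub}\ub\,\mathcal{D}_w^{-\nub}\vb$), and part (3) on the curl--curl term (turning $\int \mathcal{C}_w^{-\nub}(\mathcal{C}_w^{\nub}\ub)\cdot\vb$ into $\int \mathcal{C}_w^{\nub}\ub\cdot\mathcal{C}_w^{\nub}\vb$). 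After multiplying through by $-1$, the $L^2$ identity in its plus-sign form drops out.

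Next, with $\vb\in C_c^\infty(\Om;\R^3)$ fixed and $\ub\in\cS_w^{\nub}(\Om;\R^3)$ arbitrary, I would invoke \Cref{thm:density} to take $\ub^{(n)}\in C_c^\infty(\Om;\R^3)$ with $\ub^{(n)}\to\ub$ in $\cS_w^{\nub}(\Om;\R^3)$. The continuity statements in \Cref{prop:divisL2} and \Cref{prop:curlisL2} then give $\mathcal{G}_w^{\nub}\ub^{(n)}\to\mathfrak{G}_w^{\nub}\ub$, $\mathcal{D}_w^{-\nub}\ub^{(n)}\to\mathfrak{D}_w^{-\nub}\ub$, and $\mathcal{C}_w^{\nub}\ub^{(n)}\to\mathfrak{C}_w^{\nub}\ub$ in the appropriate $L^2$ spaces. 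Passing $n\to\infty$ in the smooth identity yields the $L^2$ relation for $\ub\in\cS_w^{\nub}$ and $\vb\in C_c^\infty$. A second density step, approximating $\vb\in\cS_w^{\nub}(\Om;\R^3)$ by $\vb^{(m)}\in C_c^\infty(\Om;\R^3)$ and applying the same $L^2$-continuity of the three distributional operators, lifts the identity to arbitrary $\vb\in\cS_w^{\nub}(\Om;\R^3)$, which is the desired conclusion.

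I do not anticipate a serious obstacle; all ingredients--\Cref{lem:veccalid2} for smooth functions, the three integration-by-parts formulas of \Cref{prop:hbibp}, the density theorem \Cref{thm:density}, and the $L^2$-continuity of $\mathfrak{D}_w^{\pm\nub}$ and $\mathfrak{C}_w^{\pm\nub}$ on $\cS_w^{\nub}$--are already in place. The only point that requires care is tracking the alternation of signs and the swap between $\nub$ and $-\nub$ across the three integration-by-parts formulas, which is precisely what makes the two right-hand side terms appear with the same (positive) sign in the $L^2$ form of the identity.
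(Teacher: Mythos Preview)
Your proposal is correct and takes essentially the same approach as the paper, which simply says the proof is ``similar to the proof of \Cref{lem:veccalid1_generalfct} by using \Cref{lem:veccalid2}.'' Your version is in fact more explicit: the paper's proof of \Cref{lem:veccalid1_generalfct} only spells out a single density step in $\vb$ (handling general $\ub$ ``by \Cref{lem:veccalid1} and the definitions''), whereas you carry out the two density steps in $\ub$ and $\vb$ separately, which makes the argument cleaner and avoids any ambiguity about testing the distributional identity against functions like $\mathcal{G}_w^{\nub}\vb$ that are smooth but not compactly supported.
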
    
\begin{proof}
Notice that the left hand side and the right hand side of \eqref{veccalid2_generalfct} are understood as 
 elements in $(\cS_w^{\nub}(\Om;\R^3))^\ast$. The proof is similar to the proof of \Cref{lem:veccalid1_generalfct} by using \Cref{lem:veccalid2}.
\end{proof}

\section{Nonlocal Poincar\'e inequality for integrable kernels with compact support}
\label{sec:Poincare_int_kernel}
In this section, we prove the Poincar\'e inequality for integrable kernels with compact support. Throughout this section, we assume that $w \in L^1(\R^d)$ and \begin{equation}
\label{kernel_compact}
   \supp\ w \subset K \text{ for some compact set } K\subset\R^d.
\end{equation}
We also assume for the rest of this paper that $\Om\subset\R^d$ is a bounded domain. 
Our major result in this section is the Poincar\'e inequality stated below.  
\begin{thm}[Poincar\'e inequality for integrable kernels with compact support]\label{thm:wintpoincare}
Let $\Om\subset\R^d$ be a bounded domain. 
Assume that $w\in L^1(\R^d)$ and satisfies \eqref{kernel_compact}, then the Poincar\'e inequality holds for $u\in \cS_w^{\nub}(\Omega)$. That is, there exists a constant $\Pi = 
\Pi(w,\Om,\nub)>0$ such that
\begin{equation}
     \|u\|_{L^2(\Omega)}\le \Pi \|\mathfrak{G}^{\nub}_w u\|_{L^2(\R^d;\R^d)},\quad \forall u\in \cS_w^{\nub}(\Omega),
\end{equation}
where $\mathfrak{G}^{\nub}_w u=\mathcal{G}_w^{\nub} u\in L^2(\R^d;\R^d)$.
\end{thm}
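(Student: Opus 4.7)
My plan is to argue by contradiction on the Fourier side, combining a sharp pointwise lower bound on the symbol $\bm\lambda_w^{\nub}$ with the compact support of $u$ in $\Omega$ to extract a convergent subsequence on the Fourier side. Since $w\in L^1(\R^d)$ has compact support and $u\in L^2(\R^d)$ vanishes outside the bounded domain $\Omega$ (hence $u\in L^1$), one checks that $w(\xb-\yb)|u(\xb)-u(\yb)|\in L^1(\R^d\times\R^d)$, so $\mathfrak{G}^{\nub}_w u=\mathcal{G}^{\nub}_w u$ by \Cref{cor:pvopeqdistop}. Approximating $u$ by $C^\infty_c(\Omega)$ via \Cref{thm:density} and invoking \Cref{lem:FT} together with Plancherel, one obtains
\[
\|\mathfrak{G}^{\nub}_w u\|_{L^2(\R^d;\R^d)}^2 \,=\, \int_{\R^d}|\bm\lambda_w^{\nub}(\xib)|^2|\hat u(\xib)|^2\,d\xib,
\]
so the problem reduces to controlling $\|\hat u\|_{L^2}^2$ from above by the weighted right-hand side.

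The first main step is to show that $|\bm\lambda_w^{\nub}|$ is uniformly bounded away from zero at infinity: there exist $R_0,c_0>0$ with $|\bm\lambda_w^{\nub}(\xib)|\ge c_0$ for $|\xib|\ge R_0$. Splitting
\[
\bm\lambda_w^{\nub}(\xib) \,=\, -\int_{\R^d}\chi_\nub(\zb)\tfrac{\zb}{|\zb|}w(\zb)\,d\zb \,+\, \int_{\R^d}\chi_\nub(\zb)\tfrac{\zb}{|\zb|}w(\zb)e^{2\pi i\xib\cdot\zb}\,d\zb,
\]
the integrand $\chi_\nub\tfrac{\zb}{|\zb|}w$ belongs to $L^1(\R^d)$, so Riemann--Lebesgue forces the second term to vanish as $|\xib|\to\infty$. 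The first term is, by the radial symmetry of $w$ and a spherical-coordinate computation, a negative scalar multiple of $\nub$; the positivity assumption in \cref{eq:kernelassumption} ensures this scalar is nonzero.

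Now suppose the inequality fails and choose $u_n\in\cS_w^{\nub}(\Omega)$ with $\|u_n\|_{L^2(\Omega)}=1$ and $\|\mathfrak{G}^{\nub}_w u_n\|_{L^2}\to 0$. The Fourier identity and the lower bound at infinity force $\int_{|\xib|\ge R_0}|\hat u_n|^2\,d\xib\to 0$, hence $\int_{|\xib|\le R_0}|\hat u_n|^2\,d\xib\to 1$ by Plancherel. Since each $u_n$ is supported in a fixed ball containing $\overline{\Omega}$, applying Cauchy--Schwarz to $\hat u_n(\xib)=\int u_n(\xb)e^{-2\pi i\xib\cdot\xb}\,d\xb$ and to its difference quotients shows $\{\hat u_n\}$ is uniformly bounded and equi-Lipschitz on $\R^d$. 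By Arzel\`a--Ascoli, a subsequence $\hat u_{n_k}$ converges uniformly on $\overline{B_{R_0}}$ to a continuous limit $f$ with $\int_{B_{R_0}}|f|^2\,d\xib=1$. Using continuity of $\bm\lambda_w^{\nub}$ on $\overline{B_{R_0}}$ (\Cref{prop:fouriersymsmooth}), one may pass to the limit in $\int_{B_{R_0}}|\bm\lambda_w^{\nub}|^2|\hat u_{n_k}|^2$ to obtain $\int_{B_{R_0}}|\bm\lambda_w^{\nub}|^2|f|^2=0$, and since $|\bm\lambda_w^{\nub}|>0$ off the origin (\Cref{prop:lambdatransandpositive}), $f=0$ a.e.\ on $B_{R_0}$, contradicting $\int|f|^2=1$. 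The main obstacle is precisely this compactness step: because $\bm\lambda_w^{\nub}$ vanishes at the origin, no global pointwise bound on $|\bm\lambda_w^{\nub}|$ can be used directly, and it is essential to convert bounded physical support of $u_n$ into equicontinuity of $\hat u_n$ on compact frequency windows so as to rule out Fourier mass concentrating at $\xib=0$.
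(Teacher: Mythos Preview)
Your proof is correct and takes a genuinely different route from the paper. The paper argues on the physical side: after extracting a weakly convergent subsequence in the Hilbert space $\cS_w^{\nub}(\Omega)$, it uses weak lower semicontinuity of the Dirichlet energy (\Cref{prop:lscofhbgrad}) and injectivity of $\mathcal{G}_w^{\nub}$ (\Cref{prop:hbgradkerisnull}) to show the weak limit is zero, then upgrades to strong $L^2$ convergence by writing $\mathcal{G}_w^{\nub}$ as convolution plus multiplication (\cref{Gconvplusmult}) and observing that the cross term $(u_n,Ku_n)_\Omega$ involves a compact integral operator on $L^2(\Omega)$. Your approach stays entirely on the Fourier side: Riemann--Lebesgue gives a uniform lower bound $|\bm\lambda_w^{\nub}|\ge c_0$ at high frequencies, while at low frequencies you convert the fixed compact support of $u_n$ into equicontinuity of $\hat u_n$ and run Arzel\`a--Ascoli. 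This is arguably more elementary and transparent; it bypasses the weak-lower-semicontinuity and compact-operator machinery and absorbs the injectivity step into the same limiting argument. The paper's physical-space decomposition, on the other hand, points toward templates that might survive in $L^p$ settings where Plancherel is unavailable.

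One small repair: your derivation of the Plancherel identity $\|\mathfrak{G}_w^{\nub} u\|_{L^2}^2=\int|\bm\lambda_w^{\nub}|^2|\hat u|^2$ via \Cref{thm:density} is not quite licensed, since \Cref{thm:density} assumes a continuous boundary while \Cref{thm:wintpoincare} allows an arbitrary bounded domain. You do not need density here at all: because $w\in L^1$, \Cref{prop:properties} gives $\mathcal{G}_w^{\nub} u = \check{\bm\beta}^{\nub}*u - \cb_{\nub}u$ for every $u\in L^2(\R^d)$, and taking the Fourier transform of this convolution-plus-multiplication representation yields $\mathcal{F}(\mathcal{G}_w^{\nub} u)=\bm\lambda_w^{\nub}\hat u$ directly.
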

In the following, we establish necessary ingredients for the proof of \Cref{thm:wintpoincare}. 
We first give a list of new notations that will be used in this section.
\begin{itemize}
    \item For the kernel $w$, let $\hat{c}$ be a constant depending only on $w$ defined as \begin{equation}\label{chat}
    \hat{c}:=\int_{\{z_1>0\}}\frac{z_1}{|\zb|}w(\zb)d\zb >0.
\end{equation}
    \item For a fixed unit vector $\nub\in\mathbb{R}^d$, define a vector-valued function ${\bm\beta}^{\nub}:\mathbb{R}^d\to\mathbb{R}^d$ by
    \begin{equation}\label{defbeta}
    {\bm\beta}^{\nub}(\zb):=\chi_{\nub}(\zb)\frac{\zb}{|\zb|}w(\zb).
\end{equation}
    \item Let $\cb_{\nub}$  be a constant vector depending only on $w$ and $\nub$ defined as
\begin{equation}\label{eq:veccn}
    \cb_{\nub}:=\int_{\mathbb{R}^d}{\bm\beta}^{\nub}(\zb)d\zb =\int_{\mathbb{R}^d}\chi_{\nub}(\zb)\frac{\zb}{|\zb|}w(\zb)d\zb .
\end{equation}
    \item Let $\bm F,\bm G:\mathbb{R}^d\to\mathbb{R}^d$ be vector-valued functions. Define their convolution as the following scalar-valued function
\[(\bm F*\bm G)(\xb):=\int_{\mathbb{R}^d} \bm F(\xb-\yb )\cdot \bm G(\yb)d\yb .\]
\end{itemize}

There are a few properties related to the above defined quantities. We list these properties here without proof since they are not hard to see. 

\begin{itemize}
    \item  For any $d\times d$ orthogonal matrix $R$, 
\begin{equation}\label{eq:cnrot}
    \cb_{R\nub}=R\cb_{\nub}.
\end{equation}
Consequently, 
\begin{equation}\label{eq:cnequalhatcn}
    \cb_{\nub}=\hat{c}\nub.
\end{equation}
    \item  From  Young's convolution inequality, for $1\le p\le \infty$ we have 
    \[\|\bm F*\bm G\|_{L^p(\mathbb{R}^d)}\le \|\bm F\|_{L^1(\mathbb{R}^d;\mathbb{R}^d)}\|\bm G\|_{L^p(\mathbb{R}^d;\mathbb{R}^d)}.\] 
\end{itemize}

With the integrability assumption of $w$, we notice that $\mathcal{G}_w^{\nub}$ is well-defined on $L^p(\R^d)$, $\mathcal{D}^{\nub}_w $ is well-defined on 
$L^p(\R^d; \R^d)$, and the limiting process in the definition of $\mathcal{G}_w^{\nub}$ and $\mathcal{D}^{\nub}_w$ can be dropped. In addition, each of $\mathcal{G}_w^{\nub}$ and $\mathcal{D}^{\nub}_w$ can be rewritten as a convolution operator plus a multiplication operator using the notations $\bm\beta^{\nub}$ and $\cb_{\nub}$ above. 
These lead to a stronger version of integration by parts formula and an equivalent characterization of $\cS_w^{\nub}(\Omega)$.

\begin{prop}\label{prop:properties}
The following statements are true. \\
 (1) For $1\le p\le \infty$, $\mathcal{G}_w^{\nub}: L^p(\R^d) \to L^p(\R^d;\R^d)$, $\mathcal{D}^{\nub}_w : L^p(\R^d;\R^d) \to L^p(\R^d) $, and $\mathcal{C}^{\nub}_w : L^p(\R^3;\R^3) \to L^p(\R^3;\R^3)$ are bounded operators with estimates 
\begin{align*}
&\| \mathcal{G}_w^{\nub} u \|_{L^p(\R^d;\R^d)}\le 2\|w\|_{L^1(\mathbb{R}^d)}\|u\|_{L^p(\mathbb{R}^d)}, \\
&\|\mathcal{D}^{\nub}_w  \vb\|_{L^p(\mathbb{R}^d)}\le C\|w\|_{L^1(\mathbb{R}^d)}\|\vb\|_{L^p(\mathbb{R}^d;\mathbb{R}^d)}, \\
&\|\mathcal{C}^{\nub}_w  \vb\|_{L^p(\mathbb{R}^3;\R^3)}\le C\|w\|_{L^1(\mathbb{R}^3)}\|\vb\|_{L^p(\mathbb{R}^3;\mathbb{R}^3)}.
\end{align*}
for some $C =C(d)>0$. Moreover, for $u\in L^p(\R^d)$ and $\vb\in L^p(\R^d;\R^d)$, 
\begin{equation}\label{Gconvplusmult}
    \mathcal{G}_w^{\nub} u(\xb)=\int_{\R^d} \bm\beta^{\nub}(\yb-\xb)u(\yb)d\yb -\cb_{\nub}u(\xb),\quad\text{a.e.}\ \xb\in\R^d,
\end{equation} 
\begin{equation}\label{Dconvplusmult}
    \mathcal{D}^{-\nub}_w  \vb(\xb)=-\bm\beta^{\nub}*\vb(\xb)+\cb_{\nub}\cdot\vb(\xb),\quad\text{a.e.}\ \xb\in\R^d.
\end{equation}
(2) Suppose $u\in L^p(\mathbb{R}^d)$ and $\vb\in L^{p'}(\mathbb{R}^d;\mathbb{R}^d)$, where $p'=\frac{p}{p-1}$ and $1\le p\le \infty$ ($p'=\infty$ for $p=1$ and $p'=1$ for $p=\infty$). Then 
\[
    \int_{\mathbb{R}^d} \mathcal{G}_w^{\nub} u(\xb)\cdot\vb(\xb)d\xb =-\int_{\mathbb{R}^d}u(\xb)\mathcal{D}^{-\nub}_w  \vb(\xb)d\xb .
\]
Similarly, for $\ub\in L^p(\R^3;\R^3)$ and $\vb\in L^{p'}(\R^3;\R^3)$, 
\[
    \int_{\R^3} \mathcal{C}^{\nub}_w \ub(\xb)\cdot\vb(\xb)d\xb=\int_{\R^3} \ub(\xb)\cdot\mathcal{C}^{-\nub}_w \vb(\xb)d\xb.
\]
\end{prop}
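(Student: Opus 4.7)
The strategy is to use the integrability hypothesis $w\in L^1(\R^d)$ to bypass the principal value and to split each operator into an absolutely convergent convolution piece plus a pointwise multiplication piece. Once the formulas \cref{Gconvplusmult} and \cref{Dconvplusmult} (and their curl analogue) are established, both the $L^p$ bounds and the integration-by-parts identities follow from Young's convolution inequality and Fubini's theorem in a routine way.

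First I would verify the representation formulas. Starting from \Cref{def:operators_PV}, since $w\in L^1$ the functions $\yb\mapsto w(\yb-\xb)u(\yb)$ and $\yb\mapsto w(\yb-\xb)u(\xb)$ are separately integrable for a.e.\ $\xb$, so I may drop the limit and split the difference $u(\yb)-u(\xb)$. After the substitution $\zb=\yb-\xb$ this gives
\beq
\mathcal{G}_w^{\nub}u(\xb)=\int_{\R^d}\bm\beta^{\nub}(\yb-\xb)\,u(\yb)\,d\yb-\cb_{\nub}u(\xb),
\eeq
which is \cref{Gconvplusmult}. For $\mathcal{D}_w^{-\nub}$ I use the key identity $\bm\beta^{-\nub}(\xb-\yb)=-\bm\beta^{\nub}(\yb-\xb)$, a direct consequence of the radiality of $w$ and $\chi_{-\nub}(\zb)=\chi_{\nub}(-\zb)$; this turns the integrand into $-\bm\beta^{\nub}(\xb-\yb)\cdot(\vb(\yb)-\vb(\xb))$ and produces \cref{Dconvplusmult}. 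The same trick yields the analogous formula
\beq
\mathcal{C}_w^{\nub}\vb(\xb)=\int_{\R^d}\bm\beta^{\nub}(\yb-\xb)\times\vb(\yb)\,d\yb-\cb_{\nub}\times\vb(\xb)
\eeq
for the curl.

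The $L^p$ bounds in (1) then drop out: since $|\bm\beta^{\nub}|\le w$, we have $\bm\beta^{\nub}\in L^1(\R^d;\R^d)$ with $\|\bm\beta^{\nub}\|_{L^1}\le\|w\|_{L^1}$, and similarly $|\cb_{\nub}|\le\|w\|_{L^1}$. The pointwise estimates $|\bm\beta^{\nub}(\yb-\xb)\cdot\vb(\yb)|\le w(\yb-\xb)|\vb(\yb)|$ and $|\bm\beta^{\nub}(\yb-\xb)\times\vb(\yb)|\le w(\yb-\xb)|\vb(\yb)|$ combined with Young's convolution inequality give all three bounds with constant $2\|w\|_{L^1}$.

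For part (2), I plug \cref{Gconvplusmult} into $\int\mathcal{G}_w^{\nub}u\cdot\vb\,d\xb$ and apply Fubini's theorem. The double integral is absolutely convergent because, by Young and H\"older,
\beq
\int_{\R^d}\int_{\R^d}|\bm\beta^{\nub}(\yb-\xb)|\,|u(\yb)|\,|\vb(\xb)|\,d\xb\,d\yb\le \|w\|_{L^1}\|u\|_{L^p}\|\vb\|_{L^{p'}}<\infty.
\eeq
Swapping the order of integration and regrouping the convolution and pointwise terms using $-(\bm\beta^{\nub}*\vb)(\yb)+\cb_{\nub}\cdot\vb(\yb)=\mathcal{D}_w^{-\nub}\vb(\yb)$ gives the desired identity. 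For the curl identity the main subtlety is to exploit the cyclic property $(\ab\times\bb)\cdot\cb=\ab\cdot(\bb\times\cb)$ together with the antisymmetry $-\bm\beta^{\nub}(\yb-\xb)=\bm\beta^{-\nub}(\xb-\yb)$ and $\cb_{-\nub}=-\cb_{\nub}$; this rearrangement, which is the only genuinely delicate step in the whole argument, converts the integrand into $\ub(\yb)\cdot(\bm\beta^{-\nub}(\xb-\yb)\times\vb(\xb))$ so that after Fubini the inner integral reassembles into $\mathcal{C}_w^{-\nub}\vb(\yb)$, yielding the stated formula.
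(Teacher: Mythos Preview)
Your proposal is correct and follows essentially the same route as the paper: drop the principal value by integrability of $w$, derive the convolution-plus-multiplication representations \cref{Gconvplusmult}--\cref{Dconvplusmult}, read off the $L^p$ bounds from Young's inequality, and obtain the integration-by-parts formulas by Fubini after justifying absolute integrability via $|u|(\yb)|\vb|(\xb)w(\yb-\xb)\in L^1(\R^{2d})$. The only cosmetic difference is that the paper proves the $L^p$ bound for $\mathcal{G}_w^{\nub}$ directly from the defining integral via H\"older rather than from the split representation, and it carries out the Fubini argument for (2) on the unsplit double integral; your use of the representations throughout is equally valid and in fact yields the explicit constant $2$ for all three operators.
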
 
\begin{proof}
Notice that since $w\in L^1(\R^d)$, the integrand in \cref{hbgrad} is Lebesgue integrable on $\R^d$ for $u\in L^p(\R^d)$, and therefore the limiting process can be dropped. The characterizations \eqref{Gconvplusmult} and \eqref{Dconvplusmult} follow directly from \Cref{def:operators_PV}, \eqref{defbeta} and \eqref{eq:veccn}. For instance, \eqref{Dconvplusmult} holds as \[
\begin{split}
\mathcal{D}^{-\nub}_w  \vb (\xb) &= \int_{\R^d} \frac{\yb-\xb}{|\yb-\xb|}\cdot \chi_{\nub}(\xb-\yb)(\vb(\yb) - \vb(\xb))w(\yb-\xb) d\yb \\
& = -\int_{\R^d}\chi_{\nub}(\xb-\yb)\frac{\xb-\yb}{|\xb-\yb|} w(\yb-\xb) \cdot\vb(\yb)d\yb + \int_{\R^d} \chi_{\nub}(\zb) \frac{\zb\cdot \vb(\xb)}{|\zb|} w(\zb) d\zb  \\
& = -{\bm\beta}^{\nub}\ast \vb(\xb) + \cb_{\nub}\cdot \vb(\xb) \quad\text{in}\ L^p(\R^d),
\end{split}
\]where the convolution term is well-defined thanks to the Young's convolution inequality and the fact that $w\in L^1(\R^d)$.
Suppose $1<p<\infty$. For $p=1$ and $p=\infty$ we can show the estimate similarly. Using Holder's inequality, we obtain 
\begin{align*}
   & \quad\, \int_{\mathbb{R}^d}|\mathcal{G}_w^{\nub} u(\xb)|^pd\xb =\int_{\mathbb{R}^d}\left|\int_{\mathbb{R}^d} (u(\yb)-u(\xb))\frac{\yb-\xb}{|\yb-\xb|}w(\yb-\xb)d\yb \right|^pd\xb \\
    &\le \int_{\mathbb{R}^d}\left(\int_{\mathbb{R}^d} |u(\yb)-u(\xb)|w(\yb-\xb)d\yb \right)^pd\xb \\
    &\le \int_{\mathbb{R}^d} \left(\int_{\mathbb{R}^d} w(\yb-\xb)d\yb \right)^{\frac{p}{p'}}\left(\int_{\mathbb{R}^d} w(\yb-\xb)|u(\yb)-u(\xb)|^{p}d\yb \right) d\xb \\
    &\le 2^{p-1}\|w\|_{L^1(\R^d)}^{\frac{p}{p'}} \int_{\mathbb{R}^d}\int_{\mathbb{R}^d} w(\yb-\xb)(|u|^p(\yb)+|u|^p(\xb))d\yb d\xb \\
    &=2^{p-1}\|w\|_{L^1(\R^d)}^{\frac{p}{p'}} \left(\int_{\mathbb{R}^d} |u|^p(\yb)d\yb \int_{\mathbb{R}^d} w(\yb-\xb)d\xb +\int_{\mathbb{R}^d} |u|^p(\xb)d\xb \int_{\mathbb{R}^d} w(\yb-\xb)d\yb \right)\\
    &\le 2^p\|w\|_{L^1(\R^d)}^p\|u\|_{L^p(\mathbb{R}^d)}^p,
\end{align*}
where $p'= p/(p-1)$. 
This shows (1). 
The estimates for $\mathcal{D}^{\nub}_w  \vb$ and 
$\mathcal{C}^{\nub}_w  \vb$ can be shown similarly. 

The integration by parts formulas in (2) can be shown by a change of integration order via Fubini's theorem, for example,
\begin{align*}
    \int_{\mathbb{R}^d} \mathcal{G}_w^{\nub} u(\xb)\cdot\vb(\xb)d\xb &=\int_{\mathbb{R}^d} \int_{\mathbb{R}^d} \chi_{\nub}(\yb-\xb)(u(\yb)-u(\xb))\frac{\yb-\xb}{|\yb-\xb|}w(\yb-\xb)\cdot\vb(\xb)d\yb d\xb \\
    &=\int_{\mathbb{R}^d} \int_{\mathbb{R}^d} -\chi_{\nub}(\xb-\yb )u(\xb)\frac{\yb-\xb}{|\yb-\xb|}w(\yb-\xb)\cdot\vb(\yb)d\yb d\xb \\&\quad\quad-\int_{\mathbb{R}^d} \int_{\mathbb{R}^d} \chi_{\nub}(\yb-\xb)u(\xb)\frac{\yb-\xb}{|\yb-\xb|}w(\yb-\xb)\cdot\vb(\xb)d\yb d\xb \\
    &=-\int_{\mathbb{R}^d}u(\xb)\mathcal{D}^{-\nub}_w  \vb(\xb)d\xb .
\end{align*}
Here Fubini's theorem is justified since $|u|(\yb)|\vb|(\xb)w(\yb-\xb)\in L^1(\mathbb{R}^d\times\mathbb{R}^d)$ for $u\in L^p(\R^d)$ and $\vb\in L^{p'}(\R^d;\R^d)$. Indeed, 
\begin{align*}
    \int_{\mathbb{R}^d} \int_{\mathbb{R}^d} |u|(\yb)|\vb|(\xb)w(\yb-\xb)d\yb d\xb &=\int_{\mathbb{R}^d} |u|(\yb)(w*|\vb|)(\yb)d\yb \\
    &\le \|u\|_{L^p(\mathbb{R}^d)}\|w*|\vb|\|_{L^{p'}(\mathbb{R}^d)}\\
    &\le C\|u\|_{L^p(\mathbb{R}^d)}\|w\|_{L^1(\mathbb{R}^d)}\|\vb\|_{L^{p'}(\mathbb{R}^d;\mathbb{R}^d)}<\infty,
\end{align*}
where we used Young's convolution inequality and $C$ is a constant only depending on the dimension $d$.
The second integration by parts formula in (2) can be shown similarly. 
\end{proof}
An immediate result from \Cref{prop:properties} is the following equivalent characterization
of $\cS_w^{\nub}(\Omega)$.
\begin{coro}[An equivalent characterization of $\cS_w^{\nub}(\Omega)$]\label{hbequichar}
With the integrability assumption of $w$, $\mathcal{G}^{\nub}_w u=\mathfrak{G}^{\nub}_w u$ for $u\in L^2(\R^d)$ with $u=0$ a.e. in $\Om^c$, and the function space $\cS_w^{\nub}(\Omega)$ defined by \cref{deffctsp_2} satisfies \[\cS_w^{\nub}(\Omega)=\{u\in L^2(\mathbb{R}^d): u=0 \text{ a.e. in }\Omega^c\}.\]
\end{coro}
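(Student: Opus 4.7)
The plan is to establish the pointwise identification of $\mathcal{G}^{\nub}_w u$ with $\mathfrak{G}^{\nub}_w u$ first, and then read off the set equality directly from the definition of $\cS_w^{\nub}(\Omega)$ in \cref{deffctsp_2}.

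For the first assertion, I would not need the hypothesis $u=0$ on $\Om^c$ at all; it holds for any $u\in L^2(\R^d)$. The key point is that under the integrability assumption $w\in L^1(\R^d)$, \Cref{prop:properties}(1) gives $\mathcal{G}^{\nub}_w u\in L^2(\R^d;\R^d)$ with no limit required, and \Cref{prop:properties}(2) promotes the integration-by-parts formula to the $L^2$ setting. Thus for every test function $\bm\phi\in C_c^\infty(\R^d;\R^d)$, I can write
\[
\int_{\R^d}\mathcal{G}^{\nub}_w u(\xb)\cdot\bm\phi(\xb)\,d\xb=-\int_{\R^d}u(\xb)\,\mathcal{D}_w^{-\nub}\bm\phi(\xb)\,d\xb=\langle \mathfrak{G}^{\nub}_w u,\bm\phi\rangle,
\]
where the first equality uses \Cref{prop:properties}(2) with $p=p'=2$ and the second uses the definition of the distributional gradient. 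Since this identity holds against every $\bm\phi\in C_c^\infty(\R^d;\R^d)$, the distribution $\mathfrak{G}^{\nub}_w u$ is represented by the $L^2$ function $\mathcal{G}^{\nub}_w u$, giving the claimed equality in $L^2(\R^d;\R^d)$.

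For the set equality, the inclusion $\cS_w^{\nub}(\Omega)\subseteq\{u\in L^2(\R^d):u=0\text{ a.e. in }\Omega^c\}$ is immediate from the definition \cref{deffctsp_2}. For the reverse inclusion, take any $u\in L^2(\R^d)$ with $u=0$ a.e.\ on $\Om^c$. By the previous paragraph, $\mathfrak{G}^{\nub}_w u=\mathcal{G}^{\nub}_w u$, and by \Cref{prop:properties}(1) this belongs to $L^2(\R^d;\R^d)$. Hence $u$ satisfies the two defining conditions of $\cS_w^{\nub}(\Omega)$.

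There is no serious obstacle here; the only subtlety is noticing that the distributional identification requires just the $L^2$--$L^2$ pairing made available by the integrability of $w$, and does not need density of smooth functions or any limiting procedure. The statement is essentially a bookkeeping consequence of \Cref{prop:properties} combined with the definition of $\mathfrak{G}^{\nub}_w$.
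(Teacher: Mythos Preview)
Your proof is correct and follows exactly the approach the paper intends: it states the corollary as ``an immediate result from \Cref{prop:properties}'' without spelling out details, and you have simply made explicit the two-line deduction from parts (1) and (2) of that proposition. Your remark that the vanishing condition on $\Om^c$ is unnecessary for the identification $\mathcal{G}^{\nub}_w u=\mathfrak{G}^{\nub}_w u$ is also correct and a minor sharpening of the statement.
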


We next show a crucial result for proving the Poincar\'e inequality. It claims that the operator $\mathcal{G}_w^{\nub}$ restricted to $\cS_w^{\nub}(\Omega)$ is injective.
\begin{prop}\label{prop:hbgradkerisnull}
Assume that $w\in L^1(\R^d)$ and satisfies \eqref{kernel_compact}.
If $u\in \cS_w^{\nub}(\Omega)$ satisfies $\mathcal{G}_w^{\nub} u(\xb)=0$ for a.e. $\bm x\in \mathbb{R}^d$, then $u\equiv 0$.
\end{prop}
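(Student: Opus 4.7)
The plan is to pass to the Fourier side and exploit the nonvanishing of the symbol $\bm\lambda_w^{\nub}$ established in \Cref{prop:lambdatransandpositive}. Since $u\in \cS_w^{\nub}(\Omega)\subset L^2(\R^d)$, the Fourier transform $\hat u$ is a well-defined element of $L^2(\R^d;\C)$ (viewing $u$ as a complex-valued function in line with the remark preceding this subsection), and the assumption $\mathcal G_w^{\nub}u = \bm 0$ implies $\mathcal F(\mathcal G_w^{\nub}u)=\bm 0$ in $L^2$. If I can show
\begin{equation}\label{eq:plan-FT-identity}
\mathcal F(\mathcal G_w^{\nub}u)(\bm\xi) \;=\; \bm\lambda_w^{\nub}(\bm\xi)\,\hat u(\bm\xi)\qquad\text{for a.e. } \bm\xi\in\R^d,
\end{equation}
then the problem is reduced to a componentwise pointwise cancellation. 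By \Cref{prop:lambdatransandpositive}, $|\bm\lambda_w^{\nub}(\bm\xi)|>0$ for every $\bm\xi\ne\bm 0$, so at least one of the $d$ scalar entries of $\bm\lambda_w^{\nub}(\bm\xi)\in\C^d$ is nonzero, forcing $\hat u(\bm\xi)=0$ for a.e. $\bm\xi\in\R^d$, whence $u=0$ in $L^2(\R^d)$ by Plancherel.

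The only point requiring care is the justification of \eqref{eq:plan-FT-identity} at the regularity $u\in L^2$, since \Cref{lem:FT} is stated for $H^1$ functions. Here I would use the integrability assumption on $w$. By \Cref{prop:properties}(1), $\mathcal G_w^{\nub}:L^2(\R^d)\to L^2(\R^d;\R^d)$ is bounded, and from \eqref{deffouriersymboldiv} together with the trivial bound $|e^{2\pi i\bm\xi\cdot\zb}-1|\le 2$ one gets $|\bm\lambda_w^{\nub}(\bm\xi)|\le 2\|w\|_{L^1(\R^d)}$ uniformly in $\bm\xi$, so the multiplication operator $\hat u\mapsto \bm\lambda_w^{\nub}\hat u$ is bounded on $L^2(\R^d;\C^d)$ as well. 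Both sides of \eqref{eq:plan-FT-identity}, viewed as operators $L^2(\R^d)\to L^2(\R^d;\C^d)$, are therefore bounded, and they agree on the dense subspace $C_c^\infty(\R^d)\subset H^1(\R^d)$ by \Cref{lem:FT}. A standard density/continuity argument promotes \eqref{eq:plan-FT-identity} to all of $L^2(\R^d)$.

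With \eqref{eq:plan-FT-identity} in hand, $\bm\lambda_w^{\nub}(\bm\xi)\hat u(\bm\xi)=\bm 0$ for a.e. $\bm\xi$; combining this with \eqref{lambpost} gives $\hat u = 0$ a.e., hence $u\equiv 0$. I expect the main obstacle to be simply the careful verification of \eqref{eq:plan-FT-identity} outside the $H^1$ regime, together with the need to interpret the identity over $\C^d$ (this is exactly where the complex-valued extension mentioned earlier is needed, since $\hat u$ is generically complex even when $u$ is real); note that the argument does not actually invoke either the boundedness of $\Om$ or the condition $u=0$ on $\Om^c$, which reflects the fact that the injectivity of $\mathcal G_w^{\nub}$ is really a property of the symbol rather than of the function space.
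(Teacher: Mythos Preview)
Your proof is correct and is in fact more streamlined than the paper's. Both arguments work on the Fourier side and ultimately rely on the positivity $|\bm\lambda_w^{\nub}(\bm\xi)|>0$ for $\bm\xi\neq\bm 0$ from \Cref{prop:lambdatransandpositive}, but they reach the conclusion by different routes.

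The paper does not extend the symbol identity \eqref{eq:plan-FT-identity} to $L^2$ directly. Instead, it uses the integration-by-parts formula (the complex-valued version of \Cref{prop:properties}(2)) to write $0=(\mathcal G_w^{\nub}u,\bm\varphi)_{L^2}=-(\hat u,(\bm\lambda_w^{-\nub})^T\hat{\bm\varphi})_{L^2}$ for Schwartz test functions $\bm\varphi$. Then, invoking the smoothness of $\bm\lambda_w^{\nub}$ (which needs the compact-support hypothesis via \Cref{prop:fouriersymsmooth}), it constructs for each $\phi\in C_c^\infty(\R^d\setminus\{\bm 0\};\C)$ a test function $\bm\varphi$ with $(\bm\lambda_w^{-\nub})^T\hat{\bm\varphi}=\phi$. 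This shows $\hat u$ vanishes as a tempered distribution away from the origin, hence $u$ is a polynomial, and finally the condition $u=0$ on $\Om^c$ forces $u\equiv 0$.

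Your approach bypasses this distributional detour by observing that, under the integrability assumption $w\in L^1$, both $u\mapsto\mathcal F(\mathcal G_w^{\nub}u)$ and $u\mapsto\bm\lambda_w^{\nub}\hat u$ are bounded on $L^2$, so the density extension of \eqref{eq:plan-FT-identity} from $C_c^\infty$ is immediate. This is both shorter and stronger: as you correctly note, it uses neither the compact support of $w$ (only $w\in L^1$), nor the vanishing of $u$ outside $\Om$, nor the boundedness of $\Om$. The paper's route, by contrast, genuinely uses compact support to get $\bm\lambda_w^{\nub}\in C^\infty$ and genuinely uses the exterior vanishing of $u$ to kill the polynomial. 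One could argue the paper's argument is more robust in settings where the symbol is unbounded (so your density extension would fail), but under the stated hypothesis $w\in L^1$ your argument is the more economical one.
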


\begin{proof}
Note that the nonlocal integration by parts formula in \Cref{prop:properties} also holds for complex-valued functions. That is, for $u\in L^p(\R^d;\C)$ and $\vb\in L^{p'}(\R^d;\C^d)$ with $p$ and $p'$ given by \Cref{prop:properties},
\begin{equation}\label{eq:cplxhbibp}
    (\mathcal{G}^{\nub}_w u,\vb)_{L^2(\R^d;\C^d)}=-(u,\mathcal{D}^{-\nub}_w \vb)_{L^2(\R^d;\C)},
\end{equation}where the $L^2$-norm is given by \[(\bm F,\bm G)_{L^2(\R^d;\C^n)}=\int_{\R^d}\bm F(\xb)^T\overline{\bm G(\xb)}d\xb,\quad\forall \bm F, \bm G\in L^2(\R^d;\C^n),\ n=1,d.\] Thus, for any $\bm\varphi$ in the Schwartz space $\mathscr{S}(\mathbb{R}^d;\mathbb{C}^d)\subset L^2(\mathbb{R}^d;\mathbb{C}^d)$, we have
\begin{equation}\label{eq:hatunull}
\begin{aligned}
    0&=(\mathcal{G}_w^{\nub} u,\bm\varphi)_{L^2(\R^d;\C^d)}=-(u, \mathcal{D}^{-\nub}_w  \bm\varphi)_{L^2(\R^d;\C)}\\
    &=-( \mathcal{F}u,\mathcal{F}(\mathcal{D}^{-\nub}_w  \bm\varphi))_{L^2(\R^d;\C)}=-(\hat{u},({\bm\lambda}_w^{-\nub})^T\hat{\bm\varphi})_{L^2(\R^d;\C)},
\end{aligned}
\end{equation}
where $\hat{u}=\mathcal{F}u\in L^2(\mathbb{R}^d;\mathbb{C})$ since $u\in L^2(\mathbb{R}^d)$ and $\mathcal{D}^{-\nub}_w  \bm\varphi\in L^2(\mathbb{R}^d;\C)$ by \cref{eq:cplxhbibp}. 

Since $L^2(\mathbb{R}^d;\mathbb{C})\subset \mathscr{S}'(\mathbb{R}^d;\mathbb{C})$, we view $\hat{u}=\mathcal{F}u\in \mathscr{S}'(\mathbb{R}^d;\mathbb{C})$ as a tempered distribution. Now we prove the following claim:
\begin{equation}
    \langle \hat{u},\phi\rangle=0,\quad\forall \phi\in C_c^\infty(\mathbb{R}^d\backslash \{\bm 0\};\mathbb{C}).
\end{equation}
Let $\bm\varphi:\mathbb{R}^d\to\mathbb{C}^d$ be defined as \[\bm\varphi:=\mathcal{F}^{-1}\left( \frac{\overline{{\bm\lambda}_w^{-\nub}}}{|{\bm\lambda}_w^{-\nub}|^2}\phi\right).\]
Since $|{\bm\lambda}_w^{-\nub}(\xib)|>0$ for $\xib\neq \bm 0$ by \Cref{prop:lambdatransandpositive} and $\phi(\xib)=0$ in a neighborhood of $\xib=\bm 0$, $\overline{{\bm\lambda}_w^{-\nub}(\xib)}\phi(\xib)/|{\bm\lambda}_w^{-\nub}(\xib)|^2$ is a well-defined vector-valued function on $\mathbb{R}^d$. Moreover, $\overline{{\bm\lambda}_w^{-\nub}(\xib)}\phi(\xib)/|{\bm\lambda}_w^{-\nub}(\xib)|^2\in C_c^\infty(\mathbb{R}^d\backslash \{\bm 0\},\mathbb{C}^d)\subset \mathscr{S}(\mathbb{R}^d;\mathbb{C}^d)$  since ${\bm\lambda}_w^{-\nub}\in C^\infty(\mathbb{R}^d;\mathbb{C}^d)$ by \Cref{prop:fouriersymsmooth} and $\phi\in C_c^\infty(\mathbb{R}^d\backslash \{\bm 0\},\mathbb{C})$. Hence $\bm\varphi\in \mathscr{S}(\mathbb{R}^d;\mathbb{C}^d)$ since $\mathcal{F}$ is an isomorphism on $\mathscr{S}(\mathbb{R}^d;\mathbb{C}^d)$.  Observing that ${\bm\lambda}_w^{-\nub}(\xib)^T\hat{\bm\varphi}(\xib)=\phi(\xib)$, the claim follows from \cref{eq:hatunull}.

Now from the claim, we have $\supp \ \hat{u}\subset \{\bm 0\}$. Then by Corollary 2.4.2 in \cite{grafakos2014classical}, $u$ is a polynomial, i.e., \[u(\xb)=\sum_{|\alpha|\le k} a_\alpha x^\alpha\] for some nonnegative integer $k$ and real numbers $a_\alpha$ for $|\al|\le k$. 
Since $u=0$ in $\Omega^c$, it follows $u\equiv 0$.
\end{proof}

The last ingredient of the proof of the Poincar\'e inequality is the weak lower semicontinuity of the Dirichlet integral $\int_{\mathbb{R}^d}|\mathcal{G}_w^{\nub} u(\xb)|^2d\xb$.
To establish this result, we need Proposition A.3 in \cite{BALL1984225} which is stated as a lemma below.

\begin{lem}[Proposition A.3 in \cite{BALL1984225}]
\label{lem:propa3}
Let $\Omega\subset \mathbb{R}^m$ be bounded open, and let $H:\mathbb{R}^s\to \R\cup\{\pm \infty\}$ be convex, lower semicontinuous and bounded below. Let $\theta_j,\, \theta\in L^1(\Omega; \mathbb{R}^s)$ with $\theta_j\stackrel{\ast}{\rightharpoonup}\theta$ (i.e., $\int_\Omega \theta_j\phi d\xb \to \int_\Omega \theta\phi d\xb $ for all $\phi\in C_c(\Omega)$). Then 
\[\int_\Omega H(\theta(\xb))d\xb \le \liminf_{j\to \infty} \int_\Omega H(\theta_j(\xb))d\xb .\]
\end{lem}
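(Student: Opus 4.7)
The plan is to prove this classical weak-$\ast$ lower semicontinuity via mollification combined with Jensen's inequality, then pass a double limit. First I would reduce to the case $H\ge 0$ by subtracting the lower bound of $H$: since $\Omega$ is bounded this only adds a finite constant to both sides of the desired inequality and preserves convexity, lower semicontinuity, and the $L^1$ integrability of $\theta_j,\theta$.

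The core step uses a standard mollifier $\eta_\epsilon\in C_c^\infty(B_\epsilon(\bm 0))$ with $\eta_\epsilon\ge 0$ and $\int\eta_\epsilon=1$. Define $\theta_{j,\epsilon}:=\eta_\epsilon\ast\theta_j$ and $\theta_\epsilon:=\eta_\epsilon\ast\theta$ on $\Omega_\epsilon:=\{\xb\in\Omega:\mathrm{dist}(\xb,\partial\Omega)>\epsilon\}$. For each fixed $\xb\in\Omega_\epsilon$, the map $\yb\mapsto\eta_\epsilon(\xb-\yb)$ lies in $C_c(\Omega)$, so the weak-$\ast$ hypothesis gives the pointwise convergence $\theta_{j,\epsilon}(\xb)\to\theta_\epsilon(\xb)$ as $j\to\infty$. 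Jensen's inequality applied to the convex $H$ and the probability measure with density $\eta_\epsilon(\xb-\cdot)$ yields
\[
 H(\theta_{j,\epsilon}(\xb))\le \int_\Omega \eta_\epsilon(\xb-\yb)\, H(\theta_j(\yb))\,d\yb,
\]
and integrating in $\xb$ with Fubini (using $\int_{\Omega_\epsilon}\eta_\epsilon(\xb-\yb)\,d\xb\le 1$) produces the pre-limit estimate $\int_{\Omega_\epsilon}H(\theta_{j,\epsilon})\,d\xb\le\int_\Omega H(\theta_j)\,d\yb$. Passing $j\to\infty$ with lower semicontinuity of $H$ and Fatou's lemma (valid since $H\ge 0$) yields $\int_{\Omega_\epsilon}H(\theta_\epsilon)\,d\xb\le\liminf_j\int_\Omega H(\theta_j)\,d\yb$. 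Finally I let $\epsilon\to 0$: for any compact $K\subset\Omega$ one has $K\subset\Omega_\epsilon$ for small $\epsilon$, with $\theta_\epsilon\to\theta$ in $L^1(K)$ and hence a.e.\ on $K$ along a subsequence; another application of lsc of $H$ and Fatou, followed by monotone convergence as $K\nearrow\Omega$, delivers $\int_\Omega H(\theta)\,d\xb\le\liminf_j\int_\Omega H(\theta_j)\,d\xb$.

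The main obstacle will be the limit $j\to\infty$ inside the Jensen estimate: one has only pointwise—not $L^1$—convergence of $\theta_{j,\epsilon}$, so the limit cannot be passed inside the integral directly. It is precisely here that the lower semicontinuity of $H$ (which gives $H(\theta_\epsilon(\xb))\le\liminf_j H(\theta_{j,\epsilon}(\xb))$ pointwise) and the nonnegativity reduction (allowing Fatou's lemma) are both essential; convexity alone would be insufficient. A secondary technical point is the passage from $\Omega_\epsilon$ back to $\Omega$, which is handled by exhausting $\Omega$ with compact subsets and invoking monotone convergence under $H\ge 0$.
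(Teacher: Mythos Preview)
The paper does not supply its own proof of this lemma; it is quoted verbatim as Proposition~A.3 from \cite{BALL1984225} and used as a black box in the proof of Proposition~\ref{prop:lscofhbgrad}. So there is no in-paper argument to compare against.

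Your mollification-plus-Jensen strategy is a correct and standard route to this result. The reduction to $H\ge 0$ is legitimate because $|\Omega|<\infty$; the pointwise convergence $\theta_{j,\epsilon}(\xb)\to\theta_\epsilon(\xb)$ for each fixed $\xb\in\Omega_\epsilon$ is exactly what the weak-$\ast$ hypothesis buys you since $\eta_\epsilon(\xb-\cdot)\in C_c(\Omega)$; Jensen's inequality applies to the proper convex lsc function $H$ against the probability measure $\eta_\epsilon(\xb-\cdot)\,d\yb$; and the two Fatou passages (first in $j$, then in $\epsilon$ along a subsequence on compacta) are justified by the nonnegativity reduction. The final monotone-convergence step as $K\nearrow\Omega$ is clean. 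One minor remark: in the $\epsilon\to 0$ step you should make clear that the inequality $\int_K H(\theta_\epsilon)\le L$ holds for \emph{all} small $\epsilon$, so it persists along the a.e.-convergent subsequence you extract, and hence the Fatou bound on $\int_K H(\theta)$ follows without circularity. With that clarification the argument is complete.
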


\begin{prop}\label{prop:lscofhbgrad}
Suppose that $\{u_n\}$ converges weakly to $u$ in $\cS_w^{\nub}(\Omega)$. Then
\begin{equation}\label{lscofhbgrad}
    \int_{\mathbb{R}^d}|\mathcal{G}_w^{\nub} u(\xb)|^2d\xb \le \liminf_{n\to\infty} \int_{\mathbb{R}^d}|\mathcal{G}_w^{\nub} u_n(\xb)|^2d\xb .
\end{equation}
\end{prop}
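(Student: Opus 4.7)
The idea is that, under the current integrability and compact-support assumptions on $w$, the Dirichlet integral in \eqref{lscofhbgrad} is a genuine $L^2$ norm squared, so the conclusion reduces to weak lower semicontinuity of the norm in the Hilbert space $L^2(\R^d;\R^d)$. I will set this up in two short steps and then invoke \Cref{lem:propa3} to conclude.

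First, by \Cref{hbequichar}, under the standing assumption that $w\in L^1(\R^d)$ with compact support, $\mathfrak{G}^{\nub}_w v=\mathcal{G}_w^{\nub} v\in L^2(\R^d;\R^d)$ for every $v\in \cS_w^{\nub}(\Omega)$, so the distributional gradient appearing in the norm of $\cS_w^{\nub}(\Omega)$ coincides with the principal-value gradient. Inspection of the formula \eqref{hbgrad} shows further that, because $\Omega$ is bounded and $\supp w$ is compact, $\supp\mathcal{G}_w^{\nub} v$ lies inside a fixed bounded open set $\Omega'\subset\R^d$ for every $v\in \cS_w^{\nub}(\Omega)$. Consequently the integrals in \eqref{lscofhbgrad} reduce to integrals over the bounded domain $\Omega'$.

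Next, I claim that $u_n\rightharpoonup u$ in $\cS_w^{\nub}(\Omega)$ forces $\mathcal{G}_w^{\nub} u_n\rightharpoonup \mathcal{G}_w^{\nub} u$ weakly in $L^2(\R^d;\R^d)$. By the definition of the norm on $\cS_w^{\nub}(\Omega)$ from \Cref{thm:fctspcmplt}, the map $\mathfrak{G}^{\nub}_w:\cS_w^{\nub}(\Omega)\to L^2(\R^d;\R^d)$ is a bounded linear operator, and any bounded linear operator between Hilbert spaces is weakly continuous via an adjoint argument: for any $\bm\psi\in L^2(\R^d;\R^d)$,
\[
(\mathcal{G}_w^{\nub} u_n,\bm\psi)_{L^2}=\langle u_n,(\mathfrak{G}^{\nub}_w)^{\ast}\bm\psi\rangle_{\cS_w^{\nub}(\Omega)}\longrightarrow\langle u,(\mathfrak{G}^{\nub}_w)^{\ast}\bm\psi\rangle_{\cS_w^{\nub}(\Omega)}=(\mathcal{G}_w^{\nub} u,\bm\psi)_{L^2}.
\]
Weak $L^2(\Omega';\R^d)$ convergence in particular implies weak-$*$ convergence in $L^1(\Omega';\R^d)$ on the bounded set $\Omega'$, since $C_c(\Omega';\R^d)\subset L^2(\Omega';\R^d)$. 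Hence \Cref{lem:propa3} applies with $\theta_n=\mathcal{G}_w^{\nub} u_n$, $\theta=\mathcal{G}_w^{\nub} u$ and $H(\theta)=|\theta|^2$ (which is convex, continuous, and nonnegative) on $\Omega'$, yielding
\[
\int_{\Omega'}|\mathcal{G}_w^{\nub} u|^2\, d\xb\le\liminf_{n\to\infty}\int_{\Omega'}|\mathcal{G}_w^{\nub} u_n|^2\, d\xb,
\]
which combined with the support considerations in the first step is exactly \eqref{lscofhbgrad}.

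There is no real technical obstacle in this argument: the preparatory work already done in \Cref{thm:fctspcmplt} and \Cref{hbequichar} makes the conclusion essentially a one-line consequence of the weak continuity of bounded linear operators between Hilbert spaces followed by weak lower semicontinuity of the $L^2$ norm. One could equally well skip \Cref{lem:propa3} altogether and invoke this latter fact directly.
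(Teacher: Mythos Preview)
Your proof is correct and follows essentially the same route as the paper: both verify weak-$\ast$ $L^1$ convergence of $\mathcal{G}_w^{\nub} u_n$ on a bounded domain and then invoke \Cref{lem:propa3} with $H(\theta)=|\theta|^2$. The only differences are cosmetic: you exploit the fixed support $\Omega'$ (valid under the standing compact-support hypothesis of this section) whereas the paper exhausts by arbitrary precompact $D$, and you phrase the weak-$\ast$ verification via weak continuity of the bounded operator $\mathfrak{G}^{\nub}_w$ rather than via explicitly constructed functionals; your closing remark that weak lower semicontinuity of the $L^2$ norm already suffices is also correct and indeed makes the appeal to \Cref{lem:propa3} unnecessary.
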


\begin{proof}
Let $H(\xb):=|\xb|^2$. Then $H$ is convex, continuous and bounded below. Let $\theta_n(\xb):=\mathcal{G}_w^{\nub} u_n(\xb)$ and $\theta(\xb):=\mathcal{G}_w^{\nub} u(\xb)$. Then for any open and precompact set $D\subset\R^d$, $\theta_n,\theta\in L^1(D;\mathbb{R}^d)$ because
    \[\int_{D} |\mathcal{G}_w^{\nub} u(\xb)|d\xb \le \left(\int_{D} |\mathcal{G}_w^{\nub} u(\xb)|^2d\xb \right)^{\frac{1}{2}}(\mu(D))^{\frac{1}{2}}<\infty.\]
    For any $\phi\in C_c(D)$, $1\le k\le d$, define a linear functional $F^k_\phi:\cS_w^{\nub}(\Omega)\to\mathbb{R}$ by
    \[F^k_\phi(u):=\int_{D} [\mathcal{G}_w^{\nub} u(\xb)]_k\phi d\xb ,\]
    where $[\mathcal{G}_w^{\nub} u(\xb)]_k$ denotes the $k$-th component of $\mathcal{G}_w^{\nub}$. 
    Then $F^k_\phi$ is a bounded linear functional since
    \begin{align*}
        \left|F^k_\phi(u)\right|
        \le \left(\int_{D} |[\mathcal{G}_w^{\nub} u(\xb)]_k|^2d\xb \right)^{\frac{1}{2}}\cdot \left(\int_{D} |\phi(\xb)|^2d\xb \right)^{\frac{1}{2}}\leq C \|u\|_{\cS_w^{\nub}(\Omega)}. 
    \end{align*}
    Now since $u_n\rightharpoonup u$ in $\cS_w^{\nub}(\Omega)$, we have $F^k_\phi(u_n)\to F^k_\phi(u)$ as $n\to \infty$. Therefore $\theta_j\stackrel{\ast}{\rightharpoonup}\theta \in L^1(D;\mathbb{R}^d)$ and this yields
    \[
    \int_{D}|\mathcal{G}_w^{\nub} u(\xb)|^2d\xb \le \liminf_{n\to\infty} \int_{D}|\mathcal{G}_w^{\nub} u_n(\xb)|^2d\xb \le \liminf_{n\to\infty} \int_{\R^d}|\mathcal{G}_w^{\nub} u_n(\xb)|^2d\xb.
    \]
  by \Cref{lem:propa3}.
  Since $D\subset\R^d$ is arbitrary, 
     \cref{lscofhbgrad} is true.
\end{proof}

Finally, we are ready  to prove \Cref{thm:wintpoincare}. 
\begin{proof}[Proof of \Cref{thm:wintpoincare}]
We argue by contradiction. Suppose there exists $\{u_n\}\subset \cS_w^{\nub}(\Omega)$ with $\|u_n\|_{L^2(\Omega)}=1$ such that $\|\mathcal{G}_w^{\nub} u_n\|_{L^2(\mathbb{R}^d;\R^d)}\to 0$. Then $\|u_n\|_{\cS_w^{\nub}(\Omega)}$ is bounded. Since $\cS_w^{\nub}(\Omega)$ is a Hilbert space by \Cref{thm:fctspcmplt}, there exists a subsequence of $\{u_n\}$, still denoted by $\{u_n\}$ for convenience, that convergences weakly to some $u\in \cS_w^{\nub}(\Omega)$.

In the first step, we show $u=0$, i.e., $u_n\rightharpoonup 0$ in $\cS_w^{\nub}(\Omega)$. By
the weakly lower semi-continuous result in \Cref{prop:lscofhbgrad}, we have
\[\int_{\mathbb{R}^d}|\mathcal{G}_w^{\nub} u(\xb)|^2d\xb \le \liminf_{n\to\infty} \int_{\mathbb{R}^d}|\mathcal{G}_w^{\nub} u_n(\xb)|^2d\xb .\]
Now that $\|\mathcal{G}_w^{\nub} u_n\|_{L^2(\mathbb{R}^d;\R^d)}\to 0$, \[\int_{\mathbb{R}^d}|\mathcal{G}_w^{\nub} u(\xb)|^2d\xb =0,\]and thus  $\mathcal{G}_w^{\nub} u(\xb)=0$ for a.e. $\xb\in \mathbb{R}^d$. By \Cref{prop:hbgradkerisnull}, $u\equiv 0$ and the first step is done.

 In the second step, we show $u_n$ converges to $0$ strongly in $L^2$, which contradicts the assumption $\|u_n\|_{L^2(\Om)}=1$. 
 
Using the integration by parts formula and the characterizations of $\mathcal{D}^{-\nub}_w$ and $\mathcal{G}_w^{\nub}$ consecutively in \Cref{prop:properties}, it follows that (recall that $u_n=0$ in $\Om^c$),
\begin{equation}\label{eq:strongconvest}
     \begin{aligned}
     &\|\mathcal{G}_w^{\nub} u_n\|_{L^2(\R^d;\R^d)}^2 \\
     =& - \int_{\R^d} u_n(\xb) \mathcal{D}^{-\nub}_w  \circ \mathcal{G}_w^{\nub} u_n (\xb) d\xb\\
     =&\int_{\R^d} u_n(\xb) \left({\bm\beta}^{\nub}\ast \mathcal{G}_w^{\nub} u_n (\xb)-\cb_{\nub}\cdot\mathcal{G}_w^{\nub} u_n (\xb) \right)d\xb\\
     =&\int_{\R^d} u_n(\xb)\left({\bm\beta}^{\nub}\ast \mathcal{G}_w^{\nub} u_n\right) (\xb)d\xb\\
     &\qquad-\int_{\R^d} u_n(\xb)\cb_{\nub}\cdot \left(\int_{\R^d}{\bm\beta}^{\nub}(\yb-\xb)u_n(\yb)d\yb-\cb_{\nub}u_n(\xb)\right)d\xb\\
     =&(u_n,{\bm\beta}^{\nub} * \mathcal{G}_w^{\nub} u_n)_{\Om} +|\cb_{\nub}|^2\|u_n\|^2_{L^2(\Om)}-(u_n,K u_n)_{\Om}
 \end{aligned}
\end{equation}
where $(\cdot, \cdot)_\Om$ denote the $L^2(\Om)$ inner product and  $K:L^2(\Omega)\to L^2(\Omega)$ is defined as \[Ku(\xb):=\int_{\Omega} \cb_{\nub}\cdot{\bm\beta}^{\nub}(\yb-\xb)u(\yb)d\yb .\]
    Note that $K$ is well-defined as $|Ku|\le |\cb_{\nub}|w*|u|\in L^2(\mathbb{R}^d)$. 
    Now notice that by Young's convolution inequality
    \[(u_n,{\bm\beta}^{\nub} * \mathcal{G}_w^{\nub} u_n)_{\Om}\le \|u_n\|_{L^2(\Om)}\|{\bm\beta}^{\nub} * \mathcal{G}_w^{\nub} u_n\|_{L^2(\Om)}\le \sqrt{d}\|w\|_{L^1(\mathbb{R}^d)}\|\mathcal{G}_w^{\nub} u_n\|_{L^2(\mathbb{R}^d;\mathbb{R}^d)}\to 0.\]
    as $n\to\infty$. In addition, $|\cb_{\nub}|^2=|\hat{c}|^2>0$ by \cref{chat,eq:cnequalhatcn}. Therefore, if $(u_n,K u_n)_{\Om}\to 0$, then we reach a contradiction since \cref{eq:strongconvest} implies $\| u_n\|_{L^2(\Om)}\to 0$.
    In the following, we proceed to show $(u_n,K u_n)_{\Om}\to 0$ as $n\to\infty$. 
    
    Notice that by definition
    \[
    \begin{split}
    Ku(\xb) &= \int_\Om \cb_{\nub}\cdot\chi_{\nub}(\yb-\xb)\frac{\yb-\xb}{|\yb-\xb|}w(\yb-\xb) u(\yb)d\yb \\
    &=  \int_\Om \hat{c} \cdot\chi_{\nub}(\yb-\xb)\frac{\nub\cdot(\yb-\xb)}{|\yb-\xb|}w(\yb-\xb)u(\yb)d\yb \\
    & =: \int_\Om k(\xb-\yb) u(\yb) d\yb,
    \end{split}
    \]
    where we have used \cref{eq:cnequalhatcn} and $k(\zb):=\hat{c} \cdot\chi_{\nub}(-\zb)\frac{\nub\cdot(-\zb)}{|\zb|}w(\zb)\geq 0$. Notice that $k\in L^1(\R^d)$ as $w\in L^1(\R^d)$, it follows from Corollary 4.28 of \cite{Brez11} that $K:L^2(\Omega)\to L^2(\Omega)$ is compact. From the first step we have $u_n\rightharpoonup 0$ in $\cS_w^{\nub}(\Omega)$ and thus $u_n\rightharpoonup 0$ in $L^2(\mathbb{R}^d)$. Thus $K u_n\to 0$ in $L^2(\Omega)$ as $K:L^2(\Omega)\to L^2(\Omega)$ is compact. Therefore,  
 \[
 |(u_n,K u_n)_{\Om}|\le \|u_n\|_{L^2(\Om)}\|K u_n\|_{L^2(\Omega)}\to 0,\ n\to\infty.
 \]
 Hence the proof is completed.
\end{proof}

\section{Nonlocal Poincar\'e inequality for general kernel functions}
\label{sec:Poincare_nonint_kernel}
Our main goal in this section is to prove the Poincar\'e inequality for general kernel functions beyond the integrable and compactly supported ones used in \Cref{sec:Poincare_int_kernel}.
Throughout this section, we assume that the kernel function satisfies \cref{eq:kernelassumption} and the assumptions given as follows.

\begin{assu}\label{asmp:int}
Assume that  $w$ satisfies \cref{eq:kernelassumption},  and either one of the following conditions holds true:
\begin{enumerate}
    \item $\int_{\R^d}|\xb|w(\xb)d\xb<\infty$;
    \item there exists $R>0$ such that $w(\xb)=\frac{c_0}{|\xb|^{d+\alpha}}$ for some $c_0>0$ and $\alpha\in (0,1]$ when $|\xb|>R$. 
\end{enumerate}
We use $\overline{w}$ to denote the radial representation of $w$, i.e., $\overline{w}:[0,\infty)\to [0,\infty)$ satisfies $\overline{w}(|\xb|)=w(\xb)$ for $\xb\in\R^d$.
\end{assu}

\begin{rem}
Notice that \Cref{asmp:int} covers many cases of kernel functions seen in the literature. For example, compactly supported kernels, 
the fractional kernel $w(\xb) =C |\xb|^{-d-\alpha}$ used to study the Riesz fractional derivatives in \cite{shieh2015new,shieh2018new},
as well as the 
tempered fractional kernel $w(\xb) =C e^{-\lambda |\xb|}|\xb|^{-d-\alpha}$ in \cite{sabzikar2015tempered}.
\end{rem}

Under \Cref{asmp:int}, we can show the following result. 

\begin{thm}[Poincar\'e inequality for general kernel functions]
\label{thm:wnonintpoincare}
Let $\Om$ be a bounded domain with a continuous boundary. Under Assumption \ref{asmp:int}, the Poincar\'e inequality holds, i.e., there exists a constant $\Pi= \Pi(w,\Om,\nub)>0$ such that 
\begin{equation}\label{poincare}
     \|u\|_{L^2(\Om)}\le \Pi \|\mathfrak{G}^{\nub}_wu\|_{L^2(\R^d;\R^d)},\quad\forall u\in \cS_w^{\nub}(\Om).
\end{equation}
\end{thm}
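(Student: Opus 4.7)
The plan is to reduce the general case to the integrable compactly supported setting of \Cref{thm:wintpoincare} via a kernel truncation combined with a Fourier-symbol comparison. Given $w$ satisfying \Cref{asmp:int}, I would introduce the truncated kernel $w_0(\xb) := w(\xb)\chi_{r\le |\xb|\le R}$ for parameters $0<r<1<R<\infty$ to be chosen later. The moment conditions in \eqref{eq:kernelassumption} give $\int w_0\,d\xb \le r^{-1}M_w^1+M_w^2<\infty$, so $w_0\in L^1(\R^d)$ with compact support. By \Cref{prop:properties}, $\mathcal{G}^{\nub}_{w_0}u\in L^2(\R^d;\R^d)$ for every $u\in L^2(\R^d)$, whence $\cS^{\nub}_w(\Om)\subset \cS^{\nub}_{w_0}(\Om)$, and \Cref{thm:wintpoincare} furnishes $\Pi_{w_0}>0$ such that
\[
\|u\|_{L^2(\Om)}\le \Pi_{w_0}\|\mathcal{G}^{\nub}_{w_0}u\|_{L^2(\R^d;\R^d)},\quad \forall u\in \cS^{\nub}_w(\Om).
\]

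The heart of the argument is then to establish the Fourier-side comparison
\[
\|\mathcal{G}^{\nub}_{w_0}u\|_{L^2(\R^d;\R^d)}\le C\,\|\mathfrak{G}^{\nub}_w u\|_{L^2(\R^d;\R^d)},\quad \forall u\in \cS^{\nub}_w(\Om),
\]
with $C$ independent of $u$. Using the density of $C^\infty_c(\Om)$ in $\cS^{\nub}_w(\Om)$ from \Cref{thm:density}, one can promote \Cref{lem:FT} to the $L^2$ identity $\mathcal{F}(\mathfrak{G}^{\nub}_w u)(\xib)=\bm\lambda_w^{\nub}(\xib)\hat{u}(\xib)$ a.e., and similarly for $w_0$; Parseval then reduces the comparison to the pointwise Fourier-symbol bound $|\bm\lambda_{w_0}^{\nub}(\xib)|\le C|\bm\lambda_w^{\nub}(\xib)|$ for a.e.\ $\xib\in\R^d\setminus\{\bm 0\}$. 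I would verify this by splitting $\R^d\setminus\{\bm 0\}$ into small, intermediate, and large $|\xib|$ regimes. For small $|\xib|$, a first-order Taylor expansion of $e^{2\pi i\xib\cdot\zb}-1$ combined with the positive definiteness of the second-moment matrix $A=\int \chi_{\nub}(\zb)\,\zb\otimes\zb\,|\zb|^{-1}w\,d\zb$ yields the linear lower bound $|\bm\lambda_w^{\nub}(\xib)|\gtrsim |\xib|$ in Case (1); in Case (2), a scaling argument applied to the fractional tail gives the stronger lower bound $|\bm\lambda_w^{\nub}(\xib)|\gtrsim |\xib|^\alpha$. Since $|\bm\lambda_{w_0}^{\nub}(\xib)|\le C|\xib|$ from the same expansion applied to the integrable $w_0$, and $\alpha\le 1$, the ratio bound follows. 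For large $|\xib|$, $|\bm\lambda_w^{\nub}(\xib)|$ is bounded below by a positive constant (by a Riemann--Lebesgue argument when the local part of $w$ is in $L^1$, or via scaling-induced polynomial growth otherwise), while $|\bm\lambda_{w_0}^{\nub}(\xib)|$ is uniformly bounded. On any fixed annulus bounded away from $\bm 0$ and $\infty$, continuity of $\bm\lambda_w^{\nub}$ on $\R^d$ (via dominated convergence with $\min(1,|\zb|)w(\zb)\in L^1$) together with \Cref{prop:lambdatransandpositive} provides strictly positive lower bounds.

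The principal obstacle will be reconciling the different asymptotic regimes in Case (2), since the scaling identity $\bm\lambda_{c_0|\zb|^{-d-\alpha}}^{\nub}(\xib)=c_0|\xib|^\alpha\,\bm\lambda_{|\zb|^{-d-\alpha}}^{\nub}(\hat\xib)$ holds cleanly only for the fully fractional kernel over all of $\R^d$, whereas \Cref{asmp:int}(2) prescribes this form only on $|\xb|>R$. The plan to circumvent this is to decompose $\bm\lambda_w^{\nub}=\bm\lambda_{w_{\mathrm{frac}}}^{\nub}+\bm\lambda_{w-w_{\mathrm{frac}}}^{\nub}$ with $w_{\mathrm{frac}}(\zb)=c_0|\zb|^{-d-\alpha}$ taken globally, where the perturbation $w-w_{\mathrm{frac}}$ is supported in $|\xb|\le R$ with a finite $(|\zb|\wedge 1)$-weighted moment by \eqref{eq:kernelassumption}; its Fourier symbol is thereby controlled by $|\xib|^{\min(1,\alpha)}$ and appears as a lower-order perturbation of the $|\xib|^\alpha$ leading behavior of $\bm\lambda_{w_{\mathrm{frac}}}^{\nub}$ after appropriate parameter tuning. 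Once this pointwise Fourier comparison is in hand, chaining with \Cref{thm:wintpoincare} yields \eqref{poincare} with $\Pi=C\Pi_{w_0}$.
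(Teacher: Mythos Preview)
Your overall strategy matches the paper's exactly: choose an auxiliary integrable, compactly supported kernel (the paper takes $\phi=\min(1,w\chi_{B_1(\bm 0)})$ rather than your annular truncation $w_0$), establish the pointwise symbol comparison $|\bm\lambda^{\nub}_{\text{aux}}(\xib)|\le C|\bm\lambda_w^{\nub}(\xib)|$ via a small/intermediate/large $|\xib|$ split (this is the content of \Cref{lem:cmproffouriersym}), invoke \Cref{thm:wintpoincare} for the auxiliary kernel, and pass from $C_c^\infty(\Om)$ to $\cS_w^{\nub}(\Om)$ by \Cref{thm:density}.

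Two pieces of your execution have genuine gaps, however. For large $|\xib|$ you offer Riemann--Lebesgue when $w$ is locally integrable and ``scaling-induced polynomial growth otherwise,'' but \Cref{asmp:int} imposes no homogeneity on $w$ near the origin; $w$ can fail to be $L^1$ there without any scaling structure (e.g.\ $w(\zb)=|\zb|^{-d}(\log\tfrac{1}{|\zb|})^{-2}$ for small $|\zb|$), so neither alternative applies. The paper's fix is a monotonicity step you omit: the integrand in $\nub\cdot\Re(\bm\lambda_w^{\nub})(\xib)=\int_{\{\zb\cdot\nub>0\}}\tfrac{\zb\cdot\nub}{|\zb|}w(\zb)(1-\cos(2\pi\xib\cdot\zb))\,d\zb$ is nonnegative, so one may replace $w$ there by any $0\le f\le w$ with $f\in L^1$ and then apply Riemann--Lebesgue to $f$, regardless of whether $w$ itself is integrable. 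Separately, your decomposition $w=w_{\mathrm{frac}}+(w-w_{\mathrm{frac}})$ with $w_{\mathrm{frac}}(\zb)=c_0|\zb|^{-d-\alpha}$ on all of $\R^d$ fails at the allowed endpoint $\alpha=1$: then $\int_{|\zb|\le 1}|\zb|\,|\zb|^{-d-1}\,d\zb=\infty$, so $w_{\mathrm{frac}}$ violates \eqref{eq:kernelassumption} and neither $\bm\lambda_{w_{\mathrm{frac}}}^{\nub}$ nor $\bm\lambda_{w-w_{\mathrm{frac}}}^{\nub}$ is defined by \eqref{deffouriersymboldiv}; the ``perturbation'' cannot be treated as lower order. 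The paper sidesteps this by working directly with the imaginary-part scalar $\Lambda_w(|\xib|)$ from \Cref{lem:lambdaexplicitform} in polar coordinates, splitting only the radial integral at $r=R$ so that the exact fractional form is used solely on the tail and no global additive decomposition of $w$ is needed.
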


For general kernel functions, we do not have a direct analogue of \cref{eq:strongconvest} since the single integral defining $\mathcal{D}^{\nub}_w $ cannot be separated into two parts. Motivated by the fact that singular kernels usually correspond to stronger norms than integrable kernels, e.g., the Riesz fractional gradients lead to Bessel potential spaces \cite{shieh2015new}, it is a natural idea to choose an integrable and compactly supported kernel by which $w$ is bounded below, i.e., a kernel $\phi$ satisfying \cref{eq:kernelassumption} and 
\begin{equation}\label{truncknl}
    0\le \phi(\xb)\le w(\xb),\; \supp\ \phi\subset \overline{B_1(\bm{0})} \;\text{ and }\; 0<\int_{\R^d} \phi(\xb)d\xb<\infty, 
\end{equation}
and utilize the Poincar\'e inequality for integrable kernels with compact support. This further requires a comparison of the norms $\|\mathfrak{G}^{\nub}_wu\|_{L^2(\R^d;\R^d)}$ and $\|\mathfrak{G}_\phi^{\nub}u\|_{L^2(\R^d;\R^d)}$ which is not a trivial task. Here, we resort to the Fourier analysis. 
Let be ${\bm\lambda}_w^{\nub}$ and ${\bm\lambda}_\phi^{\nub}$ be the Fourier symbols are defined by \cref{deffouriersymboldiv}. 
Notice that if there exists a constant $C=C(w,d)>0$ independent of $\nub$ such that 
\begin{equation}\label{Fsymbest}
    |{\bm\lambda}_w^{\nub}(\xib)|\ge C|{\bm\lambda}_\phi^{\nub}(\xib)|,\quad\forall\xib\in\R^d, 
\end{equation}
then we have for any $u\in C^\infty_c(\Om)$,
\begin{align*}
    \|\mathcal{G}_w^{\nub}u\|^2_{L^2(\R^d;\R^d)}&=\int_{\R^d}\left|{\bm\lambda}_w^{\nub}(\xib)\right|^2|\hat{u}(\xib)|^2d\xib\\
    &\ge C^2\int_{\R^d}\left|{\bm\lambda}_\phi^{\nub}(\xib)\right|^2|\hat{u}(\xib)|^2d\xib=C^2\|\mathcal{G}_\phi^{\nub}u\|^2_{L^2(\R^d;\R^d)},
\end{align*}
and the Poincar\'e inequality for general kernels can be further inferred.

\begin{lem}\label{lem:cmproffouriersym}
Assume that $w$ satisfies \Cref{asmp:int}. Then there exists a kernel function $\phi$ satisfying \cref{eq:kernelassumption} and \cref{truncknl} such that \cref{Fsymbest} holds, where ${\bm\lambda}_w^{\nub}$ and ${\bm\lambda}_\phi^{\nub}$ are the Fourier symbols defined by \cref{deffouriersymboldiv}.
\end{lem}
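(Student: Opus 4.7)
My plan is to take $\phi$ to be an integrable, compactly supported truncation of $w$, and verify \eqref{Fsymbest} pointwise by a three-regime analysis of the Fourier symbols.

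For the truncation, set $\phi(\xb):=\min\{w(\xb),1\}\chi_{\overline{B_1(\bm 0)}}(\xb)$. This inherits radiality, nonnegativity, and positivity on $\{0<|\xb|\le\epsilon_0\}$ from $w$, has compact support in $\overline{B_1(\bm 0)}$, and lies in $L^\infty\cap L^1(\R^d)$ with $\phi\le w$ pointwise, so it satisfies both \eqref{eq:kernelassumption} and \eqref{truncknl}. By the orthogonal-rotation identity \eqref{eq:foursymborthotrans}, the ratio $|\bm\lambda_w^{\nub}(\xib)|/|\bm\lambda_\phi^{\nub}(\xib)|$ is invariant under simultaneous rotation of $\nub$ and $\xib$, so I reduce to $\nub=\eb_1$ and aim to bound $\inf_{\etab\ne\bm 0}|\bm\lambda_w^{\eb_1}(\etab)|/|\bm\lambda_\phi^{\eb_1}(\etab)|$ from below by a positive constant. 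The working estimates are: the imaginary-part lower bound $|\bm\lambda_w^{\eb_1}(\etab)|\ge|\Lambda_w(|\etab|)|$ (using that $\Im\bm\lambda_w^{\eb_1}\parallel\etab$ from Lemma \ref{lem:lambdaexplicitform}), the real-part lower bound $|\bm\lambda_w^{\eb_1}(\etab)|\ge\int_{z_1>0}\tfrac{z_1}{|\zb|}w(1-\cos(2\pi\etab\cdot\zb))d\zb$, the upper bound $|\bm\lambda_\phi^{\eb_1}(\etab)|\le 2\sqrt 2\pi M_\phi^1|\etab|$ from \eqref{lambdaupbdd} (with $M_\phi^2=0$), and the uniform boundedness $\|\bm\lambda_\phi^{\eb_1}\|_{L^\infty}<\infty$ by Riemann--Lebesgue.

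For small $|\etab|$ under Case (1) of Assumption \ref{asmp:int}, dominated convergence with dominating function $2\pi|\zb|w\in L^1$ shows $\Lambda_w(r)/r\to\pi\tilde M_w^1/d>0$ with $\tilde M_w^1:=\int|\zb|w\,d\zb$, so $|\bm\lambda_w^{\eb_1}|\gtrsim|\etab|$ uniformly in direction, matching the linear upper bound on $|\bm\lambda_\phi^{\eb_1}|$. For small $|\etab|$ under Case (2), I decompose $w=w_{\mathrm{loc}}+w_{\mathrm{tail}}$ with $w_{\mathrm{tail}}=c_0|\xb|^{-d-\alpha}\chi_{|\xb|>R}$ and substitute $\zb=\yb/r$ to obtain
\[
\Lambda_{w_{\mathrm{tail}}}(r)=\tfrac{c_0}{2}r^\alpha\!\int_{|\yb|>Rr}|\yb|^{-d-\alpha-1}y_1\sin(2\pi y_1)\,d\yb,
\]
whose limit as $r\to 0^+$ is a positive constant for $\alpha\in(0,1)$ via the Mellin identity $\int_0^\infty x^{-\alpha-1}\sin x\,dx=-\Gamma(-\alpha)\sin(\pi\alpha/2)>0$ (after polar decomposition in $\R^d$), giving $|\bm\lambda_w^{\eb_1}|\gtrsim|\etab|^\alpha$; for the borderline $\alpha=1$, a Taylor expansion of $\sin(2\pi y_1)$ in the inner region produces a logarithmic growth with positive coefficient $\propto\int_{S^{d-1}}\hat y_1^2\,d\omega>0$, yielding $|\bm\lambda_w^{\eb_1}|\gtrsim|\etab|\,|\log|\etab||$; in both subcases the ratio blows up as $|\etab|\to 0^+$. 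For large $|\etab|$, Riemann--Lebesgue applied to $\chi_{\{z_1>0,|\zb|>\epsilon\}}\tfrac{z_1}{|\zb|}w\in L^1(\R^d)$ in the real-part lower bound gives $\liminf_{|\etab|\to\infty}|\bm\lambda_w^{\eb_1}(\etab)|\ge\int_{|\zb|>\epsilon,z_1>0}\tfrac{z_1}{|\zb|}w\,d\zb>0$, and combined with the global boundedness of $|\bm\lambda_\phi^{\eb_1}|$ this controls the ratio on $\{|\etab|\ge 1\}$. On any intermediate compact annulus, continuity together with the pointwise positivity $|\bm\lambda_w^{\eb_1}|>0$ from Proposition \ref{prop:lambdatransandpositive} yields a positive lower bound by compactness, and the minimum over the three regimes gives the constant $C$ in \eqref{Fsymbest}.

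The main obstacle is the small-$|\etab|$ analysis in Case (2): one must verify the strict positivity of the scaled integral $\int|\yb|^{-d-\alpha-1}y_1\sin(2\pi y_1)\,d\yb$ despite the sign oscillations of $\sin$, which I resolve through the Mellin identity for $\alpha\in(0,1)$ and a direct Taylor-plus-angular-quadrature argument for $\alpha=1$. A secondary technicality is that in Case (2) the full first moment $\int|\zb|w\,d\zb$ is infinite, so the fractional tail must be isolated from $w_{\mathrm{loc}}$ before any asymptotic expansion is performed.
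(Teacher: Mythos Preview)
Your proposal is correct and takes essentially the same approach as the paper: the same truncation $\phi=\min\{w,1\}\chi_{\overline{B_1}}$, the same three-regime split in $|\xib|$, the imaginary-part bound via $\Lambda_w$ for small frequencies, the real-part bound plus Riemann--Lebesgue for large frequencies, and continuity/positivity on the intermediate annulus. Your small-frequency analysis in Case~(2) via the Mellin identity (with a separate logarithmic computation at $\alpha=1$) is in fact more careful than the paper's, which invokes $\int_0^\infty r^{-1-\alpha}\sin(2\pi r)\,dr\in(0,\infty)$---an integral that diverges at the endpoint $\alpha=1$.
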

\begin{proof}
    We divide the proof of \cref{Fsymbest} into two steps. Along the proof, the desired kernel function $\phi$ will be constructed, and more precisely, is defined by \cref{defphi}. Without loss of generality in the following steps we assume $d\ge 2$. The case for $d=1$ is similar.

    \textbf{Step I.} We prove that there exists $N_1=N_1(w,d)\in (0,1)$ and $C_1=C_1(w,d)>0$ such that 
    \begin{equation}\label{Fsymbestsmallxi}
        |{\bm\lambda}_w^{\nub}(\xib)|\ge C_1|{\bm\lambda}_\phi^{\nub}(\xib)|,\quad\forall |\xib|<N_1. 
    \end{equation}
    Since $\phi$ is integrable and satisfies \cref{truncknl}, there exists $C>0$ depending on $w$ (as $\phi$ itself depends on $w$) such that for $|\bm\xi|\le 1$, 
    \begin{equation}\label{L1kerFsymbbddbyxi}
        |{\bm\lambda}_\phi^{\nub}(\xib)|\le 2\sqrt{2}\pi|\bm\xi|\int_{|\zb|\le 1}\phi(\zb)d\zb=C|\bm\xi|.
    \end{equation}
    Observe that $\Im ({\bm\lambda}_w^{\nub})(\xib)$ is a scalar multiple of $\frac{\xib}{|\xib|}$ as a result of \Cref{lem:lambdaexplicitform}, i.e., $\Im ({\bm\lambda}_w^{\nub})(\xib)=\Lambda_w(|\xib|)\frac{\xib}{|\xib|}$ where $\Lambda_w$ is given by \cref{defLambda}.
Using polar coordinates, we obtain
\begin{align*}
    \Lambda_w (|\xib|)&=\frac{1}{2}\int_0^{\pi}\int_0^\infty \cos(\theta) \overline{w}(r)\sin(2\pi|\xib|r\cos(\theta))r^{d-1}\sin^{d-2}(\theta)drd\theta\\
    &\quad\quad\int_0^{\pi}\sin^{d-3}(\varphi_1)d\varphi_1\int_0^{\pi}\sin^{d-4}(\varphi_2)d\varphi_2\cdots\int_0^{\pi}\sin(\varphi_{d-3})d\varphi_{d-3}\int_0^{2\pi}d\varphi_{d-2}\\
    &=\frac{1}{2}\omega_{d-2}\int_0^{\pi}\int_0^\infty \cos(\theta) \overline{w}(r)\sin(2\pi|\xib|r\cos(\theta))r^{d-1}\sin^{d-2}(\theta)drd\theta\\
    &=\omega_{d-2}\int_0^{\frac{\pi}{2}}\int_0^\infty \cos(\theta)\sin^{d-2}(\theta)r^{d-1} \overline{w}(r)\sin(2\pi|\xib|r\cos(\theta))drd\theta,
\end{align*}
where $\omega_{d-1}=\frac{2\pi^{d/2}}{\Gamma(d/2)}$ is the surface area of $(d-1)$-sphere $\S^{d-1}$.
Now we claim that for $w$ in \Cref{asmp:int}, \[\Lambda_w (|\xib|)\ge C(w,d)|\xib|,\quad\forall |\xib|<N_1.\]Then \cref{Fsymbestsmallxi} holds by \cref{L1kerFsymbbddbyxi} and $|{\bm\lambda}_w^{\nub}(\xib)|\ge |\Im ({\bm\lambda}_w^{\nub})(\xib)|\ge \Lambda_w (|\xib|)$. We prove the claim by two cases to conclude Step I.

\textbf{Case (i).} Suppose $w$ satisfies \Cref{asmp:int} (1). Then $\int_0^\infty r^d \overline{w}(r)dr<\infty$. Since $g(r,\theta):=\cos^2(\theta)\sin^{d-2}(\theta) r^d\overline{w}(r)\in L^1\left((0,\infty)\times (0,\frac{\pi}{2})\right)$, by dominated convergence theorem,
\begin{align*}
    \frac{\Lambda_w (|\xib|)}{|\xib|}&=2\pi \omega_{d-2}\int_0^{\frac{\pi}{2}}\int_0^\infty \cos^2(\theta)\sin^{d-2}(\theta)r^{d} \overline{w}(r)\frac{\sin(2\pi|\xib|r\cos(\theta))}{2\pi|\xib|r\cos(\theta)}drd\theta\\
    &\to 2\pi \omega_{d-2}\int_0^{\frac{\pi}{2}}\cos^2(\theta)\sin^{d-2}(\theta)d\theta \int_0^\infty r^{d} \overline{w}(r)dr>0,\quad\text{as }|\xib|\to 0.
\end{align*}
Therefore, there exists $N_1=N_1(w,d)\in (0,1)$ such that the claim holds.

\textbf{Case (ii).} Suppose $w$ satisfies \Cref{asmp:int} (2). Assume without loss of generality that $R=1$. Then $\int_0^1 r^d \overline{w}(r)dr<\infty$ and $\overline{w}(r)=\frac{c_0}{r^{d+\alpha}}$ with $\alpha\in (0,1]$ for $r>1$. We estimate 
$\Lambda_w(|\xib|)$ by discussing $r\le 1$ and $r>1$. On the one hand, since $\sin x\ge \frac{1}{2}x$ for $|x|$ sufficiently small, there exists $N_1\in (0,1)$ such that $\int_0^1 r^{d-1}\overline{w}(r)\sin(2\pi |\xib|r\cos(\theta))dr\ge \pi|\xib|\cos(\theta)\int_0^1r^d\overline{w}(r)dr$ for $|\xib|<N_1$. On the other hand, there exist $C=C(\alpha,c_0)$ and $N_1=N_1(\alpha)\in (0,1)$ such that 
\begin{align*}
   &\quad \int_1^\infty r^{d-1}\frac{c_0}{r^{d+\alpha}}\sin(2\pi|\xib|r\cos(\theta))dr
    =c_0|\xib|^\alpha (\cos\theta)^\alpha\int_{|\xib|\cos\theta}^\infty \frac{1}{r^{1+\alpha}}\sin(2\pi r)dr\\
    & \ge C|\xib|^\alpha(\cos\theta)^\alpha\ge C|\xib|\cos(\theta),\quad \forall |\xib|<N_1,\ \theta\in \left(0,\frac{\pi}{2}\right),
\end{align*}
where we used $\int_0^\infty \frac{1}{r^{1+\alpha}}\sin(2\pi r)dr\in (0,\infty)$ and dominated convergence theorem for the second last inequality. Combining both cases for $r\le 1$ and $r>1$ yields the claim.

\textbf{Step II.} We prove that there exist $N_2=N_2(w,d)>1$ and $C_2=C_2(w,d)>0$ such that 
    \begin{equation}\label{Fsymbestlargexi}
        |{\bm\lambda}_w^{\nub}(\xib)|\ge C_2|{\bm\lambda}_\phi^{\nub}(\xib)|,\quad\forall |\xib|>N_2, 
    \end{equation}
    and $C_3=C_3(w,d)>0$ such that
    \begin{equation}\label{Fsymbestmediumxi}
        |{\bm\lambda}_w^{\nub}(\xib)|\ge C_3|{\bm\lambda}_\phi^{\nub}(\xib)|,\quad\forall |\xib|\in [N_1,N_2].
    \end{equation}
Since $\phi$ is integrable, 
    \begin{equation}\label{L1kerFsymbbddbyconst}
        |{\bm\lambda}_\phi^{\nub}(\xib)|\le 2\int_{|\zb|\le 1}\phi(\zb)d\zb=2I_\phi,\quad \xib\in\R^d,
    \end{equation}
where $I_\phi:=\int_{\mathbb{R}^d}\phi(\zb)d\zb\in (0,\infty)$ depends on $w$. Denote $\hat{\xib}:=\frac{\xib}{|\xib|}$. Recall that $\nub=R_{\nub}\eb_1$. Then
\begin{equation}\label{Fsymbrealptest}
    \begin{aligned}
        |{\bm\lambda}_w^{\nub}(\xib)|&\ge|\Re ({\bm\lambda}_w^{\nub})(\xib)|\ge |\nub\cdot \Re ({\bm\lambda}_w^{\nub})(\xib)|\\
        &=\int_{\{\zb\cdot\nub>0\}}\frac{\zb\cdot\nub}{|\zb|}w(\zb)(1-\cos(2\pi \xib\cdot\zb))d\zb\\
    &=\int_{\{z_1>0\}}\frac{z_1}{|\zb|}w(\zb)(1-\cos(2\pi |\xib| (R_{\nub}^T\hat{\xib})\cdot \zb))d\zb.
    \end{aligned}
\end{equation}
For any $f\in L^1(\R^d)$, we define a function $I:\R_+\times\S^{d-1}\to\R$ by \[I(\rho,\etab):=\int_{\R^d}f(z)(1-\cos(2\pi \rho\etab\cdot\zb))d\zb.\]

\textbf{Claim.} For $w$ in \Cref{asmp:int}, there exists $f\in L^1(\R^d)$ depending only on $w$ such that 
\begin{equation}\label{ineqonf}
    0\le f(\zb)\le \chi_{\eb_1}(\zb)\frac{z_1}{|\zb|}w(\zb)
\end{equation}
and \begin{equation}\label{estofI}
    I(|\xib|,\etab)\ge C(d)I_\phi,\quad\forall |\xib|>N_2,\ \etab\in\S^{d-1}.
\end{equation}
Once the claim is proved,  \cref{Fsymbestlargexi} and \cref{Fsymbestmediumxi} follows. Indeed, \cref{Fsymbestlargexi} holds by \cref{L1kerFsymbbddbyconst}, \cref{Fsymbrealptest}, \cref{ineqonf} and \cref{estofI}. Notice that $I$ is a continuous function and $I(|\xib|,\etab)>0$ for any $|\xib|\in [N_1,N_2]$ and $\etab\in\S^{d-1}$, \cref{Fsymbestmediumxi} holds as $\min_{[N_1,N_2]\times\S^{d-1}} I(\rho,\etab)>0$ and $I_\phi>0$. We prove the claim to conclude Step II and thus finish the whole proof.

\textbf{Proof of the claim.} We choose $\phi$ as
\begin{equation}\label{defphi}
    \phi(\xb) = \min(1, w(\xb)\chi_{B_1(\bm{0})}(\xb)).
\end{equation}
Then $\phi$ satisfies \cref{eq:kernelassumption} and \cref{truncknl}. Let $f(\zb):=\chi_{\eb_1}(\zb)\frac{z_1}{|\zb|}\phi(\zb)$, then $f\in L^1(\R^d)$ satisfies \cref{ineqonf} and for $V:=(0,\infty)\times (0,\pi/2)\times (0,\pi)^{d-3}\times (0,2\pi)$,
\begin{align*}
    &\quad\ \int_{\R^d}f(\zb)d\zb=\int_{\{z_1>0\}}\frac{z_1}{|\zb|}\phi(\zb)d\zb\\
    &=\idotsint_{V} \cos(\varphi_1)\bar{\phi}(r)r^{d-1}\sin^{d-2}(\varphi_1)\sin^{d-3}(\varphi_2)\cdots\sin(\varphi_{d-2})drd\varphi_1d\varphi_2\cdots d\varphi_{d-2}d\theta\\
    &=\omega_{d-2}\int_0^{\frac{\pi}{2}}\cos(\varphi_1)\sin^{d-2}(\varphi_1)d\varphi_1\int_0^\infty r^{d-1}\bar{\phi}(r)dr\\
    &= \frac{\omega_{d-2}}{(d-1)\omega_{d-1}}\int_{\R^d} \phi(\zb)d\zb,
\end{align*}
where $\bar\phi$ is the radial representation of $\phi$. Note that the above computation holds for $d\ge 3$ and can be easily done for $d=1$ or $2$.
Then by Riemann-Lebesgue lemma, there exists $N_2=N_2(w,d)>1$ such that $\forall |\xib|>N_2$, \[\int_{\R^d}f(\zb)\cos(2\pi |\xib| \etab\cdot \zb)d\zb<\frac{1}{2}\int_{\R^d}f(\zb)d\zb,\quad\forall\etab\in \S^{d-1}.\]
Then for $|\xib|>N_2$ and $\etab\in\S^{d-1}$, \[I(|\xib|,\etab)= \int_{\R^d}f(\zb)(1-\cos(2\pi |\xib| \etab\cdot \zb))d\zb\ge \frac{1}{2}\int_{\R^d}f(\zb)d\zb\ge C(d)I_\phi,\]where $C(d)=\frac{\omega_{d-2}}{2(d-1)\omega_{d-1}}$. Then the claim is proved.
\end{proof}

Now we are ready to prove the the Poincar\'e inequality for general kernels.

\begin{proof}[Proof of \Cref{thm:wnonintpoincare}]
For any $u\in C_c^\infty(\Om)$, we have
$\mathfrak{G}^{\nub}_wu = \mathcal{G}_w^{\nub}u$. 
By \Cref{lem:cmproffouriersym} and the comment below \cref{Fsymbest}, there exists a kernel function $\phi$ satisfying \cref{eq:kernelassumption} and \cref{truncknl} such that \[ \|\mathfrak{G}^{\nub}_wu\|_{L^2(\R^d;\R^d)}=\|  \mathcal{G}_w^{\nub}u\|_{L^2(\R^d;\R^d)} \geq C \| \mathcal{G}_\phi^{\nub}u\|_{L^2(\R^d;\R^d)}\] for some $C=C(w,d)>0$. Therefore, using \Cref{thm:wintpoincare} for the integrable and compactly supported kernel $\phi$, we obtain 
\[\|u\|_{L^2(\Om)}\le \Pi(\phi,\nub,\Om)\|\mathcal{G}_\phi^{\nub}u\|_{L^2(\R^d;\R^d)}\le C^{-1}\Pi(\phi,\nub,\Om) \|\mathfrak{G}^{\nub}_wu\|_{L^2(\R^d;\R^d)}, \; \forall u\in C_c^\infty(\Om).
\]
Denote $\Pi(w,\nub,\Om):=C^{-1}\Pi(\phi,\nub,\Om)$.
By the density result in \Cref{thm:density}, for every $u\in \cS_w^{\nub}(\Om)$, there exists $\{u_j\}_{j=1}^\infty\subset C_c^\infty(\Om)$ such that $u_j\to u$ in $\cS_w^{\nub}(\Om)$. Hence,
\[\|u_j-u\|_{L^2(\Om)}\to 0,\ \text{ and }\ \|\mathfrak{G}^{\nub}_wu_j-\mathfrak{G}^{\nub}_wu\|_{L^p(\R^d;\R^d)}\to 0,\quad j\to\infty.\]Since \[ \|u_j\|_{L^2(\Om)}\le \Pi(w,\nub,\Om)\|\mathfrak{G}^{\nub}_wu_j\|_{L^2(\R^d;\R^d)},\]letting $j\to\infty$ yields \eqref{poincare}.
\end{proof}

\section{Applications}\label{sec:applications}
In this section, we provide some applications of the nonlocal Poincar\'e inequality. Assume that $w$ is a kernel function satisfying \Cref{asmp:int}, $\nub\in\R^d$ is a fixed unit vector, $\Om\subset\R^d$ is an open bounded domain with a continuous boundary. Note that by the nonlocal Poincar\'e inequality \Cref{thm:wintpoincare} and \Cref{thm:wnonintpoincare}, the full norm $\|u\|_{\cS_w^{\nub}(\Om)}$ is equivalent to the seminorm $\|\mathfrak{G}^{\nub}_w u\|_{L^2(\R^d;\R^d)}$ for $u\in \cS_w^{\nub}(\Om)$. Thus in this section we abuse the notation and use $\|\cdot\|_{\cS_w^{\nub}(\Om)}$ to denote the seminorm.

\subsection{Nonlocal convection-diffusion equation}
In \Cref{sec:operators} we have defined nonlocal gradient and divergence operator for the fixed unit vector $\nub$. It turns out that one can define these notions corresponding to a unit vector field $\nb=\nb(\xb)$ as well. Specifically, for a measurable function $u:\R^d\to\R$ and a measurable vector field $\vb:\R^d\to\R^d$, $\mathcal{G}^{\nb}_w u:\Om\to\R$ and $\mathcal{D}^{\nb}_w \vb:\Om\to\R^d$ are defined by
 \[\mathcal{G}^{\nb}_w u(\xb):=\lim_{\epsilon\to 0}\int_{\R^d\backslash B_\epsilon(\xb)}\chi_{\nb(\xb)}(\yb-\xb)(u(\yb)-u(\xb))\frac{\yb-\xb}{|\yb-\xb|}w(\yb-\xb)d\yb\]
 and
 \[\mathcal{D}^{\nb}_w\vb(\xb):=\lim_{\epsilon\to 0}\int_{\R^d\backslash B_\epsilon(\xb)}\frac{\yb-\xb}{|\yb-\xb|}\cdot(\chi_{\nb(\yb)}(\yb-\xb)\vb(\yb)+\chi_{\nb(\xb)}(\xb-\yb)\vb(\xb))w(\yb-\xb)d\yb,\]
 respectively. Let $\phi$ be an integrable kernel with compact support satisfying \cref{eq:kernelassumption}. Then the integration by parts formula in \Cref{prop:properties} (2) holds for the vector field $\nb$ and kernel $\phi$. The proof is similar and thus omitted.
 
For a diffusivity function $\ep=\ep(\xb)\in L^\infty(\R^d)$ with a positive lower bound $\ep_1>0$, a vector field $\bm{b}\in L^\infty(\R^d;\R^d)$ and a function $f\in (\cS_w^{\nub}(\Omega))^\ast$, we consider the nonlocal convection-diffusion model problem formulated as
\begin{equation}\label{eq:cddeq}
    \left\{
    \begin{aligned}
    - \mathfrak{D}^{-\nub}_w  (\epsilon \mathfrak{G}^{\nub}_w u) + \bm{b}\cdot\mathcal{G}_{\phi}^{\nb} u&=f\quad \text{in }\ \Omega,\\
    u&=0\quad\text{in }\ \R^d\backslash\Omega.
    \end{aligned}\right.
\end{equation}
\Cref{eq:cddeq} is a  nonlocal analogue of the classical convection-diffusion equation, see, e.g., \cite{d2017nonlocal,leng2022petrov,tian2017conservative,tian2015nonlocal} for related discussions. 
The new formulation using  $\mathfrak{D}^{-\nub}_w  (\epsilon \mathfrak{G}^{\nub}_w u)$ for the nonlocal diffusion allows the possibility to explore mixed-type numerical methods for \cref{eq:cddeq} in the future.

\begin{rem}
\label{rem:convdiff_compactkernel}
If the kernel function $w$ has compact support, the boundary condition in \cref{eq:cddeq} only needs to be imposed on a bounded domain outside $\Omega$. For example, assume $\supp\ w \subset B_\del(\bm{0})$ for $\del>0$, then for the first equation to be well-defined on $\Om$, we only need $u=0$ on $\Om_{2\del}\backslash\Om$ where $\Om_{2\del}=\{\xb\in \R^d:\dist(\xb,\Om)<2\del \}$.
\end{rem}
We define the bilinear form $b(\cdot,\cdot):\cS_w^{\nub}(\Omega)\times \cS_w^{\nub}(\Omega)\to\R$ associated with \cref{eq:cddeq} by
\begin{equation}
     b(u,v):=(\epsilon \mathfrak{G}^{\nub}_wu, \mathfrak{G}^{\nub}_wv)_{L^2(\R^d;\R^d)}+(\bm{b}\cdot\mathcal{G}_{\phi}^{\nb} u,v)_{L^2(\R^d)}.
\end{equation}
Then the weak formulation is given as follows.
\begin{equation}\label{eq:wkformcddeq}
    \left\{
    \begin{aligned}
         & \text{Find }u\in \cS_w^{\nub}(\Omega) \text{ such that: } \\
         & b(u,v)=\langle f,v\rangle,\quad \forall v\in \cS_w^{\nub}(\Omega).
    \end{aligned}
    \right.
\end{equation}
  The vector field $\nb$ is given in terms of $\bm{b}$ by the following relation 
  \beq
  \label{eq:vector_n}
  \nb=-\frac{\bm{b}}{|\bm{b}|}.
  \eeq
To establish the well-posedness of the model problem \eqref{eq:cddeq}, we give an additional assumption on the velocity field $\bm{b}$.

\begin{assu}\label{asmp:bbdd}
Assume the velocity field $\bm{b}\in L^\infty(\R^d;\R^d)$ satisifies either one of the following assumptions:
\begin{enumerate}[(i)]
    \item $\mathcal{D}_{\phi}^{-\nb}\ \bm{b} \leq 0$, or 
    \item $|\mathcal{D}_{\phi}^{-\nb}\ \bm{b}| \leq \eta $ where $\eta < 2\ep_1/\Pi^2$. 
\end{enumerate}
\end{assu}
We further present a result on the convection part of the bilinear form $b(u, v)$. Similar result can be found in \cite{leng2022petrov}.
\begin{lem}\label{lem:convest}
Let $v\in \cS_w^{\nub}(\Omega)$ and $\nb$ be defined as \cref{eq:vector_n}. Then
$$(\bm{b}\cdot\mathcal{G}_{\phi}^{\nb} v,v)_{L^2(\Omega)}\ge -\frac{1}{2} (v^2, \mathcal{D}_{\phi}^{-\nb}\bb)_{L^2(\Om)}. $$
\end{lem}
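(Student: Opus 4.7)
The strategy mimics the local calculation $\int \bb\cdot\nabla v \, v = -\tfrac12\int v^2\,\nabla\!\cdot\!\bb$, but performed at the level of the double integral defining $\mathcal{G}_\phi^{\nb}$, so that the sign restriction imposed by the half-ball characteristic function works in our favor. Throughout, we exploit the fact that $v$ vanishes off $\Omega$, $\bb\in L^\infty(\R^d;\R^d)$, and $\phi\in L^1(\R^d)$ with compact support, which guarantees absolute integrability of all the double integrals that appear and thus legitimizes Fubini's theorem.

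First, I will write out $(\bb\cdot\mathcal{G}_\phi^{\nb}v,v)_{L^2(\Omega)}$ as a double integral over $\R^d\times\R^d$ and apply the algebraic identity
\[
 v(\xb)\bigl(v(\yb)-v(\xb)\bigr) \;=\; \tfrac12\bigl(v(\yb)^2-v(\xb)^2\bigr) \;-\; \tfrac12\bigl(v(\yb)-v(\xb)\bigr)^2.
\]
This splits the quantity into two pieces: a \emph{quadratic-difference piece}
\[
 Q:=-\tfrac12 \iint \bigl(v(\yb)-v(\xb)\bigr)^2\,\bb(\xb)\!\cdot\!\tfrac{\yb-\xb}{|\yb-\xb|}\,\chi_{\nb(\xb)}(\yb-\xb)\phi(\yb-\xb)\,d\xb d\yb,
\]
and a \emph{telescoping piece} $T$ of the same form but with $v(\yb)^2-v(\xb)^2$ replacing $(v(\yb)-v(\xb))^2/(-1)$.

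For the piece $Q$, I will use the defining property $\nb(\xb)=-\bb(\xb)/|\bb(\xb)|$ to observe that on the support of $\chi_{\nb(\xb)}(\yb-\xb)$ we have $(\yb-\xb)\!\cdot\!\nb(\xb)\ge 0$, hence $\bb(\xb)\cdot\tfrac{\yb-\xb}{|\yb-\xb|}\le 0$. Therefore the integrand of $Q$ (before the minus sign in front) is nonpositive, so $Q\ge 0$.

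The main technical step will be to identify $T$ with $-\tfrac12(v^2,\mathcal{D}_\phi^{-\nb}\bb)_{L^2(\Omega)}$. To do so, I will swap $\xb\leftrightarrow\yb$ in the $v(\yb)^2$ half of $T$, use that $\phi$ is radial and $\tfrac{\yb-\xb}{|\yb-\xb|}=-\tfrac{\xb-\yb}{|\xb-\yb|}$, and note $\chi_{-\nb(\yb)}(\yb-\xb)=\chi_{\nb(\yb)}(\xb-\yb)$. Combining the swapped half with the $v(\xb)^2$ half produces exactly the symmetric combination $\chi_{\nb(\yb)}(\xb-\yb)\bb(\yb)+\chi_{\nb(\xb)}(\yb-\xb)\bb(\xb)$ that appears inside $\mathcal{D}_\phi^{-\nb}\bb(\xb)$, yielding the claimed identity $T=-\tfrac12(v^2,\mathcal{D}_\phi^{-\nb}\bb)_{L^2(\Omega)}$. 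Putting $Q\ge 0$ and the formula for $T$ together gives the inequality.

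The only real obstacle is bookkeeping: keeping track of the two characteristic functions $\chi_{\nb(\xb)}$ and $\chi_{\nb(\yb)}$ and their behavior under the swap, and checking that the sign of $\bb\cdot\hat r$ is consistent with the choice $\nb=-\bb/|\bb|$. Once the swap is handled carefully, the result drops out.
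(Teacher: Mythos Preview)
Your proof is correct and is essentially the same argument as the paper's: both split the expression into a nonnegative ``quadratic-difference'' piece (controlled by the sign condition coming from $\nb=-\bb/|\bb|$) and a ``telescoping'' piece that, after one swap of variables, becomes exactly $-\tfrac12(v^2,\mathcal{D}_\phi^{-\nb}\bb)$. The only cosmetic difference is that the paper first invokes the nonlocal integration-by-parts formula $(\bb\cdot\mathcal{G}_\phi^{\nb}v,v)=-(v,\mathcal{D}_\phi^{-\nb}(\bb v))$ and then symmetrizes, whereas you bypass that step and work directly with the double integral via the identity $v(\xb)(v(\yb)-v(\xb))=\tfrac12(v(\yb)^2-v(\xb)^2)-\tfrac12(v(\yb)-v(\xb))^2$; your route is slightly more direct but identical in substance.
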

\begin{proof}
By the integration by parts formula for vector field $\nb$, 
\begin{align*}
    &\quad\ (\bm{b}\cdot\mathcal{G}_{\phi}^{\nb} v,v)_{L^2(\Omega)} = (\bm{b}\cdot\mathcal{G}_{\phi}^{\nb} v,v)_{L^2(\R^d)} = -(v, \mathcal{D}_{\phi}^{-\nb}(\bm{b}v))_{L^2(\R^d)}\\
    &=-\int_{\R^d} v(\xb)\int_{\R^d} \frac{\yb-\xb}{|\yb-\xb|}\cdot(\chi_{\nb(\yb)}(\xb-\yb)\bm{b}(\yb)v(\yb)+\\
    &\qquad\qquad\qquad\qquad\qquad\qquad\qquad\qquad \chi_{\nb(\xb)}(\yb-\xb)\bm{b}(\xb)v(\xb))\phi(\yb-\xb)d\yb d\xb\\
    &=\frac{1}{2}\iint_{\R^{2d}} (v(\yb)-v(\xb))\frac{\yb-\xb}{|\yb-\xb|}\cdot(\chi_{\nb(\yb)}(\xb-\yb)\bm{b}(\yb)v(\yb)+\\
    &\qquad\qquad\qquad\qquad\qquad\qquad\qquad\qquad \chi_{\nb(\xb)}(\yb-\xb)\bm{b}(\xb)v(\xb))\phi(\yb-\xb)d\yb d\xb\\
    &=\frac{1}{2}\iint_{\R^{2d}} (v(\yb)-v(\xb))v(\xb)\frac{\yb-\xb}{|\yb-\xb|}\cdot(\chi_{\nb(\yb)}(\xb-\yb)\bm{b}(\yb)+\\
    &\qquad\qquad\qquad\qquad\qquad\qquad\qquad\qquad\chi_{\nb(\xb)}(\yb-\xb)\bm{b}(\xb))\phi(\yb-\xb)d\yb d\xb\\
    &\quad\quad+\frac{1}{2}\int_{\R^d} \int_{\R^d} (v(\yb)-v(\xb))^2\frac{\yb-\xb}{|\yb-\xb|}\cdot\chi_{\nb(\yb)}(\xb-\yb)\bm{b}(\yb)\phi(\yb-\xb)d\yb d\xb\\
    &=-\frac{1}{2}\int_{\R^d}  v(\xb)^2\int_{\R^d}\frac{\yb-\xb}{|\yb-\xb|}\cdot(\chi_{\nb(\yb)}(\xb-\yb)\bm{b}(\yb)+\chi_{\nb(\xb)}(\yb-\xb)\bm{b}(\xb))\phi(\yb-\xb)d\yb d\xb\\
    &\quad\quad+\frac{1}{2}\int_{\R^d} \int_{\R^d} (v(\yb)-v(\xb))^2\frac{\xb-\yb}{|\yb-\xb|}\cdot\chi_{\nb(\xb)}(\yb-\xb)\bm{b}(\xb)\phi(\yb-\xb)d\yb d\xb\\
    &=-\frac{1}{2}\int_\Omega v(\xb)^2\mathcal{D}_{\phi}^{-\nb}\ \bm{b}(\xb) d\xb \\
    &\quad\quad +\frac{1}{2}\int_{\R^d}\int_{\{(\yb-\xb)\cdot\nb(\xb)> 0\}} (v(\yb)-v(\xb))^2\frac{\xb-\yb}{|\yb-\xb|}\cdot\bm{b}(\xb)\phi(\yb-\xb)d\yb d\xb \\
    &\ge -\frac{1}{2}\int_\Omega v(\xb)^2\mathcal{D}_{\phi}^{-\nb}\ \bm{b}(\xb) d\xb.
\end{align*}
where we used \cref{eq:vector_n} in the last inequality.
\end{proof}
Use the above lemma, we can  establish the coercivity of the bilinear form and the well-posedness of \cref{eq:cddeq} further by the Lax-Milgram theorem.

\begin{thm}\label{thm:wellposedwkform}
Assume that \Cref{asmp:bbdd} is satisfied. The nonlocal convection-diffusion problem \eqref{eq:wkformcddeq} is well-posed. More precisely, for any $f\in (\cS_w^{\nub}(\Omega))^\ast$, there exists a unique solution $u\in \cS_w^{\nub}(\Omega)$ such that
\[\|u\|_{\cS_w^{\nub}(\Omega)}\le c\|f\|_{(\cS_w^{\nub}(\Omega))^\ast},\]
where $c=c(\ep, \bb, w,\phi, \nub, \Om)$ is a positive constant.
\end{thm}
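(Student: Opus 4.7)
The plan is to verify the hypotheses of the Lax--Milgram theorem for the bilinear form $b(\cdot,\cdot)$ on the Hilbert space $\cS_w^{\nub}(\Om)$ (recall \Cref{thm:fctspcmplt}), where the full norm and the seminorm $\|\mathfrak{G}^{\nub}_w \cdot\|_{L^2(\R^d;\R^d)}$ are equivalent by the Poincar\'e inequalities \Cref{thm:wintpoincare} and \Cref{thm:wnonintpoincare}. The estimate on $\|u\|_{\cS_w^{\nub}(\Omega)}$ will then follow automatically from Lax--Milgram.

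\emph{Boundedness of $b$.} For $u,v\in\cS_w^{\nub}(\Om)$, the diffusion part is controlled directly by Cauchy--Schwarz:
\[
|(\epsilon \mathfrak{G}^{\nub}_w u, \mathfrak{G}^{\nub}_w v)_{L^2(\R^d;\R^d)}| \le \|\epsilon\|_{L^\infty(\R^d)} \|u\|_{\cS_w^{\nub}(\Om)}\|v\|_{\cS_w^{\nub}(\Om)}.
\]
For the convection part, since $\phi$ is integrable with compact support, \Cref{prop:properties}(1) applied with the varying field $\nb$ (the analogous $L^2$-boundedness proof carries over verbatim since $|\chi_{\nb(\xb)}(\yb-\xb)|\le 1$) gives $\|\mathcal{G}_{\phi}^{\nb} u\|_{L^2(\R^d;\R^d)}\le 2\|\phi\|_{L^1(\R^d)}\|u\|_{L^2(\R^d)}$, so
\[
|(\bm b\cdot \mathcal{G}_{\phi}^{\nb} u, v)_{L^2(\R^d)}|\le 2\|\bm b\|_{L^\infty}\|\phi\|_{L^1}\|u\|_{L^2(\Om)}\|v\|_{L^2(\Om)}\le C\|u\|_{\cS_w^{\nub}(\Om)}\|v\|_{\cS_w^{\nub}(\Om)},
\]
where the Poincar\'e inequality is used in the last step.

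\emph{Coercivity.} This is the step to watch, since the convection part has no fixed sign. Using the lower bound $\epsilon\geq\epsilon_1$ and \Cref{lem:convest},
\[
b(u,u)\ge \epsilon_1\|\mathfrak{G}^{\nub}_w u\|_{L^2(\R^d;\R^d)}^2 -\tfrac12(u^2,\mathcal{D}_\phi^{-\nb}\bm b)_{L^2(\Om)}.
\]
Under \Cref{asmp:bbdd}(i), $\mathcal{D}_\phi^{-\nb}\bm b\le 0$, so the second term is nonnegative and coercivity holds with constant $\epsilon_1$. Under \Cref{asmp:bbdd}(ii), the Poincar\'e inequality gives
\[
-\tfrac12(u^2,\mathcal{D}_\phi^{-\nb}\bm b)_{L^2(\Om)}\ge -\tfrac{\eta}{2}\|u\|_{L^2(\Om)}^2\ge -\tfrac{\eta\Pi^2}{2}\|\mathfrak{G}^{\nub}_w u\|_{L^2(\R^d;\R^d)}^2,
\]
so $b(u,u)\ge (\epsilon_1-\tfrac{\eta\Pi^2}{2})\|\mathfrak{G}^{\nub}_w u\|_{L^2(\R^d;\R^d)}^2$, and the hypothesis $\eta<2\epsilon_1/\Pi^2$ makes the prefactor strictly positive. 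In either case, combining with the equivalence of norm and seminorm yields $b(u,u)\ge c_0\|u\|_{\cS_w^{\nub}(\Om)}^2$ for some $c_0>0$ depending on $\epsilon_1,\bm b,w,\phi,\nub,\Om$.

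\emph{Conclusion.} With $b$ bilinear, bounded, and coercive on the Hilbert space $\cS_w^{\nub}(\Om)$, and $f\in(\cS_w^{\nub}(\Om))^\ast$ a bounded linear functional, the Lax--Milgram theorem produces a unique $u\in\cS_w^{\nub}(\Om)$ satisfying \cref{eq:wkformcddeq}, together with $\|u\|_{\cS_w^{\nub}(\Om)}\le c_0^{-1}\|f\|_{(\cS_w^{\nub}(\Om))^\ast}$. The main conceptual obstacle is the convection bound in \Cref{lem:convest} (already established) and, in case (ii), the need to absorb the lower-order error via Poincar\'e, which is precisely why the smallness assumption $\eta<2\epsilon_1/\Pi^2$ is imposed.
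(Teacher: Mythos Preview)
Your proof is correct and follows essentially the same approach as the paper: verify boundedness via the $L^2$-estimate $\|\mathcal{G}_{\phi}^{\nb} u\|_{L^2}\le 2\|\phi\|_{L^1}\|u\|_{L^2}$ together with Poincar\'e, establish coercivity using \Cref{lem:convest} in each of the two cases of \Cref{asmp:bbdd} (with the same constants $\epsilon_1$ and $\epsilon_1-\tfrac{1}{2}\eta\Pi^2$), and conclude by Lax--Milgram.
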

\begin{proof}
Notice that $b(\cdot,\cdot)$ is coercive under \Cref{asmp:bbdd}. Indeed, if \Cref{asmp:bbdd} (i) is satisfied, then by \Cref{lem:convest}, $b(v,v)\ge \epsilon_1 a(v,v)=\epsilon_1 \|v\|_{\cS_w^{\nub}(\Omega)}^2$, $\forall v\in \cS_w^{\nub}(\Omega)$.
On the other hand, if \Cref{asmp:bbdd} (ii) is satisfied, then by the nonlocal Poincar\'e inequality,
\[
\frac{1}{2}|(v^2, \mathcal{D}_{\phi}^{-\nb}\bb)_{L^2(\Omega)}| \leq \frac{1}{2} \eta \| v\|_{L^2(\Om)}^2 \leq \frac{1}{2} \eta \Pi^2 \| \mathfrak{G}_{w}^{\nub}  v\|_{L^2(\Om;\R^d)}^2 ,
\]
where $\Pi=\Pi(w,\nub,\Om)$ is Poincar\'e constant. Hence, by \Cref{lem:convest}, \[(\bm{b}\cdot\mathcal{G}_{\phi}^{\nb} v,v)_{L^2(\Omega)}\ge -\frac{1}{2}|(v^2, \mathcal{D}_{\phi}^{-\nb}\bb)_{L^2(\Omega)}|\ge -\frac{1}{2} \eta \Pi^2 \| \mathfrak{G}_{w}^{\nub}  v\|_{L^2(\Om;\R^d)}^2.\]  
Therefore
if \Cref{asmp:bbdd} (ii) is satisfied, we have $b(v,v)\ge (\epsilon_1-\frac{1}{2}\eta \Pi^2) \|v\|_{\cS_w^{\nub}(\Omega)}^2$, $\forall v\in \cS_w^{\nub}(\Omega)$.

The boundedness of $b(\cdot,\cdot)$ follows from the nonlocal Poincar\'e inequality and the estimate \[\|\mathcal{G}^{\nb}_\phi v\|_{L^2(\R^d;\R^d)}\le 2\|\phi\|_{L^1(\R^d)}\|v\|_{L^2(\R^d)},\quad\forall v\in L^2(\R^d),\]which is an analog of \Cref{prop:properties} (1) for $\nb$ when $p=2$. 
Finally, the Lax-Milgram theorem yields the well-posedness of \cref{eq:wkformcddeq}.
\end{proof}

\subsection{Nonlocal correspondence models of isotropic linear elasticity} 
For $\ub\in \cS^{\nub}_w(\Om;\R^N)$, define distributional nonlocal vector Laplacian 
\begin{equation}\label{defveclap}
    \mathcal{L}_w^{\nub}  \ub:=\mathfrak{D}_w^{-\nub}\mathfrak{G}_{w}^{\nub} \ub.
\end{equation}
Then $\mathcal{L}_w^{\nub}  \ub\in (\cS^{\nub}_w(\Om;\R^N))^\ast$ and
$\mathcal{L}_w^{\nub}:\cS^{\nub}_w(\Om;\R^N)\to(\cS^{\nub}_w(\Om;\R^N))^\ast$ is a bounded linear operator with operator norm no more than $1$ by \Cref{prop:operators_from_L2}. For the rest of the paper, we consider $N=d$.

For a displacement field $\ub:\R^d\to\R^d$, we study the elastic potential energy given by 
\begin{equation}\label{elastenergy}
    \mathcal{E}  (\ub)=\frac{1}{2}\lambda \|\mathfrak{D}_w^{-\nub}(\ub)\|_{L^2(\R^d)}^2+\mu \|e^{\nub}_w  (\ub)\|_{L^2(\R^d;\R^{d\times d})}^2,
\end{equation}
where $\lambda$ and $\mu$ are Lam\'e coefficients such that $\mu>0$ and $\lambda+2\mu>0$ and $e^{\nub}_w  (\ub)$ is the nonlocal strain tensor 
\begin{equation}
    e^{\nub}_w  (\ub)=\frac{\mathfrak{G}^{\nub}_w \ub+(\mathfrak{G}^{\nub}_w \ub)^T}{2}.
\end{equation}
We also introduce the nonlocal Navi\'er operator $\mathcal{P}^{\nub}_w  $ acting on $\ub$ as 
\begin{equation}\label{defnavierop}
    \mathcal{P}^{\nub}_w   (\ub):=-\mu \mathcal{L}_w^{\nub}  \ub-(\lambda+\mu)\mathfrak{G}_{w}^{\nub} \mathfrak{D}_w^{-\nub} \ub
\end{equation}
in $\Om$.
The goal of this subsection is to show the well-posedness of the following equation 
\begin{equation}\label{elasteqDiribry}
    \left\{
    \begin{aligned}
    \mathcal{P}^{\nub}_w   (\ub)&=\fb\quad \text{in }\ \Om,\\
    \ub&=\bm{0}\quad \text{in }\ \R^d\backslash\Om.
    \end{aligned}
    \right.
\end{equation}
Similarly as \Cref{rem:convdiff_compactkernel}, when the kernel function $w$ is supported on $B_\del(\bm{0})$, we only need the boundary condition to be imposed on $\Om_{2\delta}\backslash\Om$.
The associated function space to the problem is  $$\cS_w^{\nub}(\Om;\R^d)=\{\ub=(u_1, u_2, \cdots, u_d)^T: u_i\in \cS_w^{\nub}(\Om), i=1, 2, \cdots, d\}.$$
The weak formulation of the problem is given by 
\begin{equation}\label{eq:wkform_elasteq}
    \left\{
    \begin{aligned}
         & \text{Find }\ub\in \cS_w^{\nub}(\Omega;\R^d) \text{ such that: } \\
         & B(\ub,\vb)=\langle \fb,\vb\rangle,\quad \forall \vb\in \cS_w^{\nub}(\Omega;\R^d),
    \end{aligned}
    \right.
\end{equation}
where $\fb\in (\cS_w^{\nub}(\Om;\R^d))^\ast$ and the bilinear form $B(\cdot, \cdot):\cS_w^{\nub}(\Om;\R^d)\times\cS_w^{\nub}(\Om;\R^d)\to\R$ is defined as
\[
B(\ub,\vb):=\mu\sum_{j=1}^d(\mathfrak{G}_{w}^{\nub} u_j,\mathfrak{G}_{w}^{\nub} v_j)_{L^2(\R^d;\R^d)}+(\lambda+\mu)(\mathfrak{D}_w^{-\nub}\ub,\mathfrak{D}_w^{-\nub}\vb)_{L^2(\R^d)}.
\]

To make sense of the weak formulation, one need to show that $\mathfrak{D}_w^{-\nub} \ub\in L^2(\R^d)$ for $\ub\in \cS_w^{\nub}(\Omega;\R^d)$. This is proved in \Cref{prop:divisL2}.

The following lemma verifies that $\mathcal{E} $ is indeed the energy for the problem (\ref{eq:wkform_elasteq}).
\begin{lem}\label{lem:equivelasenergy}
For $\ub\in \cS_w^{\nub}(\Omega;\R^d)$, 
\begin{equation}
    B(\ub,\ub)=2\mathcal{E}  (\ub).
\end{equation}
\end{lem}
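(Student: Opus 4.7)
The plan is to reduce the identity to a single nontrivial equality between $\|\mathfrak{D}_w^{-\nub}\ub\|_{L^2(\R^d)}^2$ and a certain cross inner product of partial nonlocal gradients, which is essentially the Fourier computation already carried out inside the proof of \Cref{prop:divisL2}. First I will simply expand $B(\ub,\ub)$ and $2\mathcal{E}(\ub)$ and observe that $\sum_{j=1}^d \|\mathfrak{G}_w^{\nub} u_j\|_{L^2(\R^d;\R^d)}^2 = \|\mathfrak{G}_w^{\nub}\ub\|_{L^2(\R^d;\R^{d\times d})}^2$. Writing $(\mathfrak{G}_w^{\nub}\ub)_{ij} =: \mathcal{G}_i u_j$ componentwise, the Frobenius inner product gives
\begin{equation*}
2\|e_w^{\nub}(\ub)\|_{L^2(\R^d;\R^{d\times d})}^2 \;=\; \|\mathfrak{G}_w^{\nub}\ub\|_{L^2(\R^d;\R^{d\times d})}^2 \;+\; \sum_{i,j=1}^d (\mathcal{G}_i u_j, \mathcal{G}_j u_i)_{L^2(\R^d)},
\end{equation*}
since $\|A\|_F = \|A^T\|_F$. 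Subtracting $B(\ub,\ub)$ from $2\mathcal{E}(\ub)$ then reduces everything to verifying
\begin{equation*}
\|\mathfrak{D}_w^{-\nub}\ub\|_{L^2(\R^d)}^2 \;=\; \sum_{i,j=1}^d (\mathcal{G}_i u_j, \mathcal{G}_j u_i)_{L^2(\R^d)}.
\end{equation*}

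Next, I will establish this last identity. For $\ub \in C_c^\infty(\Omega;\R^d)$ it is exactly equation \eqref{eq:divL2normeqgradip}, which was proved via the Fourier symbol $\bm\lambda_w^{\nub}$ together with Plancherel's theorem inside the proof of \Cref{prop:divisL2}. To extend it to arbitrary $\ub\in\cS_w^{\nub}(\Omega;\R^d)$, I will invoke the density result \Cref{thm:density}: pick $\{\ub^{(n)}\}\subset C_c^\infty(\Omega;\R^d)$ with $\ub^{(n)}\to\ub$ in $\cS_w^{\nub}(\Omega;\R^d)$. Then $\mathcal{G}_i u_j^{(n)} \to (\mathfrak{G}_w^{\nub}\ub)_{ij}$ in $L^2(\R^d)$ for every $i,j$ (a direct consequence of $L^2$-convergence of $\mathfrak{G}_w^{\nub}\ub^{(n)}$), and by \Cref{prop:divisL2} we also have $\mathcal{D}_w^{-\nub}\ub^{(n)}\to \mathfrak{D}_w^{-\nub}\ub$ in $L^2(\R^d)$. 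Passing to the limit in the identity for $\ub^{(n)}$ yields the identity for $\ub$.

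Combining these two steps gives $B(\ub,\ub)=2\mathcal{E}(\ub)$. The only place where any real work is hidden is the cross-term identity above, and that work was essentially already done (via the Fourier symbols) in Proposition \ref{prop:divisL2}; the rest is bookkeeping. Consequently I do not anticipate a serious obstacle — the main care needed is simply making sure the limiting procedure is applied to both sides simultaneously, which is guaranteed by \Cref{thm:density} and \Cref{prop:divisL2}.
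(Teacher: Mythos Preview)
Your proposal is correct and follows essentially the same approach as the paper's proof: both reduce the identity to the cross-term equality $\|\mathfrak{D}_w^{-\nub}\ub\|_{L^2}^2 = \sum_{i,j}(\mathcal{G}_i u_j,\mathcal{G}_j u_i)_{L^2}$, invoke \eqref{eq:divL2normeqgradip} for smooth functions, and pass to the limit via \Cref{thm:density} and \Cref{prop:divisL2}. The only cosmetic difference is that the paper approximates first and carries out the Frobenius expansion on $\ub^{(n)}$, whereas you perform the Frobenius algebra directly on $\mathfrak{G}_w^{\nub}\ub$ and use density only for the cross-term identity; both orderings are valid.
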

\begin{proof}
Note that $B(\ub,\ub)=\mu\|\mathfrak{G}^{\nub}_w \ub\|_{L^2(\R^d;\R^{d\times d})}^2+(\lambda+\mu)\|\mathfrak{D}^{-\nub}_w  \ub\|_{L^2(\R^d)}^2$. By \Cref{prop:divisL2}, there exists $\{\ub^{(n)}\}_{n=1}^\infty\subset C^\infty_c(\Om;\R^d)$ such that $\mathcal{G}^{\nub}_w \ub^{(n)}\to \mathfrak{G}^{\nub}_w \ub$ in $L^2(\R^d;\R^{d\times d})$ and $\mathcal{D}_w^{-\nub} \ub^{(n)}\to \mathfrak{D}_w^{-\nub} \ub$ in $L^2(\R^d)$ as $n\to\infty$. Therefore, it suffices to show $B(\ub^{(n)},\ub^{(n)})=2\mathcal{E}  (\ub^{(n)})$ and let $n$ tend to infinity. Using the notation $\mathcal{G}_i$ and  \cref{eq:divL2normeqgradip} in the proof of \Cref{prop:divisL2}, we obtain
\begin{align*}
    &2\|e^{\nub}_w  (\ub^{(n)})\|_{L^2(\R^d;\R^{d\times d})}^2=2\sum_{i,j=1}^d\int_{\R^d}\left(\frac{1}{2}\left(\mathcal{G}_iu^{(n)}_j(\xb)+\mathcal{G}_ju^{(n)}_i(\xb)\right)\right)^2d\xb\\
    =&\|\mathcal{G}^{\nub}_w \ub^{(n)}\|_{L^2(\R^d;\R^{d\times d})}^2+\sum_{i,j=1}^d\int_{\R^d}\mathcal{G}_iu^{(n)}_j(\xb)\mathcal{G}_ju^{(n)}_i(\xb)d\xb\\
    =&\|\mathcal{G}^{\nub}_w \ub^{(n)}\|_{L^2(\R^d;\R^{d\times d})}^2+\|\mathcal{D}^{-\nub}_w \ub^{(n)}\|_{L^2(\R^d)}^2.
\end{align*}
Thus,
\[
\begin{split}
2\mathcal{E}  \left(\ub^{(n)}\right)&=\lambda \|\mathcal{D}^{-\nub}_w \ub^{(n)}\|_{L^2(\R^d)}^2+\mu\left(\|\mathcal{G}^{\nub}_w \ub^{(n)}\|_{L^2(\R^d;\R^{d\times d})}^2+\|\mathcal{D}^{-\nub}_w \ub^{(n)}\|_{L^2(\R^d)}^2\right)\\
&=B\left(\ub^{(n)},\ub^{(n)}\right).
\end{split}
\]Letting $n\to\infty$ finishes the proof.
\end{proof}

Now we are ready to establish the well-posedness of problem (\ref{eq:wkform_elasteq}). In fact, an analogue of the classical Korn's inequality holds in the nonlocal setting.
\begin{lem}[Nonlocal Korn's inequality]\label{lem:Kornieq}
There exists a constant $C=\frac{1}{2}\min(\lambda+2\mu,\mu)>0$ such that 
\begin{equation}
    \mathcal{E} (\ub)\ge C\|\mathfrak{G}^{\nub}_w \ub\|_{L^2(\R^d;\R^{d\times d})}^2,\quad\forall \ub\in \cS_w^{\nub} (\Om;\R^d).
\end{equation}
\end{lem}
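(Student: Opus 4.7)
The plan is to reduce the inequality to a simple case analysis on the sign of $\lambda+\mu$, using two facts already established: Lemma 5.1 rewriting the energy in terms of $\|\mathfrak{G}^{\nub}_w\ub\|^2$ and $\|\mathfrak{D}^{-\nub}_w\ub\|^2$, and Proposition 3.1, which gives the pointwise-in-space contraction $\|\mathfrak{D}^{-\nub}_w\ub\|_{L^2(\R^d)}\le\|\mathfrak{G}^{\nub}_w\ub\|_{L^2(\R^d;\R^{d\times d})}$. With these, the inequality to prove is purely algebraic.

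Setting $2C=\min(\lambda+2\mu,\mu)$, the inequality is equivalent to
\[
\mu\|\mathfrak{G}^{\nub}_w\ub\|_{L^2}^2+(\lambda+\mu)\|\mathfrak{D}^{-\nub}_w\ub\|_{L^2}^2\ \ge\ \min(\lambda+2\mu,\mu)\,\|\mathfrak{G}^{\nub}_w\ub\|_{L^2}^2,
\]
after invoking Lemma 5.1 on the left-hand side. I would then split into two cases depending on the sign of $\lambda+\mu$.

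In the case $\lambda+\mu\ge 0$, the term $(\lambda+\mu)\|\mathfrak{D}^{-\nub}_w\ub\|_{L^2}^2$ is nonnegative and may be dropped, leaving $\mu\|\mathfrak{G}^{\nub}_w\ub\|_{L^2}^2$, which trivially dominates $\min(\lambda+2\mu,\mu)\|\mathfrak{G}^{\nub}_w\ub\|_{L^2}^2$. In the case $\lambda+\mu<0$, I use Proposition 3.1 to bound $\|\mathfrak{D}^{-\nub}_w\ub\|_{L^2}^2\le\|\mathfrak{G}^{\nub}_w\ub\|_{L^2}^2$; multiplying by the negative number $\lambda+\mu$ reverses the inequality, yielding
\[
\mu\|\mathfrak{G}^{\nub}_w\ub\|_{L^2}^2+(\lambda+\mu)\|\mathfrak{D}^{-\nub}_w\ub\|_{L^2}^2\ \ge\ (\lambda+2\mu)\,\|\mathfrak{G}^{\nub}_w\ub\|_{L^2}^2,
\]
and since $\lambda+\mu<0$ forces $\lambda+2\mu<\mu$, we have $\min(\lambda+2\mu,\mu)=\lambda+2\mu$, which matches the bound.

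There is essentially no obstacle here beyond correctly tracking the sign of $\lambda+\mu$ and invoking the $\mathfrak{D}$-versus-$\mathfrak{G}$ contraction in the right direction. The admissibility constraints $\mu>0$ and $\lambda+2\mu>0$ guarantee that the resulting coefficient $\min(\lambda+2\mu,\mu)$ is strictly positive, so the coercivity constant $C=\tfrac{1}{2}\min(\lambda+2\mu,\mu)>0$ is genuine. Combined with the nonlocal Poincar\'e inequality of Sections 4--5, this immediately yields coercivity of the bilinear form $B(\cdot,\cdot)$ on $\cS_w^{\nub}(\Om;\R^d)$ and hence, via Lax--Milgram, the well-posedness of \eqref{eq:wkform_elasteq}.
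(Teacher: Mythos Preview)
Your proof is correct and follows essentially the same algebraic idea as the paper's: both reduce to showing $\mu G+(\lambda+\mu)D\ge\min(\lambda+2\mu,\mu)\,G$ given $0\le D\le G$, splitting on the sign of $\lambda+\mu$. The only difference is packaging: the paper passes to smooth functions and redoes the Fourier computation pointwise in $\xib$, whereas you invoke Lemma~\ref{lem:equivelasenergy} and Proposition~\ref{prop:divisL2} directly on $\cS_w^{\nub}(\Om;\R^d)$, which is slightly more economical since those results already absorb the density and Plancherel arguments.
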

\begin{proof}
By \Cref{lem:equivelasenergy} and \Cref{thm:density}, it suffices to show \[B(\ub,\ub)\ge \min(\lambda+2\mu,\mu)\|\mathcal{G}^{\nub}_w \ub\|_{L^2(\R^d;\R^{d\times d})}^2,\quad\forall \ub\in C^\infty_c(\Om;\R^d).\]
Using the notations and Plancherel's theorem as in the proof of \Cref{prop:divisL2} yields
\begin{align*}
    &B(\ub,\ub)=\mu\|\mathcal{G}^{\nub}_w \ub\|_{L^2(\R^d;\R^{d\times d})}^2+(\lambda+\mu)\|\mathcal{D}^{-\nub}_w \ub\|_{L^2(\R^d)}^2\\
    =&\mu\int_{\R^d}\left|\bm \lambda_w^{\nub}(\xib)\right|^2\left|\hat{\ub}(\xib)\right|^2d\xib+(\lambda+\mu)\int_{\R^d}\left|\bm\lambda_w^{-\nub}(\xib)^T\hat{\ub}(\xib)\right|^2 d\xib\\
    \ge &\min(\lambda+2\mu,\mu)\int_{\R^d}\left|\bm \lambda_w^{\nub}(\xib)\right|^2\left|\hat{\ub}(\xib)\right|^2d\xib=\min(\lambda+2\mu,\mu)\|\mathcal{G}^{\nub}_w \ub\|_{L^2(\R^d;\R^{d\times d})}^2.
\end{align*}
\end{proof}

\begin{thm}
The nonlocal linear elasticity problem (\ref{eq:wkform_elasteq}) is well-posed. More precisely, for any $\fb\in (\cS_w^{\nub}(\Om;\R^d))^\ast$, there exists a unique solution $\ub\in \cS_w^{\nub}(\Om;\R^d)$ such that 
\[\|\ub\|_{\cS_w^{\nub}(\Om;\R^d)}\le c\|\fb\|_{(\cS_w^{\nub}(\Om;\R^d))^\ast},\]where $c=\min(\lambda+2\mu,\mu)^{-1}$ is a positive constant.
\end{thm}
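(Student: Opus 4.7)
The plan is to verify the hypotheses of the Lax–Milgram theorem for the bilinear form $B(\cdot,\cdot)$ on $\cS_w^{\nub}(\Om;\R^d)$. First, since $\cS_w^{\nub}(\Om)$ is a Hilbert space by \Cref{thm:fctspcmplt}, the product space $\cS_w^{\nub}(\Om;\R^d)$ inherits the Hilbert structure componentwise. By the Poincar\'e inequality (\Cref{thm:wintpoincare} and \Cref{thm:wnonintpoincare}), the seminorm $\|\mathfrak{G}^{\nub}_w\ub\|_{L^2(\R^d;\R^{d\times d})}$ is equivalent to the full norm on $\cS_w^{\nub}(\Om;\R^d)$, so we may work with the seminorm throughout.

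Next I would establish boundedness of $B$. By the Cauchy–Schwarz inequality together with \Cref{prop:divisL2} (which gives $\|\mathfrak{D}^{-\nub}_w\ub\|_{L^2(\R^d)}\le \|\mathfrak{G}^{\nub}_w\ub\|_{L^2(\R^d;\R^{d\times d})}$), one obtains
\[
|B(\ub,\vb)|\le \mu\,\|\mathfrak{G}^{\nub}_w\ub\|_{L^2}\|\mathfrak{G}^{\nub}_w\vb\|_{L^2}+(\lambda+\mu)\,\|\mathfrak{G}^{\nub}_w\ub\|_{L^2}\|\mathfrak{G}^{\nub}_w\vb\|_{L^2},
\]
which is bounded by a constant times $\|\ub\|_{\cS_w^{\nub}(\Om;\R^d)}\|\vb\|_{\cS_w^{\nub}(\Om;\R^d)}$.

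For coercivity, I would invoke \Cref{lem:equivelasenergy} and the nonlocal Korn inequality in \Cref{lem:Kornieq} to conclude that
\[
B(\ub,\ub)=2\mathcal{E}(\ub)\ge \min(\lambda+2\mu,\mu)\,\|\mathfrak{G}^{\nub}_w\ub\|_{L^2(\R^d;\R^{d\times d})}^2,
\]
which is precisely the coercivity of $B$ with respect to the (equivalent) seminorm on $\cS_w^{\nub}(\Om;\R^d)$. Since $\fb\in (\cS_w^{\nub}(\Om;\R^d))^\ast$ is by assumption a bounded linear functional, the Lax–Milgram theorem yields a unique solution $\ub\in \cS_w^{\nub}(\Om;\R^d)$ to \cref{eq:wkform_elasteq}, with the a priori estimate
\[
\|\ub\|_{\cS_w^{\nub}(\Om;\R^d)}\le \min(\lambda+2\mu,\mu)^{-1}\|\fb\|_{(\cS_w^{\nub}(\Om;\R^d))^\ast}.
\]

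There is no substantive obstacle here, as the heavy lifting has already been carried out in the preceding results: the functional-analytic framework in \Cref{sec:space}, the Poincar\'e inequality in \Cref{sec:Poincare_int_kernel} and \Cref{sec:Poincare_nonint_kernel}, and the nonlocal Korn inequality in \Cref{lem:Kornieq}. The only mildly delicate point is making sure to cite \Cref{prop:divisL2} when bounding $\|\mathfrak{D}^{-\nub}_w\ub\|_{L^2}$ by the energy seminorm, since the divergence term in $B$ involves a distributional operator that is not a priori $L^2$ for $\ub\in \cS_w^{\nub}(\Om;\R^d)$.
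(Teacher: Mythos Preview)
Your proposal is correct and follows essentially the same approach as the paper: coercivity via \Cref{lem:equivelasenergy} and \Cref{lem:Kornieq}, boundedness via \Cref{prop:divisL2} (and implicitly \Cref{thm:density}), and then Lax--Milgram. The only cosmetic slip is that in your boundedness estimate the coefficient should be $|\lambda+\mu|$ rather than $(\lambda+\mu)$, since the hypotheses $\mu>0$, $\lambda+2\mu>0$ do not force $\lambda+\mu\ge 0$.
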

\begin{proof}
The bilinear form $B(\cdot,\cdot)$ is coercive by \Cref{lem:equivelasenergy} and \Cref{lem:Kornieq}, and is bounded by \Cref{prop:divisL2} and \Cref{thm:density}. Applying the Lax-Milgram theorem yields the result. 
\end{proof}

\subsection{Nonlocal Helmholtz decomposition} 
\label{subsec:helmhotlz}
In this subsection, we always assume that $d=2$ or $d=3$. The nonlocal vector calculus identities in \Cref{subsec:vector_identities_general} will be used to obtain the nonlocal Helmholtz decomposition for $d=2$ and $d=3$.
These results extend similar studies in \cite{lee2020nonlocal} for periodic functions.

\begin{thm}\label{thm:helm2d}
Let $\ub\in (\cS_w^{\nub}(\Om;\R^2))^\ast$. There exist scalar potentials $p^{\nub},\, q^{\nub}\in L^2(\R^2)$ such that 
\[\ub=\mathfrak{G}_{w}^{\nub} p^{\nub}+\begin{pmatrix}
    0 & 1\\
    -1 & 0
    \end{pmatrix}\mathfrak{G}_{w}^{-\nub} q^{\nub} \in (\cS_w^{\nub}(\Om;\R^2))^\ast. \]
    In addition, there exists a constant $C$ depending on the Poincar\'e constant $\Pi$ such that 
    \[\|p^{\nub}\|_{L^2(\R^2)}+\|q^{\nub}\|_{L^2(\R^2)}\le C\|\ub\|_{(\cS_w^{\nub}(\Om;\R^2))^\ast}.\]
\end{thm}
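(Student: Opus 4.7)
The plan is to recast the decomposition as the surjectivity of the adjoint of a bounded-below operator and invoke the closed range theorem. Put $J := \begin{pmatrix} 0 & 1 \\ -1 & 0 \end{pmatrix}$ and define
\[
T\colon \cS_w^{\nub}(\Om;\R^2) \to L^2(\R^2) \oplus L^2(\R^2), \qquad T\vb := \bigl(-\mathfrak{D}_w^{-\nub}\vb,\; \mathfrak{D}_w^{\nub}(J\vb)\bigr).
\]
Since $J\vb$ again lies in $\cS_w^{\nub}(\Om;\R^2)$ componentwise, \Cref{prop:divisL2} makes $T$ a bounded linear map between Hilbert spaces. Unwinding the definition of $\mathfrak{G}_w^{\pm\nub}$ on $L^2(\R^2)$ given by \Cref{prop:operators_from_L2} and using $J^T = -J$, a short calculation identifies its adjoint as
\[
T^*\colon L^2(\R^2) \oplus L^2(\R^2) \to (\cS_w^{\nub}(\Om;\R^2))^\ast, \qquad T^*(p,q) = \mathfrak{G}_w^{\nub} p + J\mathfrak{G}_w^{-\nub} q.
\]
The theorem is therefore equivalent to the surjectivity of $T^*$ together with a quantitative bound on preimages.

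The core of the proof is the identity
\[
\|\mathfrak{D}_w^{-\nub}\vb\|_{L^2(\R^2)}^2 + \|\mathfrak{D}_w^{\nub}(J\vb)\|_{L^2(\R^2)}^2 = \|\mathfrak{G}_w^{\nub}\vb\|_{L^2(\R^2;\R^{2\times 2})}^2, \qquad \forall\, \vb \in \cS_w^{\nub}(\Om;\R^2).
\]
For $\vb \in C_c^\infty(\Om;\R^2)$ this is a direct Fourier computation: by \Cref{lem:FT} and Plancherel, writing $\bm\lambda := \bm\lambda_w^{\nub}$ and using $\bm\lambda_w^{-\nub} = -\overline{\bm\lambda}$, the left-hand side becomes $\int_{\R^2} \bigl(|\overline{\bm\lambda}^T \hat\vb|^2 + |\bm\lambda^T (J\hat\vb)|^2\bigr) d\xib$; in two dimensions the cross terms cancel and the integrand collapses to $|\bm\lambda|^2 |\hat\vb|^2 = |\mathcal{F}(\mathcal{G}_w^{\nub}\vb)|^2$. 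The identity then propagates to all $\vb \in \cS_w^{\nub}(\Om;\R^2)$ via the density of $C_c^\infty(\Om;\R^2)$ from \Cref{thm:density} combined with the continuity of $\mathfrak{G}_w^{\nub}$, $\mathfrak{D}_w^{-\nub}$ and $\mathfrak{D}_w^{\nub}\circ J$ established in \Cref{prop:divisL2}.

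Combined with the nonlocal Poincar\'e inequality (\Cref{thm:wintpoincare} for integrable compactly supported kernels, \Cref{thm:wnonintpoincare} in general), this isometry yields $\|T\vb\|_{L^2 \oplus L^2} \geq c\|\vb\|_{\cS_w^{\nub}(\Om;\R^2)}$ for some $c = c(\Pi) > 0$, so $T$ is injective with closed range. The closed range theorem then gives surjectivity of $T^*$: for any $\ub \in (\cS_w^{\nub}(\Om;\R^2))^\ast$ there exist $p^{\nub}, q^{\nub} \in L^2(\R^2)$ with $T^*(p^{\nub}, q^{\nub}) = \ub$. To obtain the quantitative bound one may either select the minimal-norm preimage $(p^{\nub}, q^{\nub}) \in (\ker T^*)^\perp$ or equivalently solve the auxiliary coercive problem $T^*T \vb_0 = \ub$ in $\cS_w^{\nub}(\Om;\R^2)$ by Lax--Milgram and set $(p^{\nub}, q^{\nub}) := T\vb_0$; in either case $\|p^{\nub}\|_{L^2} + \|q^{\nub}\|_{L^2} \leq C\|\ub\|_{(\cS_w^{\nub}(\Om;\R^2))^\ast}$ for a constant $C$ depending only on $\Pi$.

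The main technical obstacle is not the algebraic Fourier identity itself but its extension from $C_c^\infty$ to the whole space $\cS_w^{\nub}(\Om;\R^2)$; this rests on the density theorem \Cref{thm:density} together with the nontrivial $L^2$-mapping property $\mathfrak{D}_w^{\pm\nub}\vb \in L^2(\R^d)$ for $\vb \in \cS_w^{\nub}(\Om;\R^d)$ from \Cref{prop:divisL2}, both foundational results established earlier in the paper.
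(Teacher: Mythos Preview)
Your proof is correct. The paper takes what you describe as your ``alternative'' route: it solves the nonlocal Poisson problem $-\mathcal{L}_w^{\nub}\fb = -\mathfrak{D}_w^{-\nub}\mathfrak{G}_w^{\nub}\fb = \ub$ directly via Lax--Milgram (invoking \Cref{thm:wellposedwkform} with $\epsilon=1$, $\bb=\bm 0$), then sets $p^{\nub}:=-\mathfrak{D}_w^{-\nub}\fb$ and $q^{\nub}:=\mathfrak{D}_w^{\nub}(J\fb)$ and checks the decomposition via the vector identity \Cref{lem:veccalid1_generalfct}. In your language this is exactly $(p^{\nub},q^{\nub})=T\fb$ with $\fb$ solving $T^*T\fb=\ub$; your isometry identity is nothing other than \eqref{eq:veccalid1_wk} with $\ub=\vb$, so $T^*T=-\mathcal{L}_w^{\nub}$ on the nose. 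Your closed-range packaging is a clean abstract reformulation that makes the role of the Poincar\'e constant in the estimate more transparent, while the paper's constructive version has the advantage of exhibiting the potentials explicitly as nonlocal divergences of the Poisson solution.
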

\begin{proof}
Applying \Cref{thm:wellposedwkform} with $\ep=1$ and $\bb=\bm{0}$ componentwise, it follows that there exists a unique function $\fb \in \cS_w^{\nub}(\Om;\R^2)$ such that  $-\mathcal{L}_w^{\nub}  \fb =\ub$ with
\[
    \|\fb \|_{\cS_w^{\nub}(\Om;\R^2)}\le c\|\ub\|_{(\cS_w^{\nub}(\Om;\R^2))^\ast},
\]
where $c=c(w,\nub,\Om)>0$. Let \[p^{\nub}=-\mathfrak{D}_w^{-\nub} \fb \ \text{ and }\ q^{\nub}=\mathfrak{D}_w^{\nub}\left[\begin{pmatrix}
    0 & 1\\
    -1 & 0
    \end{pmatrix}\fb \right].\]
By \Cref{prop:divisL2}, we have $p^{\nub},q^{\nub} \in L^2(\R^2)$, and
$\|p^{\nub}\|_{L^2(\R^2)}+\|q^{\nub}\|_{L^2(\R^2)}\le \tilde{C}\|\fb \|_{\cS_w^{\nub}(\Om;\R^2)}\leq C \|\ub\|_{(\cS_w^{\nub}(\Om;\R^2))^\ast}$.
Then by \Cref{lem:veccalid1} we obtain
    \begin{align*}
        \ub&=-\mathcal{L}_w^{\nub}  \fb=\mathfrak{G}_{w}^{\nub} p^{\nub}+\begin{pmatrix}
    0 & 1\\
    -1 & 0
    \end{pmatrix}\mathfrak{G}_{w}^{-\nub} q^{\nub}. 
    \end{align*}
    This finishes the proof.
\end{proof}

\begin{thm}\label{thm:helm3d}
Let $\ub\in (\cS_w^{\nub}(\Om;\R^3))^\ast$. There exist a scalar potential $p^{\nub}\in L^2(\R^3)$ and a vector potential $\vb^{\nub}\in L^2(\R^3;\R^3)$ such that
\begin{equation}\label{helm3d}
    \ub=\mathfrak{G}_{w}^{\nub} p^{\nub}+\mathfrak{C}_{w}^{-\nub} \vb^{\nub},
\end{equation}
with \begin{equation}\label{helm3ddivvanish}
    \mathfrak{D}_w^{\nub}\vb^{\nub}=0,
\end{equation}
where the above equations are understood in $(\cS_w^{\nub}(\Om;\R^3))^\ast$ and $(\cS_w^{-\nub}(\Om))^\ast$, respectively.
In addition, there exists a constant $C$ depending on the Poincar\'e constant $\Pi$ such that 
    \[\|p^{\nub}\|_{L^2(\R^3)}+\|\vb^{\nub}\|_{L^2(\R^3;\R^3)}\le C\|\ub\|_{(\cS_w^{\nub}(\Om;\R^3))^\ast}.\]
\end{thm}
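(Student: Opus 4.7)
The plan is to mirror the strategy used in \Cref{thm:helm2d}, replacing the two-dimensional vector identity \Cref{lem:veccalid1_generalfct} by its three-dimensional analogue \Cref{lem:veccalid2_generalfct}. First, I would solve a nonlocal Poisson-type problem componentwise: for $\ub\in (\cS_w^{\nub}(\Om;\R^3))^\ast$, apply \Cref{thm:wellposedwkform} with $\epsilon\equiv 1$ and $\bb\equiv\bm 0$ to each component to obtain a unique $\fb\in \cS_w^{\nub}(\Om;\R^3)$ such that $-\mathcal{L}_w^{\nub}\fb=\ub$ in $(\cS_w^{\nub}(\Om;\R^3))^\ast$, together with the estimate $\|\fb\|_{\cS_w^{\nub}(\Om;\R^3)}\le c\|\ub\|_{(\cS_w^{\nub}(\Om;\R^3))^\ast}$.

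Next, I would define the potentials
\[
p^{\nub}:=-\mathfrak{D}_w^{-\nub}\fb,\qquad \vb^{\nub}:=\mathfrak{C}_w^{\nub}\fb.
\]
By \Cref{prop:divisL2} and \Cref{prop:curlisL2}, both are bona fide $L^2$ objects, with $\|p^{\nub}\|_{L^2(\R^3)}+\|\vb^{\nub}\|_{L^2(\R^3;\R^3)}\le 2\|\fb\|_{\cS_w^{\nub}(\Om;\R^3)}$, which chained with the estimate above yields the desired norm control. The vector identity \Cref{lem:veccalid2_generalfct} then gives, in $(\cS_w^{\nub}(\Om;\R^3))^\ast$,
\[
\ub=-\mathcal{L}_w^{\nub}\fb=-\mathfrak{G}_w^{\nub}\mathfrak{D}_w^{-\nub}\fb+\mathfrak{C}_w^{-\nub}\mathfrak{C}_w^{\nub}\fb=\mathfrak{G}_w^{\nub}p^{\nub}+\mathfrak{C}_w^{-\nub}\vb^{\nub},
\]
which is \cref{helm3d}. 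Finally, \cref{helm3ddivvanish} is immediate from \Cref{lem:vanishidentity_generalfct}: since $\fb\in \cS_w^{\nub}(\Om;\R^3)$,
\[
\mathfrak{D}_w^{\nub}\vb^{\nub}=\mathfrak{D}_w^{\nub}\mathfrak{C}_w^{\nub}\fb=0\quad\text{in }(\cS_w^{-\nub}(\Om))^\ast,
\]
the action being well defined through \Cref{prop:operators_from_L2} since $\vb^{\nub}\in L^2(\R^3;\R^3)$.

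The only genuine subtleties to keep track of are bookkeeping ones: making sure the identity from \Cref{lem:veccalid2_generalfct} is interpreted in $(\cS_w^{\nub}(\Om;\R^3))^\ast$ (which it is, by construction in that lemma), and that the divergence-free identity for $\vb^{\nub}$ lives in the dual of the oppositely oriented space $\cS_w^{-\nub}(\Om)$, consistent with the mapping properties in \Cref{prop:operators_from_L2}. No new analytic input beyond the Poincar\'e inequality, the Poisson solvability result, and the vanishing/vector identities already established should be needed; the main structural point is that the auxiliary potential $\fb$ produced by solving the nonlocal vector Laplace equation simultaneously serves as the primitive whose divergence generates $p^{\nub}$ and whose curl generates $\vb^{\nub}$.
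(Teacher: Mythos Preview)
Your proposal is correct and follows essentially the same approach as the paper: solve $-\mathcal{L}_w^{\nub}\fb=\ub$ via \Cref{thm:wellposedwkform}, set $p^{\nub}=-\mathfrak{D}_w^{-\nub}\fb$ and $\vb^{\nub}=\mathfrak{C}_w^{\nub}\fb$, invoke \Cref{lem:veccalid2_generalfct} for the decomposition and \Cref{lem:vanishidentity_generalfct} for the divergence-free condition, and obtain the estimate from \Cref{prop:divisL2}, \Cref{prop:curlisL2}, and the a priori bound on $\fb$.
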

\begin{proof}
As in the proof of Theorem \ref{thm:helm2d}, there exists $\fb \in \cS_w^{\nub}(\Om;\R^3)$ such that $-\mathcal{L}_w^{\nub}  \fb =\ub$ and
\beq\label{apriori1}
    \|\fb \|_{\cS_w^{\nub}(\Om;\R^3)}\le c\|\ub\|_{(\cS_w^{\nub}(\Om;\R^3))^\ast}.
\eeq
 We choose \[p^{\nub}=-\mathfrak{D}_w^{-\nub} \fb \ \text{ and }\ \vb^{\nub}=\mathfrak{C}_{w}^{\nub}\fb,\] and use \cref{veccalid2} to derive \cref{helm3d}. The computation is staightforward and thus omitted. By \Cref{lem:vanishidentity_generalfct}, \cref{helm3ddivvanish} holds.
Similar to the proof of Theorem \ref{thm:helm2d}, the final estimate follows from \Cref{prop:divisL2}, \Cref{prop:curlisL2} and \cref{apriori1}.
\end{proof}

\begin{rem}
If the kernel function $w$ is integrable, then by \Cref{prop:properties}, for any $u\in L^2(\R^d)$ and $\vb\in L^2(\R^d;\R^d)$, we have $\mathfrak{G}_w^{\nub} u = \mathcal{G}_w^{\nub} u \in L^2(\R^d;\R^d)$ and $ \mathfrak{C}_w^{\nub} \vb = \mathcal{C}_w^{\nub} \vb\in L^2(\R^d;\R^d)$. Therefore, the two components in the Helmholtz decomposition in \Cref{thm:helm2d} or \Cref{thm:helm3d} are orthogonal in $L^2(\R^d;\R^d)$ as a result of integration by parts together with \Cref{rem:2dvanishidentity} ($d=2$) or \Cref{lem:vanishidentity_generalfct} ($d=3$).
\end{rem}

\begin{rem}
    If the kernel function $w$ has compact support, then the potentials in \Cref{thm:helm2d} and \Cref{thm:helm3d} vanish outside a compact set. More specifically, if $\supp\ w \subset B_\del(\bm{0})$ for $\del>0$, then the  $p^{\nub}, q^{\nub} \in L^2(\Om_\del)$ and $\vb^{\nub}\in L^2(\Om_\del;\R^3)$ where  $\Om_\del=\{\xb\in \R^d:\dist(\xb,\Om)<\del \}$. 
\end{rem}

\section{Conclusion}\label{sec:conclusion}
In this paper, we have studied nonlocal half-ball gradient, divergence and curl operators with a rather general class of kernels. These nonlocal operators can be generalized to distributional operators upon which a Sobolev-type space is defined. For this function space, the set of smooth functions with compact support is proved to be dense. Moreover, a nonlocal Poincar\'e inequality on bounded domains is established, 
which is crucial to study the well-posedness of nonlocal Dirichlet boundary value problems such as nonlocal convection-diffusion and nonlocal correspondence model of linear elasticity and to prove a nonlocal Helmholtz decomposition. 

This work provides a rigorous mathematical analysis on the stability of some linear nonlocal problems with homogeneous Dirichlet boundary, thus generalizes the analytical results in \cite{lee2020nonlocal} where the domains are periodic cells. While we mainly focused on the analysis of these nonlocal problems, 
standard Galerkin approximations to these problems are also natural based on the Poincar\'e inequality and the density result. 
It would also be interesting to investigate Petrov-Galerkin methods for the nonlocal convection-diffusion problems \cite{leng2022petrov}, as well as mixed-type methods for them \cite{DFHT21}. 
Other problems such as nonlocal elasticity models in heterogeneous media and the Stokes system in \cite{DuTi20,lee2020nonlocal} may also be studied in the future. As for the analysis, our approach relies heavily on Fourier analysis which is powerful but limited to $L^2$ formulation. The nonlocal $L^p$ Poincar\'e inequality for half-ball gradient operator on bounded domains is still open to investigation. In addition, Poincar\'e inequality for Neumann type boundary is also interesting to be explored in the future. 
We note that in this work the dependence of the Poincar\'e constant on the kernel function is implicit as a result of argument by contradiction. Further investigation on how the constant depends on the kernel function is needed, and following \cite{MeDu14a}, a sharper version of Poincar\'e inequality may be considered by establishing compactness results analogous to those in \cite{BBM01}. 
Last but not least, it remains of great interest to develop nonlocal exterior calculus and geometric structures that connect the corresponding discrete theories and continuous local theories \cite{Arno18,bartholdi2012hodge,hirani2003discrete,le2013nonlocal}. 

\section*{Acknowledgements}
This research was supported in part by NSF grants DMS-2111608 and DMS-2240180. The authors thank Qiang Du, Tadele Mengesha, and James Scott for their helpful discussions. The authors would also like to thank the anonymous reviewers for their valuable comments and suggestions. 
\appendix
\section{}

\begin{proof}[Proof of \Cref{lem:opforW1pfct}]
    Let $u\in W^{1,p}(\R^d)$.
    To use Lebesgue dominated convergence theorem to show the principal value integral coincide with the usual Lebesgue integral, we construct the majorizing function \[
    g_{\xb}(\yb):=|u(\yb)-u(\xb)|w(\yb-\xb),\quad \yb\in\R^d,
    \]
    and show that $g_{\xb}\in L^1(\R^d)$ for a.e. $\xb\in\R^d$. This follows from the fact that the function $\xb\mapsto \int_{\R^d} g_{\xb}(\yb)d\yb\in L^p(\R^d)$. When $p=\infty$, this is obvious. To show this fact for $1\le p<\infty$, first note that\[
    \begin{split}
        \int_{\R^d}\left|\int_{\R^d} g_{\xb}(\yb)d\yb\right|^pd\xb&=\int_{\R^d}\left|\int_{|\yb-\xb|<1} g_{\xb}(\yb)d\yb+\int_{|\yb-\xb|>1} g_{\xb}(\yb)d\yb\right|^pd\xb\\
        \le 2^{p-1} \int_{\R^d}&\left(\int_{|\yb-\xb|<1} g_{\xb}(\yb)d\yb\right)^p d\xb+\int_{\R^d}\left(\int_{|\yb-\xb|>1} g_{\xb}(\yb)d\yb\right)^p d\xb .
    \end{split}
    \]
    Then by H\"older's inequality,
    \[
    \begin{split}
        &\int_{\R^d}\left(\int_{|\yb-\xb|<1} g_{\xb}(\yb)d\yb\right)^p d\xb\\
        =&\int_{\R^d} \left(\int_{|\zb|<1} \frac{|u(\xb+\zb)-u(\xb)|}{|\zb|}|\zb|w(\zb)d\zb\right)^p d\xb\\
        \le &\left(\int_{|\zb|<1} |\zb|w(\zb)d\zb\right)^{p-1}\int_{\R^d}\int_{|\zb|<1} |\zb|w(\zb)\frac{|u(\xb+\zb)-u(\xb)|^p}{|\zb|^p}d\zb d\xb\\
        \le & (M_w^1)^p\|\nabla u\|_{L^p(\R^d)}^p,
    \end{split}
    \]
    where we used inequality (see Proposition 9.3 in \cite{Brez11}) \[\int_{\R^d} \frac{|u(\xb+\zb)-u(\xb)|^p}{|\zb|^p} d\xb\le \|\nabla u\|_{L^p(\R^d)}^p,\quad \zb\in\R^d\backslash \{\bm 0\}.\]
    Applying the same techniques it follows that 
    \[
    \begin{split}
        &\int_{\R^d}\left(\int_{|\yb-\xb|>1} g_{\xb}(\yb)d\yb\right)^p d\xb\\
        \le & 2^{p-1}\int_{\R^d} \left(\int_{|\yb-\xb|>1}|u(\yb)|w(\yb-\xb)d\yb\right)^p + \left(\int_{|\yb-\xb|>1}|u(\xb)|w(\yb-\xb)d\yb\right)^p d\xb\\
        \le & 2^p (M_w^2)^p \|u\|_{L^p(\R^d)}^p.
    \end{split}
    \]
    Combining the above estimates, there exists a constant $C>0$ depending on $p$ such that 
    \begin{equation}\label{estmajorfct}
        \left(\int_{\R^d}\left|\int_{\R^d} g_{\xb}(\yb)d\yb\right|^pd\xb\right)^{\frac{1}{p}}\le C\left(M_w^1 \|\nabla u\|_{L^p(\R^d)}+M_w^2 \|u\|_{L^p(\R^d)}\right),\quad u\in W^{1,p}(\R^d).
    \end{equation}
    Therefore, $g_{\xb}\in L^1(\R^d)$ for a.e. $\xb\in\R^d$ and by Lebesgue dominated convergence theorem, equalities \eqref{Gwithoutpv} hold for a.e. $x\in\R^d$. Since $|\mathcal{G}^{\nub}_w u(\xb)|\le \int_{\R^d} g_{\xb}(\yb)d\yb$, the estimate \eqref{LpestgradW1p} follows from \eqref{estmajorfct}. Similar proofs hold for $\mathcal{D}^{\nub}_w$ and $\mathcal{C}^{\nub}_w$ and are omitted.
\end{proof}

\begin{proof}[Proof of \Cref{prop:hbibp}]
\begin{enumerate}
    \item Since $w(\xb-\yb)\left|\ub(\xb)-\ub(\yb)\right|\in L^1(\R^d\times \R^d)$ and $\vb\in C^1_c(\R^d;\R^{d\times N})$, one can show by Lebesgue dominated convergence theorem that
\begin{equation}\label{DCT1}
    \int_{\R^d} \mathcal{G}^{\nub}_w u(\xb):\vb(\xb)d\xb=\lim_{\epsilon\to 0} \iint_{\R^{2d}_\epsilon}\chi_{\nub}(\yb-\xb)w(\yb-\xb)\frac{\yb-\xb}{|\yb-\xb|}\otimes(\ub(\yb)-\ub(\xb)): \vb(\xb)d\yb d\xb,
\end{equation}
where $\R^{2d}_\epsilon:=\R^d\times \R^d\backslash\{(\xb,\yb)\in \R^{2d}:|\xb-\yb|\le \epsilon\}$. Similarly, 
\begin{align*}
  &  -\int_{\R^d} \ub(\xb)\cdot\mathcal{D}^{-\nub}_w \vb (\xb)d\xb=-\int_{\R^d}\ub(\xb) \cdot\lim_{\epsilon\to 0}g_\epsilon(\xb)d\xb\\
    =&-\lim_{\epsilon\to 0}\iint_{\R^{2d}_\epsilon}\ub(\xb)\cdot \left[\frac{\yb^T-\xb^T}{|\yb-\xb|}(\chi_{\nub}(\xb-\yb)\vb(\yb)+\chi_{\nub}(\yb-\xb)\vb(\xb))\right]^T w(\yb-\xb)d\yb d\xb,
\end{align*}
where 
\begin{align*}
    g_\epsilon(\xb): &=\int_{\R^d\backslash B_\epsilon(\xb)}\left[\frac{\yb^T-\xb^T}{|\yb-\xb|}(\chi_{\nub}(\xb-\yb)(\vb(\yb)-\vb(\xb)))\right]^T w(\yb-\xb)d\yb,\\
    &=\int_{\R^d\backslash B_\epsilon(\xb)}\left[\frac{\yb^T-\xb^T}{|\yb-\xb|}(\chi_{\nub}(\xb-\yb)\vb(\yb)+\chi_{\nub}(\yb-\xb)\vb(\xb))\right]^T w(\yb-\xb)d\yb \\
     & \qquad -  \int_{\R^d\backslash B_\epsilon(\xb)}\left[\frac{\yb^T-\xb^T}{|\yb-\xb|}\vb(\xb)\right]^T w(\yb-\xb)d\yb \\
     &= \int_{\R^d\backslash B_\epsilon(\xb)}\left[\frac{\yb^T-\xb^T}{|\yb-\xb|}(\chi_{\nub}(\xb-\yb)\vb(\yb)+\chi_{\nub}(\yb-\xb)\vb(\xb))\right]^T w(\yb-\xb)d\yb
\end{align*}
where we have used $\chi_{\nub}(\xb-\yb)+\chi_{\nub}(\yb-\xb)=1$. The change of order of limitation and integration is again justified by Lebesgue dominated convergence theorem due to $\ub\in L^1(\R^d;\R^N)$ and $\vb\in C_c^1(\R^d;\R^{d\times N})$. 

Therefore, it suffices to prove that
\begin{align*}
    &\quad \iint_{\R^{2d}_\epsilon}\chi_{\nub}(\yb-\xb)w(\yb-\xb)\frac{\yb-\xb}{|\yb-\xb|}\otimes(\ub(\yb)-\ub(\xb)): \vb(\xb)d\yb d\xb  \\
    &=-\iint_{\R^{2d}_\epsilon}\ub(\xb)\cdot \left[\frac{\yb^T-\xb^T}{|\yb-\xb|}(\chi_{\nub}(\xb-\yb)\vb(\yb)+\chi_{\nub}(\yb-\xb)\vb(\xb))\right]^T w(\yb-\xb)d\yb d\xb .
\end{align*}
Applying Fubini's theorem completes the proof as
\begin{align*}
    &\quad \iint_{\R^{2d}_\epsilon}\chi_{\nub}(\yb-\xb)w(\yb-\xb)\frac{\yb-\xb}{|\yb-\xb|}\otimes(\ub(\yb)-\ub(\xb)): \vb(\xb)d\yb d\xb  \\
    &=\iint_{\R^{2d}_\epsilon}\chi_{\nub}(\yb -\xb  )w(\yb -\xb  )\frac{\yb -\xb  }{|\yb -\xb  |}\otimes \ub(\yb): \vb (\xb  )d\yb d\xb  \\
    &\quad-\iint_{\R^{2d}_\epsilon}\chi_{\nub}(\yb -\xb  )w(\yb -\xb  )\frac{\yb -\xb  }{|\yb -\xb  |}\otimes \ub(\xb):  \vb (\xb  )d\yb d\xb  \\
    &=-\iint_{\R^{2d}_\epsilon}\chi_{\nub}(\xb -\yb  )w(\yb -\xb  )\frac{\yb -\xb  }{|\yb -\xb  |}\otimes \ub(\xb): \vb (\yb  )d\yb  d\xb\\
    &\quad-\iint_{\R^{2d}_\epsilon}\chi_{\nub}(\yb -\xb  )w(\yb -\xb  )\frac{\yb -\xb  }{|\yb -\xb  |}\otimes \ub(\xb):  \vb (\xb  )d\yb  d\xb  \\
    &=-\iint_{\R^{2d}_\epsilon}\ub(\xb)\cdot \left[\frac{\yb^T-\xb^T}{|\yb-\xb|}(\chi_{\nub}(\xb-\yb)\vb(\yb)+\chi_{\nub}(\yb-\xb)\vb(\xb))\right]^T w(\yb-\xb)d\yb  d\xb  .
\end{align*}
\item Since $w(\xb-\yb)|\ub(\xb)-\ub(\yb)|\in L^1(\R^d\times\R^d)$ and $\vb\in C^1_c(\R^d;\R^N)$, by Lebesgue dominated convergence theorem one can show that
\begin{equation}\label{DCT2}
\begin{split}
    &\int_{\R^d} \mathcal{D}^{\nub}_w \ub(\xb)  \cdot \vb(\xb)d\xb \\
    =&\lim_{\epsilon\to 0}\iint_{\R^{2d}_\epsilon}\chi_{\nub}(\yb-\xb)w(\yb-\xb)\left[\frac{\yb^T-\xb^T}{|\yb-\xb|}(\ub(\yb)-\ub(\xb))\right]^T  \cdot \vb(\xb)d\yb d\xb,
 \end{split}   
\end{equation}
where $\R^{2d}_\epsilon:=\R^d\times\R^d\backslash\{(\xb,\yb)\in \R^{2d}:|\xb-\yb|\le \epsilon\}$. Similarly, by the same reasoning as in the proof of \Cref{prop:hbibp}(1),
\begin{align*}
    &-\int_{\R^d} \ub(\xb):\mathcal{G}^{\nub}_w \vb(\xb)d\xb\\
    =&-\lim_{\epsilon\to 0}\iint_{\R^{2d}_\epsilon} \chi_{\nub}(\yb-\xb)w(\yb-\xb)\ub(\xb): \frac{\yb-\xb}{|\yb-\xb|}\otimes(\vb(\yb)-\vb(\xb))d\yb d\xb.
\end{align*}
Therefore, it suffices to prove that 
\begin{align*}
    & \iint_{\R^{2d}_\epsilon}\chi_{\nub}(\yb-\xb)w(\yb-\xb)\left[\frac{\yb^T-\xb^T}{|\yb-\xb|}(\ub(\yb)-\ub(\xb))\right]^T  \cdot \vb(\xb)d\yb d\xb\\
    =& -\iint_{\R^{2d}_\epsilon} \chi_{\nub}(\yb-\xb)w(\yb-\xb)\ub(\xb): \frac{\yb-\xb}{|\yb-\xb|}\otimes(\vb(\yb)-\vb(\xb))d\yb d\xb.
\end{align*}
Applying Fubini's theorem as in the proof of \Cref{prop:hbibp}(1) gives the desired result.
\item By similar reasoning as the proof of \Cref{prop:hbibp}(1) and \Cref{prop:hbibp}(2), we have 
\[
\int_{\R^d} \mathcal{C}^{\nub}_w \ub(\xb)  \cdot \vb(\xb)d\xb = \lim_{\epsilon\to 0} \iint_{\R^{2d}_\epsilon} 
\chi_{\nub}(\yb-\xb)w(\yb-\xb)\frac{\yb-\xb}{|\yb-\xb|}\times(\ub(\yb)-\ub(\xb))\cdot \vb(\xb) d\yb d\xb, 
\]
and 
\[
\int_{\R^d}  \ub(\xb)  \cdot \mathcal{C}^{-\nub}_w\vb(\xb)d\xb = \lim_{\epsilon\to 0} \iint_{\R^{2d}_\epsilon} 
\chi_{\nub}(\xb-\yb)w(\yb-\xb)\frac{\yb-\xb}{|\yb-\xb|}\times(\vb(\yb)-\vb(\xb))\cdot \ub(\xb) d\yb d\xb. 
\]
Using Fubini's theorem and the two identities $\chi_{\nub}(\xb-\yb)+\chi_{\nub}(\yb-\xb)=1$ and $\ab \cdot (\bb\times \cb) = -\cb \cdot (\bb\times \ab)$, one can show
\[
\begin{split}
&\iint_{\R^{2d}_\epsilon} 
\chi_{\nub}(\yb-\xb)w(\yb-\xb)\frac{\yb-\xb}{|\yb-\xb|}\times(\ub(\yb)-\ub(\xb))\cdot \vb(\xb) d\yb d\xb \\
=&
\iint_{\R^{2d}_\epsilon} 
\chi_{\nub}(\xb-\yb)w(\yb-\xb)\frac{\yb-\xb}{|\yb-\xb|}\times(\vb(\yb)-\vb(\xb))\cdot \ub(\xb) d\yb d\xb,
\end{split}
\]
for any $\ep>0$, and therefore the desired result is implied.
\end{enumerate}

\end{proof}

\begin{proof}[Proof of \Cref{lem:contitrans}]
Note that $\tau_{\ab} u \in L^2(\R^d)$ is obvious. 
To show $\tau_{\ab} u \in \cS_w^{\nub}(\R^d;\R^d)$, it suffices to show $\mathfrak{G}^{\nub}_w (\tau_{\ab} u) \in L^2(\R^d;\R^d)$. We claim that $\mathfrak{G}^{\nub}_w (\tau_{\ab} u) =\tau_{\ab}(\mathfrak{G}^{\nub}_w u)\in L^2(\R^d;\R^d)$. Indeed, for any $\bm\phi\in C^\infty_c(\R^d;\R^d)$,
\begin{align*}
    \int_{\R^d} \tau_{\ab} u (\xb) \mathcal{D}^{-\nub}_w \bm\phi(\xb)d\xb &=\int_{\R^d} u(\xb) (\mathcal{D}^{-\nub}_w \bm\phi)(\xb -\ab)d\xb \\
    &=\int_{\R^d} u(\xb) (\tau_{-\ab}\mathcal{D}^{-\nub}_w \bm\phi)(\xb)d\xb \\
    &=\int_{\R^d} u(\xb) (\mathcal{D}^{-\nub}_w (\tau_{-\ab}\bm\phi))(\xb)d\xb \\
    &=-\int_{\R^d} \mathfrak{G}^{\nub}_w u(\xb)\cdot \tau_{-\ab}\bm\phi(\xb)d\xb \\
    &=-\int_{\R^d} \tau_{\ab}(\mathfrak{G}^{\nub}_w u)(\xb)\cdot \bm\phi(\xb)d\xb ,
\end{align*}
where $\mathcal{D}^{-\nub}_w (\tau_{-\ab}\bm\phi)=\tau_{-\ab}\mathcal{D}^{-\nub}_w \bm\phi$ can be easily checked. Therefore, the claim is true and thus $\tau_{\ab} u \in \cS_w^{\nub}(\R^d)$.

To show the continuity, first note that \[\lim_{|\ab|\to 0}\|\tau_{\ab} u -u\|_{L^2(\R^d)}=0\] by continuity of translation in $L^2(\R^d)$. Then using the claim above and the continuity of translation in $L^2(\R^d;\R^d)$, we have
\[\|\mathfrak{G}^{\nub}_w (\tau_{\ab} u) -\mathfrak{G}^{\nub}_w u\|_{L^2(\R^d;\R^d)}=\|\tau_{\ab}(\mathfrak{G}^{\nub}_w u)-\mathfrak{G}^{\nub}_w u\|_{L^2(\R^d;\R^d)}\to 0,\quad|\ab|\to 0.\]Hence, \[\lim_{|\ab|\to 0}\|\tau_{\ab} u -u\|_{\cS_w^{\nub}(\R^d)}=0.\]
\end{proof}

\begin{proof}[Proof of \Cref{lem:mollif}]
Since $u\in L^2(\R^d)$, by the property of mollification, $\eta_\epsilon*u\in L^2(\R^d)$ and \[\lim_{\epsilon\to 0}\|\eta_\epsilon*u-u\|_{L^2(\R^d)}=0.\]  We claim that \[\mathfrak{G}^{\nub}_w (\eta_\epsilon*u)=\eta_\epsilon*\mathfrak{G}^{\nub}_w u\in L^2(\R^d;\R^d).\]
To show the claim, we need to prove that 
\begin{equation}\label{eq:claiminmoll}
    \int_{\R^d} (\eta_\epsilon*\mathfrak{G}^{\nub}_w u)(\xb)\bm\phi(\xb)d\xb =-\int_{\R^d}(\eta_\epsilon*u)(\xb)\mathcal{D}^{-\nub}_w \bm\phi(\xb)d\xb ,\quad\forall\bm\phi\in C^\infty_c(\R^d;\R^d).
\end{equation}
For the right-hand side, we use Fubini's theorem to get
\begin{align*}
    -\int_{\R^d}(\eta_\epsilon*u)(\xb)\mathcal{D}^{-\nub}_w \bm\phi(\xb)d\xb &=-\int_{\R^d}\int_{\R^d} \eta_\epsilon(\xb-\yb )u(\yb)d\yb \mathcal{D}^{-\nub}_w \bm\phi(\xb)d\xb \\
    &=-\int_{\R^d} u(\yb)\int_{\R^d}\eta_\epsilon(\yb-\xb)\mathcal{D}^{-\nub}_w \bm\phi(\xb)d\xb d\yb \\
    &=-\int_{\R^d}u(\xb)(\eta_\epsilon*\mathcal{D}^{-\nub}_w \bm\phi)(\xb)d\xb ,
\end{align*}
For the left-hand side,  use Fubini's theorem again to obtain 
\begin{align*}
    \int_{\R^d} (\eta_\epsilon*\mathfrak{G}^{\nub}_w u)(\xb)\cdot\bm\phi(\xb)d\xb &=\int_{\R^d}\int_{\R^d}\eta_\epsilon(\xb-\yb )\mathfrak{G}^{\nub}_w u(\yb)d\yb \cdot\bm\phi(\xb)d\xb \\
    &=\int_{\R^d}\mathfrak{G}^{\nub}_w u(\yb)\cdot\int_{\R^d}\eta_\epsilon(\yb-\xb)\bm\phi(\xb)d\xb d\yb \\
    &=\int_{\R^d}\mathfrak{G}^{\nub}_w u(\yb)\cdot(\eta_\epsilon*\bm\phi)(\yb)d\yb \\
    &=-\int_{\R^d} u(\yb)\mathcal{D}^{-\nub}_w(\eta_\epsilon*\bm\phi)(\yb)d\yb . 
\end{align*}
One can check that 
$\mathcal{D}^{-\nub}_w (\eta_\epsilon*\bm\phi)(\xb)=(\eta_\epsilon*\mathcal{D}^{-\nub}_w \bm\phi)(\xb)$ and therefore 
\[
  \int_{\R^d} (\eta_\epsilon*\mathfrak{G}^{\nub}_w u)(\xb)\bm\phi(\xb)d\xb  =-\int_{\R^d} u(\yb)(\eta_\epsilon*\mathcal{D}^{-\nub}_w\bm\phi)(\yb)d\yb ,
\] 
Comparing the left-hand and right-hand side, \cref{eq:claiminmoll} is proved and $\mathfrak{G}^{\nub}_w (\eta_\epsilon*u)=\eta_\epsilon*\mathfrak{G}^{\nub}_w u\in L^2(\R^d;\R^d) $. Therefore
\[\lim_{\epsilon\to 0}\|\mathfrak{G}^{\nub}_w (\eta_\epsilon*u)-\mathfrak{G}^{\nub}_w u\|_{L^2(\R^d;\R^d)}=\lim_{\epsilon\to 0}\|\eta_\epsilon*\mathfrak{G}^{\nub}_w u-\mathfrak{G}^{\nub}_w u\|_{L^2(\R^d;\R^d)}= 0,\] 
and thus the lemma is proved.

\end{proof}

\bibliographystyle{abbrv} 

\bibliography{ref}

\begin{thebibliography}{10}

\bibitem{AdFo03}
R.~A. Adams and J.~F. Fournier.
\newblock {\em Sobolev Spaces}.
\newblock Academic Press, San Diego, CA, second edition, 2003.

\bibitem{allen2016parabolic}
M.~Allen, L.~Caffarelli, and A.~Vasseur.
\newblock A parabolic problem with a fractional time derivative.
\newblock {\em Archive for Rational Mechanics and Analysis}, 221(2):603--630,
  2016.

\bibitem{Arno18}
D.~Arnold.
\newblock {\em Finite Element Exterior Calculus}.
\newblock Number~93 in CBMS-NSF Regional Conference Series in Applied
  Mathematics. SIAM, Philadelphia, PA, 2018.

\bibitem{askari2008peridynamics}
E.~Askari, F.~Bobaru, R.~Lehoucq, M.~Parks, S.~Silling, and O.~Weckner.
\newblock Peridynamics for multiscale materials modeling.
\newblock In {\em Journal of Physics: {{Conference}} Series}, volume 125, page
  012078. {IOP Publishing}, 2008.

\bibitem{bakunin2008turbulence}
O.~G. Bakunin.
\newblock {\em Turbulence and Diffusion: Scaling versus Equations}.
\newblock {Springer Science \& Business Media}, 2008.

\bibitem{BALL1984225}
J.~Ball and F.~Murat.
\newblock ${W}^{1,p}$-quasiconvexity and variational problems for multiple
  integrals.
\newblock {\em Journal of Functional Analysis}, 58(3):225--253, 1984.

\bibitem{bartholdi2012hodge}
L.~Bartholdi, T.~Schick, N.~Smale, and S.~Smale.
\newblock Hodge theory on metric spaces.
\newblock {\em Foundations of Computational Mathematics}, 12(1):1--48, 2012.

\bibitem{bellido2023nonlocal}
J.~C. Bellido, J.~Cueto, and C.~{Mora-Corral}.
\newblock Non-local gradients in bounded domains motivated by continuum
  mechanics: {{Fundamental}} theorem of calculus and embeddings.
\newblock {\em Advances in Nonlinear Analysis}, 12(1), 2023.

\bibitem{benson2000application}
D.~A. Benson, S.~W. Wheatcraft, and M.~M. Meerschaert.
\newblock Application of a fractional advection-dispersion equation.
\newblock {\em Water resources research}, 36(6):1403--1412, 2000.

\bibitem{BBM01}
J.~Bourgain, H.~Brezis, and P.~Mironescu.
\newblock Another look at {{Sobolev}} spaces.
\newblock In {\em Optimal Control and Partial Differential Equation.
  {{Conference}}, {{Paris}} , {{FRANCE}} (04/12/2000)}, pages 439--455. {IOS
  Press, Amsterdam}, 2001.

\bibitem{Brez11}
H.~Brezis.
\newblock {\em Functional Analysis, Sobolev Spaces and Partial Differential
  Equations}.
\newblock Universitext. Springer, New York, NY, 2011.

\bibitem{Stefani20}
E.~Bru{\'e}, M.~Calzi, G.~E. Comi, and G.~Stefani.
\newblock A distributional approach to fractional {{Sobolev}} spaces and
  fractional variation: Asymptotics {{II}}.
\newblock {\em Comptes Rendus. Math\'ematique}, 360(G6):589--626, 2022.

\bibitem{buades2010image}
A.~Buades, B.~Coll, and J.-M. Morel.
\newblock Image denoising methods. {{A}} new nonlocal principle.
\newblock {\em SIAM review}, 52(1):113--147, 2010.

\bibitem{d2017nonlocal}
M.~D'Elia, Q.~Du, M.~Gunzburger, and R.~Lehoucq.
\newblock Nonlocal convection-diffusion problems on bounded domains and
  finite-range jump processes.
\newblock {\em Computational Methods in Applied Mathematics}, 17(4):707--722,
  2017.

\bibitem{d2022connections}
M.~D'Elia, M.~Gulian, T.~Mengesha, and J.~M. Scott.
\newblock Connections between nonlocal operators: From vector calculus
  identities to a fractional {{Helmholtz}} decomposition.
\newblock {\em Fractional Calculus and Applied Analysis}, pages 1--44, 2022.

\bibitem{d2021towards}
M.~D'Elia, M.~Gulian, H.~Olson, and G.~E. Karniadakis.
\newblock Towards a unified theory of fractional and nonlocal vector calculus.
\newblock {\em Fractional Calculus and Applied Analysis}, 24(5):1301--1355,
  2021.

\bibitem{DFHT21}
L.~Demkowicz, T.~F{\"u}hrer, N.~Heuer, and X.~Tian.
\newblock The double adaptivity paradigm:({{How}} to circumvent the discrete
  inf\textendash sup conditions of {{Babu\v{s}ka}} and {{Brezzi}}).
\newblock {\em Computers \& Mathematics with Applications}, 95:41--66, 2021.

\bibitem{Du2019}
Q.~Du.
\newblock {\em Nonlocal Modeling, Analysis, and Computation}, volume~94.
\newblock {SIAM}, 2019.

\bibitem{DGLZ12}
Q.~Du, M.~Gunzburger, R.~B. Lehoucq, and K.~Zhou.
\newblock Analysis and approximation of nonlocal diffusion problems with volume
  constraints.
\newblock {\em SIAM review}, 54(4):667--696, 2012.

\bibitem{DGLZ13a}
Q.~Du, M.~Gunzburger, R.~B. Lehoucq, and K.~Zhou.
\newblock A nonlocal vector calculus, nonlocal volume-constrained problems, and
  nonlocal balance laws.
\newblock {\em Mathematical Models and Methods in Applied Sciences},
  23(03):493--540, 2013.

\bibitem{du2018stability}
Q.~Du and X.~Tian.
\newblock Stability of nonlocal dirichlet integrals and implications for
  peridynamic correspondence material modeling.
\newblock {\em SIAM Journal on Applied Mathematics}, 78(3):1536--1552, 2018.

\bibitem{DuTi20}
Q.~Du and X.~Tian.
\newblock Mathematics of smoothed particle hydrodynamics: {{A}} study via
  nonlocal {{Stokes}} equations.
\newblock {\em Foundations of Computational Mathematics}, 20:801--826, 2020.

\bibitem{du2022nonlocal}
Q.~Du, X.~Tian, and Z.~Zhou.
\newblock Nonlocal diffusion models with consistent local and fractional
  limits.
\newblock {\em arXiv preprint arXiv:2203.00167}, 2022.

\bibitem{Evans2015}
L.~Evans and R.~Gariepy.
\newblock {\em Measure Theory and Fine Properties of Functions, Revised
  Edition}.
\newblock Chapman and Hall/CRC, 2015.

\bibitem{fiscella2015density}
A.~Fiscella, R.~Servadei, and E.~Valdinoci.
\newblock Density properties for fractional {{Sobolev}} spaces.
\newblock {\em Ann. Acad. Sci. Fenn. Math}, 40(1):235--253, 2015.

\bibitem{fuentes2003nonlocal}
M.~Fuentes, M.~Kuperman, and V.~Kenkre.
\newblock Nonlocal interaction effects on pattern formation in population
  dynamics.
\newblock {\em Physical review letters}, 91(15):158104, 2003.

\bibitem{Gounoue20}
G.~F.~F. Gounoue.
\newblock {\em {{L}}{$^2$}-{{Theory}} for Nonlocal Operators on Domains}.
\newblock PhD thesis, Bielefeld university, 2020.

\bibitem{grafakos2014classical}
L.~Grafakos.
\newblock {\em Classical Fourier Analysis}.
\newblock Graduate Texts in Mathematics. Springer New York, 2014.

\bibitem{gunzburger2010nonlocal}
M.~Gunzburger and R.~B. Lehoucq.
\newblock A nonlocal vector calculus with application to nonlocal boundary
  value problems.
\newblock {\em Multiscale Modeling \& Simulation}, 8(5):1581--1598, 2010.

\bibitem{HaBo11}
Y.~D. Ha and F.~Bobaru.
\newblock Characteristics of dynamic brittle fracture captured with
  peridynamics.
\newblock {\em Engineering Fracture Mechanics}, 78(6):1156--1168, 2011.

\bibitem{haar2022new}
A.~Haar and P.~Radu.
\newblock A new nonlocal calculus framework. {{Helmholtz}} decompositions,
  properties, and convergence for nonlocal operators in the limit of the
  vanishing horizon.
\newblock {\em Partial Differential Equations and Applications}, 3(3):1--20,
  2022.

\bibitem{hirani2003discrete}
A.~N. Hirani.
\newblock {\em Discrete Exterior Calculus}.
\newblock PhD thesis, California Institute of Technology, 2003.

\bibitem{kassmann2017intrinsic}
M.~Kassmann and A.~Mimica.
\newblock Intrinsic scaling properties for nonlocal operators.
\newblock {\em Journal of the European Mathematical Society}, 19(4):983--1011,
  2017.

\bibitem{kilbas1993fractional}
A.~A. Kilbas, O.~Marichev, and S.~Samko.
\newblock Fractional {{Integrals}} and {{Derivatives}}: {{Theory}} and
  {{Applications}}, 1993.

\bibitem{klafter2005anomalous}
J.~Klafter and I.~M. Sokolov.
\newblock Anomalous diffusion spreads its wings.
\newblock {\em Physics world}, 18(8):29, 2005.

\bibitem{le2013nonlocal}
T.~D. Le.
\newblock {\em Nonlocal Exterior Calculus on {{Riemannian}} Manifolds}.
\newblock PhD thesis, Pennsylvania State University, 2013.

\bibitem{lee2019asymptotically}
H.~Lee and Q.~Du.
\newblock Asymptotically compatible {{SPH-like}} particle discretizations of
  one dimensional linear advection models.
\newblock {\em SIAM Journal on Numerical Analysis}, 57(1):127--147, 2019.

\bibitem{lee2020nonlocal}
H.~Lee and Q.~Du.
\newblock Nonlocal gradient operators with a nonspherical interaction
  neighborhood and their applications.
\newblock {\em ESAIM: Mathematical Modelling and Numerical Analysis},
  54(1):105--128, 2020.

\bibitem{leng2022petrov}
Y.~Leng, X.~Tian, L.~Demkowicz, H.~Gomez, and J.~T. Foster.
\newblock A {{Petrov-Galerkin}} method for nonlocal convection-dominated
  diffusion problems.
\newblock {\em Journal of Computational Physics}, 452:110919, 2022.

\bibitem{meerschaert2006fractional}
M.~M. Meerschaert, J.~Mortensen, and S.~W. Wheatcraft.
\newblock Fractional vector calculus for fractional advection\textendash
  dispersion.
\newblock {\em Physica A: Statistical Mechanics and its Applications},
  367:181--190, 2006.

\bibitem{MeDu14a}
T.~Mengesha and Q.~Du.
\newblock The bond-based peridynamic system with {{Dirichlet-type}} volume
  constraint.
\newblock {\em Proceedings of the Royal Society of Edinburgh. Section A.
  Mathematics}, 144(1):161--186, 2014.

\bibitem{MeDu14b}
T.~Mengesha and Q.~Du.
\newblock Nonlocal constrained value problems for a linear peridynamic
  {{Navier}} equation.
\newblock {\em Journal of Elasticity}, 116(1):27--51, 2014.

\bibitem{mengesha2016characterization}
T.~Mengesha and Q.~Du.
\newblock Characterization of function spaces of vector fields and an
  application in nonlinear peridynamics.
\newblock {\em Nonlinear Analysis}, 140:82--111, 2016.

\bibitem{sabzikar2015tempered}
F.~Sabzikar, M.~M. Meerschaert, and J.~Chen.
\newblock Tempered fractional calculus.
\newblock {\em Journal of Computational Physics}, 293:14--28, 2015.

\bibitem{scalas2000fractional}
E.~Scalas, R.~Gorenflo, and F.~Mainardi.
\newblock Fractional calculus and continuous-time finance.
\newblock {\em Physica A: Statistical Mechanics and its Applications},
  284(1-4):376--384, 2000.

\bibitem{shieh2015new}
T.-T. Shieh and D.~E. Spector.
\newblock On a new class of fractional partial differential equations.
\newblock {\em Advances in Calculus of Variations}, 8(4):321--336, 2015.

\bibitem{shieh2018new}
T.-T. Shieh and D.~E. Spector.
\newblock On a new class of fractional partial differential equations {{II}}.
\newblock {\em Advances in Calculus of Variations}, 11(3):289--307, 2018.

\bibitem{Silling00}
S.~A. Silling.
\newblock Reformulation of elasticity theory for discontinuities and long-range
  forces.
\newblock {\em Journal of the Mechanics and Physics of Solids}, 48(1):175--209,
  2000.

\bibitem{silling2017stability}
S.~A. Silling.
\newblock Stability of peridynamic correspondence material models and their
  particle discretizations.
\newblock {\em Computer Methods in Applied Mechanics and Engineering},
  322:42--57, 2017.

\bibitem{SEWX07}
S.~A. Silling, M.~Epton, O.~Weckner, J.~Xu, and E.~Askari.
\newblock Peridynamic states and constitutive modeling.
\newblock {\em Journal of Elasticity}, 88(2):151--184, 2007.

\bibitem{stinga2020one}
P.~R. Stinga and M.~Vaughan.
\newblock One-sided fractional derivatives, fractional laplacians, and weighted
  sobolev spaces.
\newblock {\em Nonlinear Analysis}, 193:111505, 2020.

\bibitem{tian2015nonlocal}
H.~Tian, L.~Ju, and Q.~Du.
\newblock Nonlocal convection\textendash diffusion problems and finite element
  approximations.
\newblock {\em Computer Methods in Applied Mechanics and Engineering},
  289:60--78, 2015.

\bibitem{tian2017conservative}
H.~Tian, L.~Ju, and Q.~Du.
\newblock A conservative nonlocal convection\textendash diffusion model and
  asymptotically compatible finite difference discretization.
\newblock {\em Computer Methods in Applied Mechanics and Engineering},
  320:46--67, 2017.

\end{thebibliography}

 \end{document}